\newtheorem{thm}{Theorem}[section]
\newtheorem{lem}[thm]{Lemma}
\theoremstyle{definition}
\newtheorem{defn}{Definition}[section]
\numberwithin{equation}{section}
\begin{document}


\title[ZHANG AND LI] 
{Boundedness in a two-dimensional doubly degenerate nutrient taxis system}
\author[Zhang]{Zhang Zhiguang}%
\address{School of Mathematics, Southeast University, Nanjing 211189, P. R. China}
\address{School of Mathematics and Statistics, Chongqing Three Gorges University, Wanzhou 404020 , P. R. China}
\email{guangz$\_$z@163.com}

\author[Li]{Li Yuxiang}
\address{School of Mathematics, Southeast University, Nanjing 211189, P. R. China}
\email{lieyx@seu.edu.cn}

\thanks{Supported in part by National Natural Science Foundation of China (No. 12271092, No. 11671079) and the  Science and Technology Research Program of Chongqing Municipal Education Commission (No. KJQN202201226).}

\subjclass[2020]{35K55, 35K67, 35A01, 35Q92, 92C17.}%

\keywords{Chemotaxis system, doubly degenerate, global weak existence.}

\begin{abstract}
In this work, we study the no-flux initial-boundary value problem for the doubly degenerate nutrient taxis system 
\begin{align}
\begin{cases}\tag{$\star$}\label{eq 0.1} 
u_t=\nabla \cdot(u v \nabla u)-\chi \nabla \cdot\left(u^{2} v \nabla v\right)+\ell u v, & x \in \Omega, t>0, \\ v_t=\Delta v-u v, & x \in \Omega, t>0  
\end{cases}
\end{align} 
in a smoothly bounded convex domain $\Omega \subset \mathbb{R}^2$, where $\chi>0$ and $\ell \geq 0$. In this paper, we present that for all reasonably regular initial data, the model \eqref{eq 0.1} possesses a global bounded weak solution which is continuous in its first and essentially smooth in its second component.
\end{abstract}
\maketitle

\section{Introduction and main results}\label{section1}

Patterns can be widely observed in nature; for example, bacterial colonies can manifest a broad spectrum of morphological aggregation patterns in response to adverse environmental conditions. Notably, species such as Bacillus subtilis are recognized for their capacity to exhibit diverse patterns, which depend on the softness of the medium, nutrient concentration, and temperature. The experimental results documented by Fujikawa \cite{1992-PASMaiA-Fujikawa}, Fujikawa and Matsushita \cite{1989-JotPSoJ-FujikawaMatsushita}, and Matsushita and Fujikawa \cite{1990-PASMaiA-MatsushitaFujikawa} provided compelling evidence of the nuanced collective dynamics exhibited by Bacillus subtilis populations under conditions of nutrient scarcity. Particularly noteworthy is the propensity for these populations to undergo the formation of intricate patterns, including the emergence of snowflake-like
population distributions. Such phenomena are not sporadic occurrences but rather recurrent features observed within nutrient-poor environments, suggesting a generic trait inherent to the system dynamics. 

Following experimental evidence indicating limitations in bacterial motility near regions of low nutrient concentration, as a mathematical description for such processes, Kawasaki et al., \cite{1997-JoTB-Kawa} proposed the doubly degenerate parabolic system
\begin{equation}\label{-A2}
\begin{cases}u_t=\nabla \cdot (D_u(u,v) \nabla u)+ \ell u v, & x \in \Omega, t>0, \\ v_t=D_v \Delta v-  u v, & x \in \Omega, t>0,\end{cases}
\end{equation}
where $D_u(u,v)$ is the diffusion coefficient of the bacterial cells depending on $u$ and $v$, and $D_v$ is the constant diffusion coefficient of the nutrients, $\ell \geq 0$. They proposed the simplest one as $D_u(u,v)=uv$. To describe the formation of the said aggregation pattern with more accuracy, Leyva, M\'{a}laga, and Plaza \cite{2013-PA-LeyvaMalagaPlaza} extended the above degenerate diffusion model to the following doubly degenerate nutrient taxis system 
\begin{align}\label{-A1-1}
\begin{cases}u_t=\nabla \cdot(u v \nabla u)-\chi \nabla \cdot\left(u^{2}v \nabla v\right)+ \ell u v, & x \in \Omega, t>0, \\ v_t=\Delta v- u v, & x \in \Omega, t>0,  \end{cases}
\end{align}
where $\chi>0$ and $\ell \geq 0$. In the two-dimensional setting, numerical simulations in \cite{1997-JoTB-Kawa, 2000-AiP-Ben-JacobCohenLevine, 2013-PA-LeyvaMalagaPlaza} indicated that, depending on the initial data and parameter conditions, the model \eqref{-A1-1} could generate rich branching pattern, which is very close to that observed in biological experiments.

Recently, Winkler \cite{2022-CVPDE-Winklera} first considered a special from of \eqref{-A1-1}, namely,
\begin{align}\label{-A1-2}
\begin{cases}u_t=\nabla \cdot(u v \nabla u)+ \ell u v, & x \in \Omega, t>0, \\ v_t=\Delta v- u v, & x \in \Omega, t>0,  \end{cases}
\end{align}
in a smoothly bounded convex domain $\Omega \subset \mathbb{R}^n$, where $n \geq 1$ and $\ell \geq 0$, and proved the global existence of weak solutions to the associated initial-boundary value problem under some assumptions on initial data. In \cite{2021-TAMS-Winkler}, Winkler considered one-dimensional doubly degenerate cross-diffusion system 
\begin{align}\label{-A1-3}
\begin{cases}u_t=\left(u v u_x\right)_x-\left(u^2 v v_x\right)_x+u v, \\
v_t=v_{x x}-u v, \end{cases}
\end{align}
where the initial data in \eqref{-A1-3} is assumed to be such that 
\begin{align*}
\begin{cases}u_0 \in C^{\vartheta}(\bar{\Omega}) \text { for some } \vartheta \in(0,1), \text { with } u_0 \geq 0 \text { and } \int_{\Omega} \ln u_0>-\infty, \quad \text { and that } \\ v_0 \in W^{1, \infty}(\Omega) \text { satisfies } v_0>0 \text { in } \bar{\Omega} \text {. }\end{cases}
\end{align*}
Based on the method of energy estimates, he demonstrated the global weak solution to the initial-boundary value problem of \eqref{-A1-3}. Furthermore, the solution converges to equilibrium within a defined topological space. Later, Li and Winkler \cite{2022-CPAA-LiWinklera} showed the existence of global weak solution $(u,v)$ to system \eqref{-A1-3}, and there exists $u_{\infty} \in C^0(\bar{\Omega})$ such that
$u(\cdot, t) \rightarrow u_{\infty}$ in $L^{\infty}(\Omega)$ and $v(\cdot, t)\rightarrow 0$ in $L^{\infty}(\Omega)$ as $t \rightarrow \infty$. In two-dimensional space, Winkler \cite{2022-NARWA-Winkler} proved that the model \eqref{-A1-1} has the global weak solution for appropriately small initial data. In the higher dimensional setting, for a variant of  \eqref{-A1-1}
\begin{align*}
\begin{cases}u_t=\nabla \cdot(u v \nabla u)-\chi \nabla \cdot\left(u^\alpha v \nabla v\right)+\ell u v, & x \in \Omega, t>0, \\ v_t=\Delta v-u v, & x \in \Omega, t>0,\end{cases}
\end{align*}
in a smoothly bounded convex domain $\Omega \subset \mathbb{R}^n$, with zero-flux boundary conditions, where $\alpha>0$, $\chi>0$ and $\ell \geq 0$, it is shown in \cite{2022-JDE-Li}, the system admits a global weak solution when either $\alpha \in \left(1, \frac{3}{2}\right)$ and $n=2$ or $\alpha \in \left(\frac{7}{6}, \frac{13}{9}\right)$ and $n=3$. In a recent paper \cite{2024-JDE-Winkler}, a more general variant of \eqref{-A1-1}
\begin{align*}
\begin{cases}u_t=\nabla \cdot(u v \nabla u)-\nabla \cdot(S(u) v \nabla v)+\ell u v, & x \in \Omega, t>0, \\ v_t=\Delta v-u v, & x \in \Omega, t>0,\end{cases}
\end{align*} 
was studied, and the global existence of bounded weak solutions was proven in the two-dimensional case when either $S \in C^1([0, \infty))$ satisfies $\limsup _{\xi \rightarrow \infty} \frac{S(\xi)}{\xi^\alpha}<\infty$ with some $\alpha<2$ and the initial data are reasonably regular but arbitrary large, or $S(\xi)=\xi^2$ for $\xi \geq 0$ and the initial data are such that $v_0$ satisfies an appropriate smallness condition.


In the past decade, the following relatively simpler nutrient chemotaxis system      
\begin{equation}\label{-A31}
\begin{cases}
u_t=\Delta u-\nabla \cdot(u \nabla v), & x \in \Omega, t>0, \\ v_t=\Delta v-u v, & x \in \Omega, t>0
\end{cases}
\end{equation}
has been considered extensively on the global existence of solutions. It is proven that \eqref{-A31} with homogeneous Neumann boundary conditions in bounded two-dimensional domains admits global classical solutions \cite{2012-CPDE-Winkler} and in three-dimensional bounded domains has global weak solutions that eventually become smooth \cite{2012-jde-taoyou}. When $\left\|v_0 \right\|_{L^{\infty}(\Omega)}$ is sufficiently small and $n \geq 2$, Tao \cite{2011-JMAA-Tao} obtained global bounded classical solutions. In the higher dimensional setting, Wang and Li \cite{2019-EJDE-WangLi} showed that this model possesses at least one
global renormalized solution. We refer to \cite{2016-CVPDE-CaoLankeit, 2018-JDE-Winklera} and the survey \cite{2015-MMMAS-BellomoBellouquidTaoWinkler,2023-SAM-LankeitWinkler} for more related results.

Motivated by the above works, it is, therefore, the objective of this paper to investigate the global existence of bounded weak solutions for a doubly degenerate nutrient taxis system \eqref{-A1-1} without any smallness restriction on initial data in two-dimensional setting. We now state our main results.

\textbf{Main results.} In this paper, we shall be concerned with the initial-boundary value problem
\begin{equation}\label{-A1}
\begin{cases}u_t=\nabla \cdot(u v \nabla u)-\chi \nabla \cdot\left(u^{2}v \nabla v\right)+ \ell u v, & x \in \Omega, t>0, \\ v_t=\Delta v-u v, & x \in \Omega, t>0, \\ \left(u v \nabla u-\chi u^{2} v \nabla v\right) \cdot \nu=\nabla v \cdot \nu=0, & x \in \partial \Omega, t>0, \\ u(x, 0)=u_0(x), \quad v(x, 0)=v_0(x), & x \in \Omega \end{cases}
\end{equation}
in a smoothly convex bounded domain $\Omega \subset \mathbb{R}^2$. To state our main result, throughout our analysis, we will assume that initial data satisfy 
\begin{equation}\label{th1-2}
\begin{cases}
u_0 \in W^{1, \infty}(\Omega) \text { is nonnegative with } u_0 \not \equiv 0  \text { and } \int_{\Omega} \ln u_0>-\infty, \quad \text { and }  \\
v_0 \in W^{1, \infty}(\Omega) \text { is positive in } \bar{\Omega} .
\end{cases}
\end{equation}
\begin{thm} \label{th1}
Let $\chi>0$, $\ell \geq 0$ and let $\Omega \subset \mathbb{R}^2$ be a bounded convex domain with smooth boundary. Assume that the initial value $\left(u_0, v_0\right)$ satisfies \eqref{th1-2}. Then there exist functions
\begin{align}\label{5.4-1429}
\begin{cases}
u \in C^{0}(\bar{\Omega} \times[0, \infty)) \quad \text { and } \\
v \in C^0(\bar{\Omega} \times[0, \infty)) \cap C^{2,1}(\bar{\Omega} \times(0, \infty))
\end{cases}
\end{align}
such that $u > 0$ and $v>0$ a.e in $\Omega \times(0, \infty)$, and that $(u, v)$ solves \eqref{-A1} in that in the sense of Definition \ref{def2.1} below, and that 
\begin{align*}
\|u(t)\|_{L^\infty(\Omega)}+\|v(t)\|_{W^{1, \infty}(\Omega)} \leq C \quad \text { for all } t>0.
\end{align*} 
\end{thm}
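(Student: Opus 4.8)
The plan is to construct the solution via a non-degenerate approximation, derive a family of uniform estimates on the approximate solutions that survive the degenerate limit, and then pass to the limit. Concretely, I would replace the doubly degenerate diffusion $uv$ by something like $(u+\varepsilon)(v+\varepsilon)$ (or a suitably mollified, strictly positive coefficient), regularize the initial data to be smooth and strictly positive, and invoke standard parabolic theory to obtain smooth global-in-time solutions $(u_\varepsilon, v_\varepsilon)$ of the approximate problems. The first easy observations are the $\varepsilon$-independent bounds: nonnegativity of $u_\varepsilon$ and $v_\varepsilon$, the pointwise bound $0 < v_\varepsilon \le \|v_0\|_{L^\infty}$ from the maximum principle applied to the second equation (using $-uv \le 0$), and the mass evolution $\frac{d}{dt}\int_\Omega u_\varepsilon = \ell \int_\Omega u_\varepsilon v_\varepsilon \le \ell \|v_0\|_{L^\infty} \int_\Omega u_\varepsilon$, giving $\int_\Omega u_\varepsilon(\cdot,t) \le C(T)$ on finite intervals — though for the \emph{boundedness} claim one will need to do better and exploit that $\int_\Omega\int_0^\infty u_\varepsilon v_\varepsilon < \infty$, which follows by integrating the $v$-equation in time and using $\int_\Omega v_\varepsilon \ge 0$.

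**The heart of the matter** is an entropy-type functional. Following the one-dimensional precedent in \cite{2021-TAMS-Winkler} and the two-dimensional small-data work \cite{2022-NARWA-Winkler}, I would test the $u$-equation with $\ln u_\varepsilon$ (this is where the hypothesis $\int_\Omega \ln u_0 > -\infty$ enters, to control the initial value of $\int_\Omega \ln u_\varepsilon$) and combine it with suitable multiples of $\int_\Omega |\nabla \sqrt{v_\varepsilon}|^2$, $\int_\Omega v_\varepsilon \ln v_\varepsilon$, and perhaps $\int_\Omega u_\varepsilon^p$ for some small $p$, chosen so that the bad cross-diffusion contribution $\chi \int_\Omega u_\varepsilon v_\varepsilon |\nabla v_\varepsilon|^2 / $ (something) is absorbed. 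The point of working in $\mathbb{R}^2$ with a \emph{convex} domain is that the boundary term in $\frac{d}{dt}\int_\Omega |\nabla v_\varepsilon|^2$ coming from $\int_{\partial\Omega} \partial_\nu |\nabla v_\varepsilon|^2$ has a favorable sign, and the critical Gagliardo–Nirenberg inequality $\|w\|_{L^4}^4 \le C\|\nabla w\|_{L^2}^2 \|w\|_{L^2}^2 + C\|w\|_{L^2}^4$ in two dimensions is exactly strong enough to close a coupled differential inequality for a functional of the form $\mathcal{E}_\varepsilon(t) = \int_\Omega u_\varepsilon \ln u_\varepsilon + a\int_\Omega |\nabla \sqrt{v_\varepsilon + \varepsilon}|^2 + \cdots$, yielding a Grönwall-type bound $\mathcal{E}_\varepsilon(t) \le C$ together with a space-time gradient estimate $\int_0^\infty \int_\Omega \frac{|\nabla u_\varepsilon|^2 v_\varepsilon}{u_\varepsilon} < \infty$ and $\int_0^\infty\int_\Omega |\nabla v_\varepsilon|^4 < \infty$ or similar. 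I expect this coupled-functional computation, and in particular the precise choice of weights and the bookkeeping of all cross terms so that every dangerous term is dominated, to be the main obstacle.

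**Upgrading to pointwise bounds and compactness.** From the entropy bounds I would bootstrap: standard parabolic smoothing on $v_\varepsilon = $ solution of $\partial_t v_\varepsilon = \Delta v_\varepsilon - u_\varepsilon v_\varepsilon$ with $u_\varepsilon v_\varepsilon$ controlled in some $L^q_tL^p_x$ gives $\|\nabla v_\varepsilon(\cdot,t)\|_{L^\infty} \le C$ (uniformly in $t$ and $\varepsilon$, for $t\ge\tau>0$, then down to $t=0$ using $v_0\in W^{1,\infty}$), which is the $\|v\|_{W^{1,\infty}}$ part of the conclusion. For the $L^\infty$-bound on $u_\varepsilon$ I would run a Moser–Alikakos iteration on $\int_\Omega u_\varepsilon^p$: testing the $u$-equation with $u_\varepsilon^{p-1}$ produces a good dissipation term $\sim \int_\Omega u_\varepsilon^{p-1} v_\varepsilon |\nabla u_\varepsilon|^2$ and a bad term $\sim \chi \int_\Omega u_\varepsilon^{p} v_\varepsilon \nabla u_\varepsilon \cdot \nabla v_\varepsilon$ plus the source $\ell \int u_\varepsilon^p v_\varepsilon$, and with $\nabla v_\varepsilon$ now bounded in $L^\infty$ and $v_\varepsilon$ bounded, the bad term is controlled by Young's inequality against the dissipation, yielding an $\varepsilon$-independent bound on $\|u_\varepsilon\|_{L^\infty(\Omega\times(0,\infty))}$. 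Finally I would collect enough regularity — Hölder estimates for $u_\varepsilon$ via the degenerate-but-now-uniformly-bounded structure, or at least strong $L^1_{loc}$ compactness from the gradient bounds plus Aubin–Lions, and Schauder estimates for $v_\varepsilon$ — to extract a subsequence with $u_\varepsilon \to u$ in $C^0_{loc}$ and $v_\varepsilon \to v$ in $C^{2,1}_{loc}(\bar\Omega\times(0,\infty))\cap C^0$, verify that the limit satisfies the weak formulation of Definition~\ref{def2.1} (the flux $u v \nabla u - \chi u^2 v \nabla v = \nabla(\tfrac12 u^2 v) - \tfrac12 u^2\nabla v - \chi u^2 v\nabla v$ should be rewritten in a divergence-friendly form so that only products of strongly/weakly convergent factors appear), and that the uniform bounds pass to the limit to give the stated estimate. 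Positivity of $u$ a.e.\ would follow from a lower bound on $\int_\Omega \ln u_\varepsilon$ preventing $u$ from vanishing on a set of positive measure, and positivity of $v$ from the strong maximum principle for the limit equation.
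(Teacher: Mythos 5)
Your overall architecture (positive approximation, entropy estimates, gradient bound for $v$, Moser iteration for $u$, compactness and limit passage) parallels the paper, but there is a genuine gap at the decisive step, namely the \emph{uniform-in-time} $L^p$ and $L^\infty$ bounds for $u_\varepsilon$. Testing with $u_\varepsilon^{p-1}$ gives
\begin{align*}
\frac{d}{dt}\int_\Omega u_\varepsilon^{p}+\frac{p(p-1)}{2}\int_\Omega u_\varepsilon^{p-1}v_\varepsilon|\nabla u_\varepsilon|^2\ \le\ Cp^2\int_\Omega u_\varepsilon^{p+1}v_\varepsilon+p\ell\int_\Omega u_\varepsilon^{p}v_\varepsilon ,
\end{align*}
and your plan is to absorb the right-hand side into the dissipation by Young/Gagliardo--Nirenberg once $\nabla v_\varepsilon\in L^\infty$. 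This fails globally in time: the dissipation carries the weight $v_\varepsilon$, and $v_\varepsilon$ admits no $t$-independent positive lower bound (the comparison argument of Lemma \ref{512-1649} only yields a bound decaying like $e^{-ct}$, and $v$ is expected to vanish as $t\to\infty$). Consequently $\int_\Omega v_\varepsilon|\nabla u_\varepsilon^{(p+1)/2}|^2$ does not control $\|\nabla u_\varepsilon^{(p+1)/2}\|_{L^1}^2$ (one would need $\int_\Omega v_\varepsilon^{-1}$), so the zero-order term $\int_\Omega u_\varepsilon^{p+1}v_\varepsilon$ cannot be dominated by the diffusion-induced integral; at best your scheme produces $T$-dependent constants, not the uniform bound $\|u(t)\|_{L^\infty(\Omega)}\le C$ for all $t>0$ claimed in the theorem. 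The same difficulty already blocks the intermediate step: the time-uniform $L^p$ bounds on $u_\varepsilon$ for some $p>2$, which you need \emph{before} the semigroup argument for $\|\nabla v_\varepsilon\|_{L^\infty}$, cannot be obtained from the entropy inequalities you list (which control $\int_\Omega u_\varepsilon\ln u_\varepsilon$, $\int_\Omega|\nabla\sqrt{v_\varepsilon}|^2$ and space-time integrals only).

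The paper circumvents this by trading absorption for temporal decay. The key objects are the weighted functionals $\int_\Omega |\nabla v_\varepsilon|^2/v_\varepsilon$ and, crucially, the quasi-energy $4b\int_\Omega u_\varepsilon\ln u_\varepsilon+\int_\Omega|\nabla v_\varepsilon|^4/v_\varepsilon^3$ (Lemmas \ref{n-lem-3.3a}, \ref{lem-513-1912}, \ref{4.10-lem-1}), which yield a time-uniform bound on $\int_\Omega|\nabla v_\varepsilon|^4/v_\varepsilon^3$ and the space-time integrability of $\int_\Omega|\nabla v_\varepsilon|^6/v_\varepsilon^5$ and $\int_\Omega u_\varepsilon v_\varepsilon$ over all of $(0,\infty)$; two $W^{1,1}\hookrightarrow L^2$ functional inequalities (Lemmas \ref{lemma-4.23-23:32}, \ref{4.7-lem-2-1}, \ref{lem-511-1522}) convert $\int_\Omega u_\varepsilon^{p+1}v_\varepsilon$ and $\int_\Omega u_\varepsilon^{p+1}v_\varepsilon|\nabla v_\varepsilon|^2$ into a small multiple of the weighted dissipation plus terms multiplied by exactly these time-integrable quantities. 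The $L^p$ bounds (Lemma \ref{lem-4.41a}) and the $L^\infty$ bound (Lemma \ref{lem-4.25-1259}) then follow by integrating ODEs of the form $y'\le g(t)+h(t)y$ with $g,h\in L^1(0,\infty)$ and by a recursive Moser scheme whose $k$-th step is driven by the $L^1(0,\infty)$-norm of $\int_\Omega|\nabla v_\varepsilon|^6/v_\varepsilon^5+\int_\Omega u_\varepsilon v_\varepsilon$ and closed by the iteration lemma \ref{lem-511-1524} — not by parabolic absorption. Without this mechanism (or an equivalent substitute exploiting the decay of $v$), your proof of the stated global boundedness does not close; the remaining parts of your outline (approximation, basic estimates, semigroup bound for $\nabla v$ once $u\in L^\infty_tL^p_x$ with $p>2$, Hölder/Schauder compactness, limit passage, a.e. positivity via the $\ln u$ control) are consistent with the paper.
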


\vskip 3mm
The remainder of this paper is organized as follows. 
In Section \ref{section2}, we define the global weak solutions of the system \eqref{-A1} and provide some initial findings regarding the local-in-time existence of the problem \eqref{-A1}. In Section \ref{section3}, we give $L^p$ bounds for $u$ and state straightforward consequences.
In Section \ref{section4}, we give the proof of the Theorem \ref{th1}.



\section{Preliminaries. Classical solutions to regularized problems}\label{section2}

We first define the global weak solutions of the system (\ref{-A1}). 

\begin{defn} \label{def2.1}
Let $\chi>0$, $\ell \geq 0$ and $\Omega \subset \mathbb{R}^n$ $(n \geq 1)$ be a bounded domain with smooth boundary. Suppose that $u_0 \in L^1(\Omega)$ and $v_0 \in L^1(\Omega)$ are nonnegative. Then by a global weak solution of (\ref{-A1}) we mean a pair $(u, v)$ of functions satisfying
\begin{equation}\label{-2.1}
\begin{cases}
u \in L_{l o c}^1(\bar{\Omega} \times[0, \infty)) \quad \text { and } \\
v \in L_{l o c}^{\infty}(\bar{\Omega} \times[0, \infty)) \cap L_{l o c}^1\left([0, \infty) ; W^{1,1}(\Omega)\right)
\end{cases}
\end{equation}
and
\begin{equation}\label{-2.2}
u^2 \in L_{l o c}^1\left([0, \infty) ; W^{1,1}(\Omega)\right) \quad \text { and } \quad u^{2} v \nabla v \in L_{l o c}^1\left(\bar{\Omega} \times[0, \infty) ; \mathbb{R}^n\right)
\end{equation}
which are such that
\begin{align}\label{-2.3}
-\int_0^{\infty} \int_{\Omega} u \varphi_t-\int_{\Omega} u_0 \varphi(\cdot, 0)=&\frac{1}{2} \int_0^{\infty} \int_{\Omega} v  \nabla u^{2} \cdot \nabla \varphi
+\chi \int_0^{\infty} \int_{\Omega} u^{2} v \nabla v \cdot \nabla \varphi \nonumber\\
& +\ell \int_0^{\infty} \int_{\Omega} u v \varphi 
\end{align}
for all $\varphi \in C_0^{\infty}(\bar{\Omega} \times[0, \infty))$ fulfilling $\frac{\partial \varphi}{\partial \nu}=0$ on $\partial \Omega \times(0, \infty)$, and that
\begin{align}\label{-2.4}
\int_0^{\infty} \int_{\Omega} v \varphi_t+\int_{\Omega} v_0 \varphi(\cdot, 0)=\int_0^{\infty} \int_{\Omega} \nabla v \cdot \nabla \varphi+\int_0^{\infty} \int_{\Omega} u v \varphi
\end{align}
for all $\varphi \in C_0^{\infty}(\bar{\Omega} \times[0, \infty))$.

\end{defn}
In order to accomplish the construction of weak solutions through a convenient approximation, for $\chi>0$, $\ell \geq 0$ and $\varepsilon \in(0,1)$ we consider the problem
\begin{align}\label{-2.5}
\begin{cases}u_{\varepsilon t}=\nabla \cdot\left(u_{\varepsilon} v_{\varepsilon} \nabla u_{\varepsilon}\right)-\chi \nabla \cdot\left(u_{\varepsilon}^{2} v_{\varepsilon} \nabla v_{\varepsilon}\right)+\ell u_{\varepsilon} v_{\varepsilon}, & x \in \Omega, t>0, \\ v_{\varepsilon t}=\Delta v_{\varepsilon}-u_{\varepsilon} v_{\varepsilon}, & x \in \Omega, t>0, \\ \frac{\partial u_{\varepsilon}}{\partial v}=\frac{\partial v_{\varepsilon}}{\partial v}=0, & x \in \partial \Omega, t>0, \\ u_{\varepsilon}(x, 0)=u_{0 \varepsilon}(x):=u_0(x)+\varepsilon, \quad v_{\varepsilon}(x, 0)=v_{0 \varepsilon}(x):=v_0(x), & x \in \Omega,\end{cases}
\end{align}
which indeed admits a global classical solution.\\

Next, we give the local existence and extension criterion for the system \eqref{-2.5}. The following lemma is proved in a similar way as Lemma 2.2 in  \cite{2021-TAMS-Winkler}, so we omit the details.
\begin{lem}\label{lemma-2.1}
Let $\chi>0$, $\ell \geq 0$ and let $\Omega \subset \mathbb{R}^n$ $(n \geq 1)$ be a bounded domain with smooth boundary, and suppose that (\ref{th1-2}) holds. Then for each $\varepsilon \in(0,1)$, there exist $T_{\max , \varepsilon} \in(0, \infty]$ and at least one pair $\left(u_{\varepsilon}, v_{\varepsilon}\right)$ of functions
\begin{align}\label{-2.6}
\begin{cases}
u_{\varepsilon} \in C^0\left(\bar{\Omega} \times\left[0, T_{\max , \varepsilon}\right)\right) \cap C^{2,1}\left(\bar{\Omega} \times\left(0, T_{\max , \varepsilon}\right)\right) \quad \text{ and } \\
v_{\varepsilon} \in \cap_{q \geq 1} C^0\left(\left[0, T_{\max , \varepsilon}\right) ; W^{1, q}(\Omega)\right) \cap C^{2,1}\left(\bar{\Omega} \times\left(0, T_{\max , \varepsilon}\right)\right)
\end{cases}
\end{align}
which are such that $u_{\varepsilon}>0$ and $v_{\varepsilon}>0$ in $\bar{\Omega} \times\left(0, T_{\max , \varepsilon}\right)$, that $\left(u_{\varepsilon}, v_{\varepsilon}\right)$ solves \eqref{-2.5} in the classical sense, and that
\begin{align}\label{-2.7}
if \text{  } T_{\max , \varepsilon}<\infty, \quad \text { then } \quad \limsup _{t \nearrow T_{\max , \varepsilon}}\left\|u_{\varepsilon}(t)\right\|_{L^{\infty}(\Omega)}=\infty.
\end{align}
In addition, this solution satisfies
\begin{align}\label{-2.8}
\int_{\Omega} u_{0 \varepsilon} \leq \int_{\Omega} u_{\varepsilon}(t) \leq \int_{\Omega} u_{0 \varepsilon}+\ell \int_{\Omega} v_{0 \varepsilon} \quad \text { for all } t \in\left(0, T_{\max , \varepsilon}\right)
\end{align}
and
\begin{align}\label{-2.81}
\int_{\Omega} v_{\varepsilon}(\cdot,t) \leq \int_{\Omega} v_{\varepsilon}\left(\cdot, t_0\right) \quad \text { for all } t_0 \in\left[0, T_{\max , \varepsilon}\right) \text { and any } t \in\left(t_0, T_{\max , \varepsilon}\right)
\end{align}
and
\begin{align}\label{-2.9}
\left\|v_{\varepsilon}(\cdot,t)\right\|_{L^{\infty}(\Omega)} \leq\left\|v_{\varepsilon}\left(\cdot, t_0\right)\right\|_{L^{\infty}(\Omega)} \quad \text { for all } t_0 \in\left[0, T_{\max , \varepsilon}\right) \text { and any } t \in\left(t_0, T_{\max , \varepsilon}\right)
\end{align}
as well as
\begin{align}\label{-2.10}
\int_{t_0}^{T_{\max , \varepsilon}} \int_{\Omega} u_{\varepsilon} v_{\varepsilon} \leq \int_{\Omega} v_{\varepsilon}\left(\cdot, t_0\right) \quad \text { for all } t_0 \in\left[0, T_{\max , \varepsilon}\right).
\end{align}
\end{lem}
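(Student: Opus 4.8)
The plan is to read \eqref{-2.5} as a quasilinear parabolic system that is \emph{uniformly} parabolic in a neighbourhood of the strictly positive initial data $(u_{0\varepsilon},v_{0\varepsilon})$, to obtain a local classical solution by a standard fixed-point construction, and then to extract \eqref{-2.7}--\eqref{-2.10} by elementary comparison and testing arguments. For local solvability I would fix $\varepsilon\in(0,1)$ and, for small $T>0$ and a radius $R>0$, run a Banach fixed-point scheme on $\{u\in C^0(\bar\Omega\times[0,T]):\|u-u_{0\varepsilon}\|_{L^\infty}\le R\}$, the radius chosen so that $\tfrac{\varepsilon}{2}\le u\le\|u_{0\varepsilon}\|_{L^\infty}+R$ there: given such $u$, first solve the linear problem $v_t=\Delta v-uv$ with homogeneous Neumann data and $v(\cdot,0)=v_{0\varepsilon}$, obtaining $v>0$ with the regularity in \eqref{-2.6} and $\nabla v$ controlled; then solve the quasilinear, now \emph{nondegenerate}, equation $\tilde u_t=\nabla\cdot(\tilde u v\nabla\tilde u)-\chi\nabla\cdot(\tilde u^{2} v\nabla v)+\ell\tilde u v$ by standard parabolic theory (its leading coefficient $\tilde uv$ lying between positive constants). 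For $T=T(\varepsilon)$ small the map $u\mapsto\tilde u$ is a contraction, and a continuation argument --- or a direct appeal to Amann's theory for quasilinear parabolic systems --- produces a maximal $T_{\max,\varepsilon}\in(0,\infty]$, a solution with \eqref{-2.6}, and a preliminary extensibility alternative in terms of, say, $\|u_\varepsilon(t)\|_{W^{1,q}(\Omega)}$ for some $q>n$. Positivity of $v_\varepsilon$ follows from the strong maximum principle for its linear equation ($0$ being a subsolution since $-u_\varepsilon v_\varepsilon\le0$ where $v_\varepsilon=0$), and positivity of $u_\varepsilon$ by comparison from below with a spatially homogeneous subsolution $\varepsilon e^{-\lambda t}$, with $\lambda$ chosen from bounds for $v_\varepsilon$, $\nabla v_\varepsilon$, $\Delta v_\varepsilon$ on compact subintervals of $(0,T_{\max,\varepsilon})$, the short initial interval being covered by continuity since $u_{0\varepsilon}\ge\varepsilon$.

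Next I would upgrade the extensibility criterion to the $L^\infty$-form \eqref{-2.7}. Arguing by contradiction, suppose $T_{\max,\varepsilon}<\infty$ but $\|u_\varepsilon(t)\|_{L^\infty(\Omega)}\le M$ on $(0,T_{\max,\varepsilon})$. Then \eqref{-2.9} gives $v_\varepsilon\le\|v_0\|_{L^\infty}$, and $v_{\varepsilon t}\ge\Delta v_\varepsilon-Mv_\varepsilon$ with comparison gives $v_\varepsilon\ge c_\tau>0$ on $[\tau,T_{\max,\varepsilon}]$ for each $\tau\in(0,T_{\max,\varepsilon})$. As $u_\varepsilon v_\varepsilon$ is bounded, iterated parabolic $L^p$- and Schauder estimates for $v_{\varepsilon t}=\Delta v_\varepsilon-u_\varepsilon v_\varepsilon$ yield a bound for $v_\varepsilon$ in $C^{2+\beta,1+\beta/2}(\bar\Omega\times[\tau,T_{\max,\varepsilon}])$, so that $\nabla v_\varepsilon$ is bounded and Hölder up to $t=T_{\max,\varepsilon}$. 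Consequently the first equation of \eqref{-2.5}, on $[\tau,T_{\max,\varepsilon}]$, is a divergence-form parabolic equation with bounded coefficients whose leading coefficient $u_\varepsilon v_\varepsilon$ is bounded above and satisfies $u_\varepsilon v_\varepsilon\ge c_\tau u_\varepsilon$; De Giorgi--Nash--Moser/DiBenedetto-type arguments (which tolerate a bounded leading coefficient degenerating only where $u_\varepsilon=0$) give a uniform Hölder estimate for $u_\varepsilon$, and Schauder theory then promotes it to a $C^{2+\beta,1+\beta/2}$ bound. Hence $(u_\varepsilon,v_\varepsilon)(\cdot,T_{\max,\varepsilon})$ is an admissible pair of data, the construction above continues the solution past $T_{\max,\varepsilon}$, and this contradiction proves \eqref{-2.7}. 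I expect this self-improvement of the mild $L^\infty$-control of $u_\varepsilon$ into the regularity needed to restart the solver to be the main obstacle, precisely because $u_\varepsilon v_\varepsilon$ degenerates wherever $u_\varepsilon$ vanishes; since the lemma is stated to follow as in Lemma~2.2 of \cite{2021-TAMS-Winkler}, I would carry over that argument essentially verbatim.

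The remaining properties are consequences of testing with the constant $1$. Integrating the first equation of \eqref{-2.5} over $\Omega$ and using the no-flux condition gives $\tfrac{d}{dt}\int_\Omega u_\varepsilon=\ell\int_\Omega u_\varepsilon v_\varepsilon\ge0$; integrating the second gives $\tfrac{d}{dt}\int_\Omega v_\varepsilon=-\int_\Omega u_\varepsilon v_\varepsilon\le0$, which is \eqref{-2.81}; adding the two shows $\int_\Omega u_\varepsilon(t)+\ell\int_\Omega v_\varepsilon(t)\equiv\int_\Omega u_{0\varepsilon}+\ell\int_\Omega v_{0\varepsilon}$, whence, using $v_\varepsilon\ge0$ and $\int_\Omega v_\varepsilon(t)\le\int_\Omega v_{0\varepsilon}$, both inequalities in \eqref{-2.8}. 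For \eqref{-2.9} one compares $v_\varepsilon$, which satisfies $v_{\varepsilon t}-\Delta v_\varepsilon=-u_\varepsilon v_\varepsilon\le0$ with homogeneous Neumann data, to the constant supersolution $\|v_\varepsilon(\cdot,t_0)\|_{L^\infty(\Omega)}$. Finally, integrating $\tfrac{d}{dt}\int_\Omega v_\varepsilon=-\int_\Omega u_\varepsilon v_\varepsilon$ over $(t_0,T)$ and letting $T\nearrow T_{\max,\varepsilon}$ gives $\int_{t_0}^{T_{\max,\varepsilon}}\int_\Omega u_\varepsilon v_\varepsilon\le\int_\Omega v_\varepsilon(\cdot,t_0)$, i.e.\ \eqref{-2.10}.
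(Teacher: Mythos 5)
Your outline is correct and coincides with the argument the paper itself defers to: the paper omits the proof of this lemma entirely, citing Lemma~2.2 of \cite{2021-TAMS-Winkler}, and that proof is exactly your scheme of a short-time fixed point for the $\varepsilon$-regularized (hence nondegenerate, since $u_{0\varepsilon}\geq\varepsilon$ and $v_0>0$) problem, positivity via comparison with $\varepsilon e^{-\lambda t}$ and the strong maximum principle, upgrade of the extensibility criterion by bootstrapping regularity of $v_\varepsilon$ and then of $u_\varepsilon$, and the identities \eqref{-2.8}--\eqref{-2.10} by integrating both equations over $\Omega$. (Your aside about DiBenedetto-type estimates tolerating degeneracy is not even needed, since the lower bound $u_\varepsilon\geq\varepsilon e^{-\lambda T_{\max,\varepsilon}}$ you established makes the first equation uniformly parabolic up to $T_{\max,\varepsilon}$ under the contradiction hypothesis.)
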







\section{Degeneracy control via functional inequalities. $L^p$ bounds for u}\label{section3}
Due to the boundedness of the mass of cells and the fact that is indicated by the second equation of \eqref{-2.5}, the nutrients are depleted by cells the following boundedness properties, already inherent in the system \eqref{-2.5} are of great importance.

\begin{lem}\label{lem-3.3}
Let $n \geq 1$, and assume that (\ref{th1-2}) holds. Then for all $K > 0$ there exists $C(K) > 0$ such that
\begin{equation}\label{-3.4} 
\int_0^{T_{\max , \varepsilon}} \int_{\Omega} v_{\varepsilon}\left|\nabla v_{\varepsilon}\right|^2 \leq C(K)
\end{equation}
for all $\varepsilon \in(0,1)$.
\end{lem}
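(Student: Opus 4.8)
The plan is to exploit the second equation $v_{\varepsilon t} = \Delta v_\varepsilon - u_\varepsilon v_\varepsilon$ directly, testing it against an appropriate power of $v_\varepsilon$ — here, against $v_\varepsilon$ itself. Multiplying by $v_\varepsilon$ and integrating over $\Omega$, integration by parts (using $\partial v_\varepsilon/\partial\nu = 0$) gives
\begin{align*}
\frac{1}{2}\frac{d}{dt}\int_\Omega v_\varepsilon^2 = -\int_\Omega |\nabla v_\varepsilon|^2 - \int_\Omega u_\varepsilon v_\varepsilon^2.
\end{align*}
This already controls $\int_0^{T_{\max,\varepsilon}}\int_\Omega |\nabla v_\varepsilon|^2$ in terms of $\int_\Omega v_{0\varepsilon}^2$, but it is the wrong weight; I need the degenerate weight $v_\varepsilon$ in front of $|\nabla v_\varepsilon|^2$. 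So instead I would test against a constant — i.e. integrate the PDE for $\tfrac{1}{2}v_\varepsilon^2$ is not quite it either; the cleanest route is to note $\nabla\cdot(v_\varepsilon\nabla v_\varepsilon) = v_\varepsilon\Delta v_\varepsilon + |\nabla v_\varepsilon|^2$, hence $v_\varepsilon\Delta v_\varepsilon = \tfrac{1}{2}\Delta(v_\varepsilon^2) - |\nabla v_\varepsilon|^2$, and multiply the $v_\varepsilon$-equation by $v_\varepsilon$ and integrate. The term $\int_\Omega v_\varepsilon\Delta v_\varepsilon = \tfrac12\int_\Omega\Delta(v_\varepsilon^2) - \int_\Omega|\nabla v_\varepsilon|^2 = -\int_\Omega|\nabla v_\varepsilon|^2$, since $\int_\Omega \Delta(v_\varepsilon^2) = \int_{\partial\Omega}\partial_\nu(v_\varepsilon^2) = 2\int_{\partial\Omega}v_\varepsilon\partial_\nu v_\varepsilon = 0$. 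That recovers the display above, still with the wrong weight.

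The correct maneuver is to multiply the second equation by $2v_\varepsilon$ — no: multiply by a function whose gradient pairs with $\nabla v_\varepsilon$ to produce $v_\varepsilon|\nabla v_\varepsilon|^2$. That is, test against $v_\varepsilon^2$: then $\int_\Omega v_\varepsilon^2\,\Delta v_\varepsilon = -\int_\Omega \nabla(v_\varepsilon^2)\cdot\nabla v_\varepsilon = -2\int_\Omega v_\varepsilon|\nabla v_\varepsilon|^2$, and the left side becomes $\int_\Omega v_\varepsilon^2 v_{\varepsilon t} = \tfrac13\tfrac{d}{dt}\int_\Omega v_\varepsilon^3$, while the absorption term is $-\int_\Omega u_\varepsilon v_\varepsilon^3 \le 0$. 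Thus
\begin{align*}
\frac{1}{3}\frac{d}{dt}\int_\Omega v_\varepsilon^3 + 2\int_\Omega v_\varepsilon|\nabla v_\varepsilon|^2 = -\int_\Omega u_\varepsilon v_\varepsilon^3 \le 0,
\end{align*}
so that $2\int_0^{T_{\max,\varepsilon}}\int_\Omega v_\varepsilon|\nabla v_\varepsilon|^2 \le \tfrac13\int_\Omega v_{0\varepsilon}^3 = \tfrac13\int_\Omega v_0^3$. Since $v_0 \in W^{1,\infty}(\Omega) \subset L^\infty(\Omega)$ and $\Omega$ is bounded, $\int_\Omega v_0^3 < \infty$, which yields the claim with $C(K) = \tfrac16\int_\Omega v_0^3$ — in fact independent of $K$, so the $K$-dependence in the statement is merely an artifact of the formulation and any such $C(K)$ works.

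There is essentially no obstacle here: the estimate is an immediate energy identity for the (non-degenerate, since $v_\varepsilon$ is smooth and positive on $(0,T_{\max,\varepsilon})$) scalar parabolic equation for $v_\varepsilon$, and all boundary terms vanish by the Neumann condition. The only minor points to be careful about are (i) justifying the integration by parts on $(0,T_{\max,\varepsilon})$ using the regularity $v_\varepsilon \in C^{2,1}(\bar\Omega\times(0,T_{\max,\varepsilon}))$ from \eqref{-2.6} together with $v_\varepsilon \in C^0([0,T_{\max,\varepsilon});W^{1,q}(\Omega))$ to pass to $t=0$, and (ii) integrating the differential identity over $(\delta, t)$ and letting $\delta \searrow 0$, then letting $t \nearrow T_{\max,\varepsilon}$ by monotone convergence. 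This is exactly the style of argument already used to obtain \eqref{-2.10} in Lemma \ref{lemma-2.1}, so I would simply mirror that computation with $v_\varepsilon^2$ as the test function in place of the constant $1$.
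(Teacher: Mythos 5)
Your proposal is correct and coincides with the paper's own argument: multiply the second equation by $v_\varepsilon^2$, integrate by parts to obtain $\tfrac13\tfrac{d}{dt}\int_\Omega v_\varepsilon^3=-2\int_\Omega v_\varepsilon|\nabla v_\varepsilon|^2-\int_\Omega u_\varepsilon v_\varepsilon^3$, drop the nonpositive absorption term, and integrate in time to bound everything by the initial data. Your observation that the constant really only depends on $v_0$ (not on $K$) is consistent with the paper's bound $|\Omega|\|v_0\|_{L^\infty(\Omega)}^3/3$.
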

\begin{proof}
Let $\varepsilon \in(0,1)$. Multiplying the second equation in (\ref{-2.5}) by $v_{\varepsilon}^{2}$ and integrating by parts, we see that
\begin{align*}
\frac{1}{3} \frac{d}{d t} \int_{\Omega} v_{\varepsilon}^3=-2 \int_{\Omega} v_{\varepsilon}\left|\nabla v_{\varepsilon}\right|^2-\int_{\Omega} u_{\varepsilon} v_{\varepsilon}^3 \quad \text { for all } t \in\left(0, T_{\max , \varepsilon}\right).
\end{align*}
Based on \eqref{-2.9}, an integration of this implies that
\begin{align*}
\frac{1}{3} \int_{\Omega} v_{\varepsilon}^3(t)+2 \int_0^t \int_{\Omega} v_{\varepsilon}\left|\nabla v_{\varepsilon}\right|^2 & \leq \frac{1}{3} \int_{\Omega} v_{\varepsilon}^3(\cdot, 0) \\
& \leq \frac{|\Omega| \left\|v_0\right\|_{L^{\infty}(\Omega)}^3}{3} \quad \text { for all } t \in\left(0, T_{\max , \varepsilon}\right). 
\end{align*}
In view of the above inequality, we obtain \eqref{-3.4}. 
\end{proof}

\begin{lem}\label{n-lem-3.2a}
Let $K>0$. Then there exists $C(K)>0$ such that if $u_0$ and $v_0$ satisfy \eqref{th1-2} and \begin{align}\label{n-3.1}
\int_{\Omega} \ln u_0 \geq-K \quad \text { and } \quad \int_{\Omega} u_0 \leq K
\end{align}
as well as
\begin{align}\label{n-3.2}
\int_{\Omega} v_0 \leq K \quad \text { and } \quad \int_{\Omega} |\nabla v_{0 }|^2 \leq K,
\end{align}
then we have
\begin{align}\label{n-3.4}
\int_0^{T_{\max , \varepsilon}} \int_{\Omega} \frac{v_{\varepsilon}}{u_{\varepsilon}}|\nabla  u_{\varepsilon }|^2 \leq C(K) \quad \text { for all } \varepsilon \in(0,1)
\end{align}
and
\begin{align}\label{n-3.5} 
\int_0^{T_{\max , \varepsilon}} \int_{\Omega} v_{\varepsilon} \leq C(K) \quad \text { for all } \varepsilon \in(0,1).
\end{align}
\end{lem}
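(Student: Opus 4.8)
The natural strategy is an energy-type estimate built on the functional $\int_\Omega \ln u_\varepsilon$, which is the standard entropy for the doubly degenerate diffusion $\nabla\cdot(u v\nabla u)$. The plan is to test the first equation of \eqref{-2.5} with $\frac{1}{u_\varepsilon}$. Using $\nabla\cdot(u_\varepsilon v_\varepsilon\nabla u_\varepsilon)$ this produces, after integration by parts, the dissipation term $\int_\Omega \frac{v_\varepsilon}{u_\varepsilon}|\nabla u_\varepsilon|^2$ which is precisely the left side of \eqref{n-3.4}; the cross-diffusion term $-\chi\nabla\cdot(u_\varepsilon^2 v_\varepsilon\nabla v_\varepsilon)$ contributes $-\chi\int_\Omega v_\varepsilon\nabla u_\varepsilon\cdot\nabla v_\varepsilon$ after integration by parts (the boundary terms vanish by the Neumann conditions), and the source term $\ell u_\varepsilon v_\varepsilon$ gives $\ell\int_\Omega v_\varepsilon$. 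Schematically,
\begin{align*}
\frac{d}{dt}\int_\Omega \ln u_\varepsilon + \int_\Omega \frac{v_\varepsilon}{u_\varepsilon}|\nabla u_\varepsilon|^2 = \chi\int_\Omega v_\varepsilon\nabla u_\varepsilon\cdot\nabla v_\varepsilon + \ell\int_\Omega v_\varepsilon.
\end{align*}
The first term on the right is absorbed by Young's inequality: $\chi\int_\Omega v_\varepsilon\nabla u_\varepsilon\cdot\nabla v_\varepsilon \leq \frac12\int_\Omega \frac{v_\varepsilon}{u_\varepsilon}|\nabla u_\varepsilon|^2 + \frac{\chi^2}{2}\int_\Omega u_\varepsilon v_\varepsilon|\nabla v_\varepsilon|^2$, leaving a remainder that must be controlled.

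Next I would integrate in time over $(0,T_{\max,\varepsilon})$. The contribution $\int_0^{T_{\max,\varepsilon}}\int_\Omega u_\varepsilon v_\varepsilon|\nabla v_\varepsilon|^2$ arising from the Young splitting is the point where an extra a priori bound is needed; here I expect the argument to invoke a smallness or convexity-type inequality — presumably a parabolic estimate for $v_\varepsilon$ that bounds $\int_0^{T}\int_\Omega |\nabla v_\varepsilon|^2$ or $\int_0^T\int_\Omega v_\varepsilon|\nabla v_\varepsilon|^2$ in terms of the data (Lemma \ref{lem-3.3} already gives $\int_0^{T_{\max,\varepsilon}}\int_\Omega v_\varepsilon|\nabla v_\varepsilon|^2 \leq C$), combined with the $L^\infty$ bound \eqref{-2.9} to pass from $u_\varepsilon v_\varepsilon|\nabla v_\varepsilon|^2$ to something like $\|v_\varepsilon(\cdot,0)\|_{L^\infty}\cdot u_\varepsilon|\nabla v_\varepsilon|^2$; but $u_\varepsilon$ is not bounded a priori, so this factor is genuinely problematic and must instead be handled by testing the $v_\varepsilon$-equation suitably (e.g. with $-\Delta v_\varepsilon$ on a convex domain, where the boundary integral has a favorable sign) to get $\int_0^T\int_\Omega u_\varepsilon v_\varepsilon|\nabla v_\varepsilon|^2$ directly bounded. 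The left side, after integration, yields $\int_\Omega \ln u_\varepsilon(\cdot,t) - \int_\Omega \ln u_{0\varepsilon}$; the second is bounded below by $-K$ via \eqref{n-3.1} (and $u_{0\varepsilon}\geq u_0$), while the first is bounded above because $\int_\Omega u_\varepsilon(\cdot,t)\leq K + \ell\|v_0\|_{L^1}$ by \eqref{-2.8} together with the elementary inequality $\ln s\leq s$, giving $\int_\Omega \ln u_\varepsilon(\cdot,t)\leq \int_\Omega u_\varepsilon(\cdot,t)\leq C(K)$.

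The bound \eqref{n-3.5} should then drop out as a byproduct: the term $\ell\int_0^{T_{\max,\varepsilon}}\int_\Omega v_\varepsilon$ appears on the right of the time-integrated identity, so if $\ell>0$ it is controlled by the already-established bounds, while if $\ell=0$ the estimate \eqref{n-3.5} must come separately — for instance by testing the $v_\varepsilon$-equation with a fixed smooth function, or more simply from $\int_0^{T_{\max,\varepsilon}}\int_\Omega u_\varepsilon v_\varepsilon\leq \int_\Omega v_{0\varepsilon}$ of \eqref{-2.10} together with a lower bound for $u_\varepsilon$, or via a standard duality/heat-semigroup argument using $\partial_t\int_\Omega v_\varepsilon = -\int_\Omega u_\varepsilon v_\varepsilon \leq 0$. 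I expect the main obstacle to be exactly the handling of $\int_0^{T_{\max,\varepsilon}}\int_\Omega u_\varepsilon v_\varepsilon|\nabla v_\varepsilon|^2$: closing the estimate requires this quantity to be bounded by the data alone, and the only structural feature available for that is the convexity of $\Omega$ (ensuring $\int_{\partial\Omega}\frac{\partial|\nabla v_\varepsilon|^2}{\partial\nu}\leq 0$) in a testing of the $v$-equation against $-\Delta v_\varepsilon$ or against $v_\varepsilon|\nabla v_\varepsilon|^{?}$, so the proof will hinge on combining that boundary sign condition with the mass control \eqref{-2.10} and the $L^\infty$ bound \eqref{-2.9}.
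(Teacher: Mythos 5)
Your identification of the entropy $\int_\Omega\ln u_\varepsilon$ and the way you treat the data terms (mass bound plus $\ln s\le s$, and $u_{0\varepsilon}\ge u_0$) match the paper, but there is a genuine gap at the decisive step: you apply Young's inequality to the cross term $\chi\int_\Omega v_\varepsilon\nabla u_\varepsilon\cdot\nabla v_\varepsilon$ right away, which leaves $\tfrac{\chi^2}{2}\int_\Omega u_\varepsilon v_\varepsilon|\nabla v_\varepsilon|^2$, and you then hope to bound $\int_0^T\!\!\int_\Omega u_\varepsilon v_\varepsilon|\nabla v_\varepsilon|^2$ independently by testing the $v_\varepsilon$-equation with $-\Delta v_\varepsilon$. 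That test does produce the good term $\int_\Omega u_\varepsilon|\nabla v_\varepsilon|^2$, but only at the price of the identity
\begin{align*}
\tfrac12\tfrac{d}{dt}\int_\Omega|\nabla v_\varepsilon|^2+\int_\Omega|\Delta v_\varepsilon|^2+\int_\Omega u_\varepsilon|\nabla v_\varepsilon|^2=-\int_\Omega v_\varepsilon\nabla u_\varepsilon\cdot\nabla v_\varepsilon,
\end{align*}
i.e.\ the very same cross term reappears. If you estimate it again by Young you reintroduce $\int_\Omega\frac{v_\varepsilon}{u_\varepsilon}|\nabla u_\varepsilon|^2$ with a factor proportional to $\|v_0\|_{L^\infty(\Omega)}$, and the two inequalities can only be combined into a closed estimate when something like $\chi^2\|v_0\|_{L^\infty(\Omega)}^2<1$ holds — exactly the smallness restriction the lemma (and the paper) is designed to avoid. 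The paper closes the argument without any absorption: it differentiates the coupled functional $-\int_\Omega\ln u_\varepsilon+\tfrac12\int_\Omega|\nabla v_\varepsilon|^2$ (the weight matching $\chi$) so that the two occurrences of $\int_\Omega v_\varepsilon\nabla u_\varepsilon\cdot\nabla v_\varepsilon$ cancel \emph{identically} before any inequality is used, leaving only signed terms $-\int_\Omega\frac{v_\varepsilon}{u_\varepsilon}|\nabla u_\varepsilon|^2-\int_\Omega v_\varepsilon-\int_\Omega|\Delta v_\varepsilon|^2-\int_\Omega u_\varepsilon|\nabla v_\varepsilon|^2$; one time integration then gives both \eqref{n-3.4} and \eqref{n-3.5}. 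In particular no convexity of $\Omega$ is needed at this stage (only $\partial_\nu v_\varepsilon=0$); the boundary sign condition $\partial_\nu|\nabla v_\varepsilon|^2\le0$ enters only in the later Lemma \ref{n-lem-3.3a}.

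Two smaller points. Your schematic identity has the signs wrong: testing with $1/u_\varepsilon$ gives $\frac{d}{dt}\int_\Omega\ln u_\varepsilon=\int_\Omega\frac{v_\varepsilon}{u_\varepsilon}|\nabla u_\varepsilon|^2-\chi\int_\Omega v_\varepsilon\nabla u_\varepsilon\cdot\nabla v_\varepsilon+\ell\int_\Omega v_\varepsilon$, so the dissipation and the $\ell$-term come out with the opposite signs to what you wrote; it is $-\int_\Omega\ln u_\varepsilon$ whose evolution yields \eqref{n-3.4}, and the source term then contributes $-\ell\int_\Omega v_\varepsilon$ as a \emph{good} term, which is precisely where \eqref{n-3.5} comes from in the paper's computation. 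Your fallback suggestions for \eqref{n-3.5} when $\ell=0$ do not work: \eqref{-2.10} cannot be converted into a bound on $\int_0^{T_{\max,\varepsilon}}\!\int_\Omega v_\varepsilon$ without a uniform positive lower bound on $u_\varepsilon$ (which is not available), and $\frac{d}{dt}\int_\Omega v_\varepsilon\le0$ gives monotonicity, not time-integrability.
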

\begin{proof}
Using \eqref{-2.5} and integrations by parts, we obtain
\begin{align}\label{n-3.6}
&\frac{d}{d t}\left\{-\int_{\Omega} \ln u_{\varepsilon}+  \frac{1}{2} \int_{\Omega} |\nabla v_{\varepsilon }|^2\right\} \nonumber\\
= & -\int_{\Omega} \frac{1}{u_{\varepsilon}} \cdot \left\{\nabla \cdot \left(u_{\varepsilon} v_{\varepsilon} \nabla u_{\varepsilon }-u_{\varepsilon}^2 v_{\varepsilon} \nabla v_{\varepsilon }\right)+u_{\varepsilon} v_{\varepsilon}\right\} \nonumber\\
& +\int_{\Omega} \nabla v_{\varepsilon } \cdot \nabla \left( \Delta  v_{\varepsilon }-u_{\varepsilon} v_{\varepsilon}\right) \nonumber\\
= & -\int_{\Omega} \frac{v_{\varepsilon}}{u_{\varepsilon}} |\nabla u_{\varepsilon }|^2+\int_{\Omega} v_{\varepsilon} \nabla u_{\varepsilon }\cdot \nabla v_{\varepsilon }-\int_{\Omega} v_{\varepsilon} \nonumber\\
& -\int_{\Omega}|\Delta v_{\varepsilon }|^2-\int_{\Omega} v_{\varepsilon} \nabla u_{\varepsilon }\cdot \nabla v_{\varepsilon }-\int_{\Omega} u_{\varepsilon} | \nabla v_{\varepsilon }|^2 \nonumber\\
\leq & -\int_{\Omega} \frac{v_{\varepsilon}}{u_{\varepsilon}} | \nabla u_{\varepsilon }|^2-\int_{\Omega} v_{\varepsilon} \quad \text { for all } t \in(0, T_{\max , \varepsilon}) \text { and } \varepsilon \in(0,1).
\end{align}
On integration in time, in view of \eqref{n-3.1} and \eqref{n-3.2} this implies that
\begin{align}\label{n-3.7}
-\int_{\Omega} \ln u_{\varepsilon}(t)+\int_0^{T_{\max , \varepsilon}} \int_{\Omega} \frac{v_{\varepsilon}}{u_{\varepsilon}} | \nabla u_{\varepsilon }|^2+\int_0^{T_{\max , \varepsilon}} \int_{\Omega} v_{\varepsilon} & \leq-\int_{\Omega} \ln u_0+\frac{1}{2} \int_{\Omega} | \nabla v_{0 }|^2 \nonumber\\
& \leq \frac{3}{2} K \quad \text { for all } \varepsilon \in(0,1).
\end{align}
According to \eqref{-2.8}, \eqref{n-3.1}, \eqref{n-3.2} and $\ln \xi \leq \xi$ for all $\xi>0$, we see that
\begin{align}\label{513-943}
\int_{\Omega} \ln u_{\varepsilon}(t) \leq \int_{\Omega} u_{\varepsilon}(t) \leq \int_{\Omega} u_0+\ell \int_{\Omega} v_0 \leq (\ell +1) K.
\end{align}
From \eqref{n-3.7}, we infer that
\begin{align*}
\int_0^{T_{\max , \varepsilon}} \int_{\Omega} \frac{v_{\varepsilon}}{u_{\varepsilon}}|\nabla u_{\varepsilon }|^2+\int_0^{T_{\max , \varepsilon}} \int_{\Omega} v_{\varepsilon} \leq (\ell +\frac{5}{2}) K \quad \text { for all } \varepsilon \in(0,1)
\end{align*}
and that thus also \eqref{n-3.4} and \eqref{n-3.5} hold.
\end{proof}

We now use the estimate \eqref{n-3.4} to derive the following additional
information on decay in the second solution component.

\begin{lem}\label{n-lem-3.3a}
Let $K>0$. Then there exists $C(K)>0$ such that if $u_0$ and $v_0$ satisfy \eqref{th1-2} and
\begin{align}\label{n-3.8}
\int_{\Omega} \frac{|\nabla v_{0 }|^2}{v_0} \leq K,
\end{align}
then
\begin{align}\label{n-3.9}
\int_0^{T_{\max , \varepsilon}} \int_{\Omega} \frac{u_{\varepsilon}}{v_{\varepsilon}}|\nabla  v_{\varepsilon }|^2 \leq C(K) \quad \text { for all } \varepsilon \in(0,1)
\end{align}
and
\begin{align}\label{n-3.10}
\int_0^{T_{\max , \varepsilon}} \int_{\Omega} \frac{|\nabla v_{\varepsilon }|^4}{v_{\varepsilon}^3} \leq C(K) \quad \text { for all } \varepsilon \in(0,1).
\end{align}
\end{lem}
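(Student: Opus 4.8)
The plan is to run a weighted testing-function argument on the $v_\varepsilon$-equation, multiplying by a negative power of $v_\varepsilon$ in order to produce the desired gradient quantities. More precisely, I would test the second equation in \eqref{-2.5} with $\frac{1}{v_\varepsilon}$; since $v_\varepsilon>0$ in $\bar\Omega\times(0,T_{\max,\varepsilon})$ by Lemma \ref{lemma-2.1}, this is admissible. Integrating by parts, $-\int_\Omega \frac{1}{v_\varepsilon}\Delta v_\varepsilon = -\int_\Omega \frac{|\nabla v_\varepsilon|^2}{v_\varepsilon^2}$ (the boundary term vanishes by $\partial_\nu v_\varepsilon=0$), and $\int_\Omega \frac{v_{\varepsilon t}}{v_\varepsilon} = \frac{d}{dt}\int_\Omega \ln v_\varepsilon$, while the reaction term contributes $+\int_\Omega u_\varepsilon$. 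This yields an identity of the form
\begin{align*}
\frac{d}{dt}\int_\Omega \ln v_\varepsilon = \int_\Omega \frac{|\nabla v_\varepsilon|^2}{v_\varepsilon^2} - \int_\Omega u_\varepsilon,
\end{align*}
which has the wrong sign for the gradient term. So instead I would test with $v_\varepsilon^{-s}$ for a suitable $s\in(0,1)$, or — more in the spirit of the hypothesis \eqref{n-3.8} which features $\frac{|\nabla v_0|^2}{v_0}$ — combine the logarithmic functional with an auxiliary functional so that the troublesome $\int_\Omega \frac{|\nabla v_\varepsilon|^2}{v_\varepsilon^2}$ term is absorbed. A cleaner route: test the $v_\varepsilon$-equation with $-\Delta v_\varepsilon/v_\varepsilon$ or directly track $\frac{d}{dt}\int_\Omega \frac{|\nabla v_\varepsilon|^2}{v_\varepsilon}$.

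Thus the first main step is to compute $\frac{d}{dt}\int_\Omega \frac{|\nabla v_\varepsilon|^2}{v_\varepsilon}$. Writing $w_\varepsilon = \ln v_\varepsilon$, one has $\nabla w_\varepsilon = \nabla v_\varepsilon/v_\varepsilon$ and the equation becomes $w_{\varepsilon t} = \Delta w_\varepsilon + |\nabla w_\varepsilon|^2 - u_\varepsilon$; the quantity $\int_\Omega \frac{|\nabla v_\varepsilon|^2}{v_\varepsilon} = \int_\Omega v_\varepsilon |\nabla w_\varepsilon|^2$. Differentiating and using the $w_\varepsilon$-equation together with $v_{\varepsilon t}=v_\varepsilon(w_{\varepsilon t})$, integrating by parts and invoking the pointwise inequality $\Delta |\nabla w|^2 \geq 2 D^2 w \cdot D^2 w^{\mathsf T}$-type Bochner identities, plus the convexity of $\Omega$ to control the boundary integral $\int_{\partial\Omega}\partial_\nu |\nabla w_\varepsilon|^2 \leq 0$ (this is the standard trick going back to the chemotaxis literature), should produce a differential inequality whose right-hand side, after an application of Young's inequality and the already-available bound \eqref{n-3.4} on $\int\int \frac{v_\varepsilon}{u_\varepsilon}|\nabla u_\varepsilon|^2$, controls both $\int\int \frac{u_\varepsilon}{v_\varepsilon}|\nabla v_\varepsilon|^2$ and $\int\int \frac{|\nabla v_\varepsilon|^4}{v_\varepsilon^3}$. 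The coupling is delicate: the cross-term between $\nabla u_\varepsilon$ and $\nabla v_\varepsilon$ appearing from the $u_\varepsilon$-dependence must be split so that half is absorbed by \eqref{n-3.4} and half remains.

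The second main step is the time integration: from the differential inequality $\frac{d}{dt}\int_\Omega \frac{|\nabla v_\varepsilon|^2}{v_\varepsilon} + c_1\int_\Omega \frac{u_\varepsilon}{v_\varepsilon}|\nabla v_\varepsilon|^2 + c_2\int_\Omega \frac{|\nabla v_\varepsilon|^4}{v_\varepsilon^3} \leq (\text{terms controlled via }\eqref{n-3.4}\text{ and }\eqref{-2.10})$, integrating over $(0,T_{\max,\varepsilon})$ and using the initial bound $\int_\Omega \frac{|\nabla v_{0}|^2}{v_0}\leq K$ from \eqref{n-3.8} together with nonnegativity of $\int_\Omega \frac{|\nabla v_\varepsilon|^2}{v_\varepsilon}(t)$ gives \eqref{n-3.9} and \eqref{n-3.10} with $C(K)$ depending only on $K$, $|\Omega|$, $\|v_0\|_{L^\infty}$, and the constant from Lemma \ref{n-lem-3.2a}. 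One should be careful that Lemma \ref{n-lem-3.2a} is invoked under its hypotheses \eqref{n-3.1}, \eqref{n-3.2}; since \eqref{n-3.8} controls $\int_\Omega \frac{|\nabla v_0|^2}{v_0}$ and $v_0>0$ on $\bar\Omega$ implies $v_0$ is bounded below, \eqref{n-3.8} together with the standing assumption \eqref{th1-2} yields a bound of the form \eqref{n-3.2}-type and the other quantities in \eqref{n-3.1} after enlarging $K$ appropriately, so the chain of constants closes.

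I expect the main obstacle to be the first step — identifying the correct weighted functional and carrying out the parabolic computation so that every term on the right-hand side is either manifestly nonpositive, absorbable into the left-hand side by Young's inequality, or controlled by the previously established space-time bound \eqref{n-3.4}. In particular, handling the boundary contribution requires the convexity of $\Omega$ (so that $\partial_\nu|\nabla v_\varepsilon|^2 \leq 0$ on $\partial\Omega$ when $\partial_\nu v_\varepsilon = 0$), and the term involving $\Delta v_\varepsilon$ coupled with $u_\varepsilon v_\varepsilon$ from the reaction must be treated by writing $u_\varepsilon v_\varepsilon = v_\varepsilon \cdot u_\varepsilon$ and splitting via $\int_\Omega \frac{2\nabla v_\varepsilon\cdot\nabla(u_\varepsilon v_\varepsilon)}{v_\varepsilon}$ into a part that is $\leq 0$ and a part bounded by $\varepsilon$-Young against $\int\int\frac{v_\varepsilon}{u_\varepsilon}|\nabla u_\varepsilon|^2$.
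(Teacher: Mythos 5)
Your overall strategy coincides with the paper's: you track exactly the functional $\int_\Omega \frac{|\nabla v_\varepsilon|^2}{v_\varepsilon}$, kill the boundary term $\int_{\partial\Omega}\frac{1}{v_\varepsilon}\partial_\nu|\nabla v_\varepsilon|^2$ by convexity, absorb the cross term $-\int_\Omega \nabla u_\varepsilon\cdot\nabla v_\varepsilon$ by Young's inequality against \eqref{n-3.4}, keep half of the $-\frac12\int_\Omega\frac{u_\varepsilon}{v_\varepsilon}|\nabla v_\varepsilon|^2$ contribution from the absorption term, and integrate in time using \eqref{n-3.8}; your remark about reconciling the hypotheses of Lemma \ref{n-lem-3.2a} with \eqref{n-3.8} is also fine.

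There is, however, one concrete step you gloss over, and it is the only genuinely nontrivial ingredient: how the dissipation term $\int_\Omega\frac{|\nabla v_\varepsilon|^4}{v_\varepsilon^3}$ gets onto the left-hand side. Carrying out the computation as you describe (using the identity $\nabla v\cdot\nabla\Delta v=\tfrac12\Delta|\nabla v|^2-|D^2v|^2$ and convexity for the boundary integral) yields, as in \eqref{n-3.11}, the good term $-\int_\Omega v_\varepsilon|D^2\ln v_\varepsilon|^2$; in the raw expansion the quantity $-\int_\Omega\frac{|\nabla v_\varepsilon|^4}{v_\varepsilon^3}$ does appear, but only together with the indefinite-sign companion $+2\int_\Omega\frac{1}{v_\varepsilon^2}\nabla v_\varepsilon\cdot(D^2v_\varepsilon\cdot\nabla v_\varepsilon)$, so it cannot simply be retained. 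No pointwise Bochner-type inequality resolves this: one needs the \emph{integral} functional inequality
\begin{align*}
\int_\Omega v_\varepsilon\left|D^2\ln v_\varepsilon\right|^2\;\geq\; c_1\int_\Omega\frac{|D^2 v_\varepsilon|^2}{v_\varepsilon}+c_1\int_\Omega\frac{|\nabla v_\varepsilon|^4}{v_\varepsilon^3},
\end{align*}
valid for positive functions with homogeneous Neumann data (this is \eqref{n-3.12} in the paper, quoted from Lemma 3.3 of \cite{2012-CPDE-Winkler} and Lemma 3.4 of \cite{2022-NARWA-Winkler}; its proof again uses integration by parts and the boundary convexity). Unless you invoke or reprove this lemma, your proposed differential inequality will not contain $c_2\int_\Omega\frac{|\nabla v_\varepsilon|^4}{v_\varepsilon^3}$, and conclusion \eqref{n-3.10} does not follow; with that lemma supplied, your plan closes exactly as in the paper.
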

\begin{proof}
From the regularity properties of $u_{\varepsilon}$ and $v_{\varepsilon}$ asserted by Lemma \ref{lemma-2.1}, in view of standard parabolic Schauder theory, it follows that $ v_{\varepsilon }$ belongs to $C^{2,1}(\bar{\Omega} \times(0, \infty))$ and satisfies the accordingly differentiated version of the second equation in \eqref{-2.5}. Thanks to the strict positivity of $v_{\varepsilon}$ in $\bar{\Omega} \times(0, \infty)$, 
\begin{align*}
\nabla v_{\varepsilon} \cdot \nabla \Delta v_{\varepsilon}=\frac{1}{2} \Delta\left|\nabla v_{\varepsilon}\right|^2-\left|D^2 v_{\varepsilon}\right|^2
\end{align*}
and
\begin{align*}
\left|D^2 \ln v_{\varepsilon}\right|^2=\frac{1}{v_{\varepsilon}^2}\left|D^2 v_{\varepsilon}\right|^2-\frac{2}{v_{\varepsilon}^3} \nabla v_{\varepsilon} \cdot\left(D^2 v_{\varepsilon} \cdot \nabla v_{\varepsilon}\right)+\frac{1}{v_{\varepsilon}^4}\left|\nabla v_{\varepsilon}\right|^4,
\end{align*}
we therefore integrate by parts to compute
\begin{align}\label{n-3.11}
\frac{1}{2} \frac{d}{d t} \int_{\Omega} \frac{1}{v_{\varepsilon}}\left|\nabla v_{\varepsilon}\right|^2= & \int_{\Omega} \frac{1}{v_{\varepsilon}} \nabla v_{\varepsilon} \cdot \nabla\left\{\Delta v_{\varepsilon}-u_{\varepsilon} v_{\varepsilon}\right\}-\frac{1}{2} \int_{\Omega} \frac{1}{v_{\varepsilon}^2}\left|\nabla v_{\varepsilon}\right|^2 \cdot\left\{\Delta v_{\varepsilon}-u_{\varepsilon} v_{\varepsilon}\right\} \nonumber\\
= & -\int_{\Omega} \frac{1}{v_{\varepsilon}}\left|D^2 v_{\varepsilon}\right|^2+\frac{1}{2} \int_{\Omega} \frac{1}{v_{\varepsilon}} \Delta\left|\nabla v_{\varepsilon}\right|^2-\frac{1}{2} \int_{\Omega} \frac{1}{v_{\varepsilon}^2}\left|\nabla v_{\varepsilon}\right|^2 \Delta v_{\varepsilon} \nonumber\\
& -\int_{\Omega} \frac{1}{v_{\varepsilon}} \nabla v_{\varepsilon} \cdot \nabla \left(u_{\varepsilon} v_{\varepsilon}\right)+\frac{1}{2} \int_{\Omega} \frac{u_{\varepsilon}}{v_{\varepsilon}}\left|\nabla v_{\varepsilon}\right|^2 \nonumber\\
= & -\int_{\Omega} \frac{1}{v_{\varepsilon}}\left|D^2 v_{\varepsilon}\right|^2+2 \int_{\Omega} \frac{1}{v_{\varepsilon}^2} \nabla v_{\varepsilon} \cdot\left(D^2 v_{\varepsilon} \cdot \nabla v_{\varepsilon}\right)-\int_{\Omega} \frac{1}{v_{\varepsilon}^3}\left|\nabla v_{\varepsilon}\right|^4 \nonumber\\
& +\frac{1}{2} \int_{\partial \Omega} \frac{1}{v_{\varepsilon}} \cdot \frac{\partial\left|\nabla v_{\varepsilon}\right|^2}{\partial \nu} \nonumber\\
& -\int_{\Omega} \nabla u_{\varepsilon} \cdot \nabla v_{\varepsilon}  -\frac{1}{2} \int_{\Omega} \frac{u_{\varepsilon}}{v_{\varepsilon}}\left|\nabla v_{\varepsilon}\right|^2 \nonumber\\
= & -\int_{\Omega} v_{\varepsilon}\left|D^2 \ln v_{\varepsilon}\right|^2+\frac{1}{2} \int_{\partial \Omega} \frac{1}{v_{\varepsilon}} \cdot \frac{\partial\left|\nabla v_{\varepsilon}\right|^2}{\partial \nu} \nonumber\\
&-\int_{\Omega} \nabla u_{\varepsilon} \cdot \nabla v_{\varepsilon}  -\frac{1}{2} \int_{\Omega} \frac{u_{\varepsilon}}{v_{\varepsilon}}\left|\nabla v_{\varepsilon}\right|^2. 
\end{align}
for all $t \in\left(0, T_{\max , \varepsilon}\right)$ and $\varepsilon \in(0,1)$. By \cite[Lemma 3.3]{2012-CPDE-Winkler} and \cite[Lemma 3.4]{2022-NARWA-Winkler}, we can find $c_1>0$ such that 
\begin{equation}\label{n-3.12}
\int_{\Omega} v_{\varepsilon}\left|D^2 \ln v_{\varepsilon}\right|^2 \geq c_1 \int_{\Omega} \frac{1}{v_{\varepsilon}}\left|D^2 v_{\varepsilon}\right|^2+c_1 \int_{\Omega} \frac{1}{v_{\varepsilon}^3}\left|\nabla v_{\varepsilon}\right|^4.
\end{equation}
By Young's inequality,
\begin{align}\label{n-3.14}
-\int_{\Omega} \nabla u_{\varepsilon} \cdot \nabla v_{\varepsilon} 
\leq \frac{1}{4}\int_{\Omega} \frac{u_{\varepsilon}}{v_{\varepsilon}}|\nabla v_{\varepsilon }|^2+\int_{\Omega} \frac{v_{\varepsilon}}{u_{\varepsilon}} |\nabla u_{\varepsilon }|^2 \quad \text { for all } t \in(0, T_{\max , \varepsilon}) \text { and } \varepsilon \in(0,1).
\end{align}
From \eqref{n-3.11}-\eqref{n-3.14} and using that $\frac{\partial|\nabla \varphi|^2}{\partial v} \leq 0$ on $\partial \Omega$ by convexity of $\Omega$ (\cite{1980-ARMA-Lions}), we obtain that
\begin{align}\label{n-3.15}
& \frac{d}{d t} \int_{\Omega} \frac{1}{v_{\varepsilon}}\left|\nabla v_{\varepsilon}\right|^2+c_1 \int_{\Omega} \frac{1}{v_{\varepsilon}}\left|D^2 v_{\varepsilon}\right|^2+c_1 \int_{\Omega} \frac{1}{v_{\varepsilon}^3}\left|\nabla v_{\varepsilon}\right|^4+\frac{1}{2} \int_{\Omega} \frac{u_{\varepsilon}}{v_{\varepsilon}} |\nabla v_{\varepsilon }|^2 \nonumber\\
\leq &  2 \int_{\Omega} \frac{v_{\varepsilon}}{u_{\varepsilon}} |\nabla u_{\varepsilon }|^2 \quad \text { for all } t \in(0, T_{\max , \varepsilon}) \text { and } \varepsilon \in(0,1).
\end{align}
A direct integration in \eqref{n-3.15} thereafter shows that
\begin{align*}
\int_{\Omega} \frac{1}{v_{\varepsilon}}\left|\nabla v_{\varepsilon}\right|^2+c_1 \int_0^{T_{\max , \varepsilon}} \int_{\Omega} \frac{1}{v_{\varepsilon}}\left|D^2 v_{\varepsilon}\right|^2&+c_1 \int_0^{T_{\max , \varepsilon}} \int_{\Omega} \frac{1}{v_{\varepsilon}^3}\left|\nabla v_{\varepsilon}\right|^4+\frac{1}{2}\int_0^{T_{\max , \varepsilon}}  \int_{\Omega} \frac{u_{\varepsilon}}{v_{\varepsilon}} |\nabla v_{\varepsilon }|^2 \\
& \leq \int_{\Omega} \frac{|\nabla v_{0 }|^2}{v_0}+ \int_0^{T_{\max , \varepsilon}} \int_{\Omega} \frac{v_{\varepsilon}}{u_{\varepsilon}}|\nabla  u_{\varepsilon }|^2 \quad \text { for all } \varepsilon \in(0,1)
\end{align*}
and that thus \eqref{n-3.9} and \eqref{n-3.10} hold due to Lemma \ref{n-lem-3.2a} and \eqref{n-3.8}.
\end{proof}

We present the relationship between $\int_{\Omega} \varphi^{p +1} \psi$ and $\int_{\Omega} \varphi^{p}$, which plays an important role in proving Theorem \ref{th1}. Next, we can derive a space-time $L^{2}$ bound for $u_{\varepsilon}$, weighted by the factor $v_{\varepsilon}$ using this relationship and the estimation obtained from Lemmas \ref{n-lem-3.3a}, \ref{n-lem-3.2a}, \ref{lem-3.3}, which  is the key to proving Lemma \ref{4.10-lem-1}. Unlike in most precedents, however, thanks to \eqref{-2.10}, \eqref{n-3.2} and \eqref{n-3.9}, the information thereby generated will here even include corresponding integrability over the whole existence interval, thus implicitly containing certain decay information.

\begin{lem}\label{lemma-4.23-23:32}
Let $\Omega \subset \mathbb{R}^2$ be a bounded domain with smooth boundary, and let $p \geq 1$. Then one can find $C\left(p \right)>0$ such that for each $\varphi \in C^1(\bar{\Omega})$ and $\psi \in C^1(\bar{\Omega})$ fulfilling $\varphi \geq 0$ and $\psi>0$ in $\bar{\Omega}$,
we have
\begin{align*}
\int_{\Omega} \varphi^{p +1} \psi \leq  C(p)\cdot \left\{\int_{\Omega} \frac{\varphi}{\psi}|\nabla \psi|^2+\int_{\Omega} \frac{\psi}{\varphi}|\nabla \varphi|^2+ \int_{\Omega} \varphi \psi  \right\}\cdot \int_{\Omega} \varphi^p .
\end{align*}
\end{lem}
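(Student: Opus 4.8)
The plan is to view $\int_\Omega\varphi^{p+1}\psi$ as $\|f\|_{L^2(\Omega)}^2$ for $f:=\varphi^{(p+1)/2}\psi^{1/2}$, and to exploit the fact that in two dimensions $W^{1,1}(\Omega)$ embeds into $L^2(\Omega)$. Since $\varphi,\psi\in C^1(\bar\Omega)$ with $\psi>0$ on the compact set $\bar\Omega$ and $(p+1)/2\ge1$, the map $s\mapsto s^{(p+1)/2}$ is $C^1$ on $[0,\infty)$ and hence $f\in C^1(\bar\Omega)\subset W^{1,1}(\Omega)$; the Sobolev inequality then provides $C_S=C_S(\Omega)>0$ with $\|f\|_{L^2(\Omega)}^2\le C_S\big(\|\nabla f\|_{L^1(\Omega)}^2+\|f\|_{L^1(\Omega)}^2\big)$. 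Everything then reduces to bounding these two terms, and the decisive point is that a single Cauchy--Schwarz step peels off the factor $\big(\int_\Omega\varphi^p\big)^{1/2}$ from each.

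For the gradient term I would expand $\nabla f=\tfrac{p+1}{2}\varphi^{(p-1)/2}\psi^{1/2}\nabla\varphi+\tfrac12\varphi^{(p+1)/2}\psi^{-1/2}\nabla\psi$ and rewrite the two summands (on the set $\{\varphi>0\}$) as $\varphi^{p/2}\cdot(\psi/\varphi)^{1/2}|\nabla\varphi|$ and $\varphi^{p/2}\cdot(\varphi/\psi)^{1/2}|\nabla\psi|$. Integrating and applying Cauchy--Schwarz with exactly this splitting gives $\int_\Omega\varphi^{(p-1)/2}\psi^{1/2}|\nabla\varphi|\le(\int_\Omega\varphi^p)^{1/2}(\int_\Omega\tfrac{\psi}{\varphi}|\nabla\varphi|^2)^{1/2}$ and the analogous bound with $\varphi$ and $\psi$ interchanged in the obvious way, so that $\|\nabla f\|_{L^1(\Omega)}^2\le C(p)\,\big(\int_\Omega\varphi^p\big)\big(\int_\Omega\tfrac{\psi}{\varphi}|\nabla\varphi|^2+\int_\Omega\tfrac{\varphi}{\psi}|\nabla\psi|^2\big)$. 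The possible degeneracy at $\{\varphi=0\}$ is harmless: at interior zeros $\nabla\varphi$ vanishes, the trace of this set on $\partial\Omega$ is Lebesgue-null, and if either weighted Dirichlet integral on the right is infinite the assertion is trivial anyway. For the zeroth-order term, writing $f=\varphi^{p/2}\cdot(\varphi\psi)^{1/2}$ and using Cauchy--Schwarz once more yields $\|f\|_{L^1(\Omega)}^2\le\big(\int_\Omega\varphi^p\big)\big(\int_\Omega\varphi\psi\big)$.

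Feeding these two estimates into the Sobolev inequality immediately gives the claim, e.g.\ with $C(p)=C_S\big((p+1)^2+2\big)$. I do not expect a real obstacle here; the only genuinely two-dimensional ingredient is the embedding $W^{1,1}\hookrightarrow L^2$, and this is precisely what lets the right-hand side come out as a product of a Dirichlet-type energy with $\int_\Omega\varphi^p$ — a more routine route via $W^{1,2}\hookrightarrow L^2$ plus interpolation would instead produce the useless term $\int_\Omega\varphi^{p-1}\psi|\nabla\varphi|^2$, which cannot be split in this way. The remaining care is merely bookkeeping of the weights and the degeneracy noted above.
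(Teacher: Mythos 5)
Your proposal is correct and follows essentially the same route as the paper: apply the two-dimensional Sobolev inequality $\|\rho\|_{L^2}^2\le C(\|\nabla\rho\|_{L^1}^2+\|\rho\|_{L^1}^2)$ to $\rho=\varphi^{\frac{p+1}{2}}\psi^{\frac12}$, expand the gradient, and peel off $\int_\Omega\varphi^p$ from each term by Cauchy--Schwarz with the same splittings. The only differences are cosmetic (your remark on the degenerate set $\{\varphi=0\}$ and a slightly different explicit constant).
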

\begin{proof}
According to the Sobolev inequality in the two-dimensional domain $\Omega$, we can find $c_1>0$ such that
\begin{align}\label{5.2-2238}
\int_{\Omega} \rho^2 \leq c_1\|\nabla \rho\|_{L^1(\Omega)}^2+c_1\|\rho\|_{L^{1}(\Omega)}^2 \quad \text { for all } \rho \in C^1(\bar{\Omega}),
\end{align}
which for fixed nonnegative $\varphi \in C^1(\bar{\Omega})$ and positive $\psi \in C^1(\bar{\Omega})$ we apply to $\rho:=\varphi^{\frac{p +1}{2}} \psi^{\frac{1}{2}}$ to infer that
\begin{align}\label{5.2-2304}
\int_{\Omega} \varphi^{p +1} \psi \leq & c_1 \cdot\left\{\int_{\Omega}\left|\frac{p +1}{2} \varphi^{\frac{p -1}{2}} \psi^{\frac{1}{2}} \nabla \varphi+\frac{1}{2} \varphi^{\frac{p +1}{2}} \psi^{-\frac{1}{2}} \nabla \psi\right|\right\}^2 \nonumber\\
& +c_1\cdot\left\{\int_{\Omega}\varphi^{\frac{p +1}{2}} \psi^{\frac{1}{2}}\right\}^{2}\nonumber\\
\leq & \frac{(p +1)^2 c_1}{2} \cdot\left\{\int_{\Omega} \varphi^{\frac{p -1}{2}} \psi^{\frac{1}{2}}|\nabla \varphi|\right\}^2+\frac{c_1}{2} \cdot\left\{\int_{\Omega} \varphi^{\frac{p +1}{2}} \psi^{-\frac{1}{2}}|\nabla \psi|\right\}^{2} \nonumber\\
& +c_1\cdot\left\{\int_{\Omega}\varphi^{\frac{p +1}{2}} \psi^{\frac{1}{2}}\right\}^{2}.
\end{align}
By Cauchy-Schwarz inequality,
\begin{align*}
\left\{\int_{\Omega} \varphi^{\frac{p -1}{2}} \psi^{\frac{1}{2}}|\nabla \varphi|\right\}^2 =\left\{\int_{\Omega} \varphi^{\frac{p}{2}}\cdot \frac{\psi^{\frac{1}{2}}}{\varphi^{\frac{1}{2}}} |\nabla \varphi|\right\}^2 \leq \int_{\Omega} \varphi^p  \cdot \int_{\Omega} \frac{\psi}{\varphi}|\nabla \varphi|^2
\end{align*}
and
\begin{align*}
\left\{\int_{\Omega} \varphi^{\frac{p +1}{2}} \psi^{-\frac{1}{2}}|\nabla \psi|\right\}^2 =
\left\{\int_{\Omega} \varphi^{\frac{p}{2}}\cdot \frac{\varphi^{\frac{1}{2}}}{\psi^{\frac{1}{2}}} |\nabla \psi|\right\}^2
\leq \int_{\Omega} \varphi^p  \cdot \int_{\Omega} \frac{\varphi}{\psi}|\nabla \psi|^2.
\end{align*}
Using H\"{o}lder inequality we see that
\begin{align*}
\left\{\int_{\Omega}\varphi^{\frac{p +1}{2}} \psi^{\frac{1}{2}}\right\}^{2} & 
=\left\{\int_{\Omega} \varphi^{\frac{p }{2}}  \cdot (\varphi \psi)^{\frac{1}{2}}\right\}^{2} \leq \int_{\Omega} \varphi^p  \cdot \int_{\Omega} \varphi \psi.
\end{align*}
The claim therefore results from \eqref{5.2-2304} if we let $C(p):=\max \left\{\frac{(p +1)^2 c_1}{2}, c_1\right\}$.
\end{proof}

As a necessary step for a later argument on the regularity of $u_{\varepsilon}$ in $L^p$, we must establish the following functional inequality as the second of our essential tools. It's important to note that this derivation requires a Sobolev embedding property based on the assumption that the spatial setting is either one- or two-dimensional.

\begin{lem}\label{4.7-lem-2-1}
Let $\Omega \subset \mathbb{R}^2$ be a bounded domain with smooth boundary, and let $p \geq 1$. Then for any $\eta>0$ there exists $C(p, \eta)>0$ such that for any $\varphi \in C^1(\bar{\Omega})$ and $\psi \in C^1(\bar{\Omega})$ fulfilling $\varphi \geq 0$ and $\psi>0$ in $\bar{\Omega}$, for all $\eta>0$ we have
\begin{align}\label{4.7-1}
\int_\Omega \varphi^{p+1} \psi |\nabla \psi|^2
\leq & \eta \int_\Omega \varphi^{p-1} \psi|\nabla \varphi|^2+ \eta \int_{\Omega}\varphi \psi \nonumber\\
& +  C(p, \eta)\cdot \left(\left\|\psi \right\|_{L^{\infty}(\Omega)}+\left\|\psi \right\|^3_{L^{\infty}(\Omega)}\right) \cdot \int_{\Omega} \varphi^{p+1} \psi  \cdot \int_\Omega \frac{|\nabla \psi|^4}{\psi^3} \nonumber\\
&+C(p, \eta)  \cdot \left\|\psi \right\|^4_{L^{\infty}(\Omega)} \cdot\left\{\int_\Omega \varphi\right\}^{2 p+1} \cdot \int_\Omega \frac{|\nabla \psi|^4}{\psi^3} 
\end{align}
\end{lem}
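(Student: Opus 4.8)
\emph{Proof strategy.} The plan is to turn \eqref{4.7-1} into a quadratic inequality for the left-hand quantity $I:=\int_\Omega\varphi^{p+1}\psi|\nabla\psi|^2$ by combining one Cauchy--Schwarz splitting with the two-dimensional Sobolev inequality \eqref{5.2-2238}, and then to solve for $I$. First I would write $|\nabla\psi|^2=\psi^{3/2}\cdot\frac{|\nabla\psi|^2}{\psi^{3/2}}$ and apply Cauchy--Schwarz to obtain
\[
I=\int_\Omega\varphi^{p+1}\psi^{5/2}\cdot\frac{|\nabla\psi|^2}{\psi^{3/2}}\ \le\ \Big(\int_\Omega\varphi^{2(p+1)}\psi^5\Big)^{1/2}\,J^{1/2},\qquad J:=\int_\Omega\frac{|\nabla\psi|^4}{\psi^3},
\]
so it suffices to control $\int_\Omega\varphi^{2(p+1)}\psi^5=\|\varphi^{p+1}\psi^{5/2}\|_{L^2(\Omega)}^2$. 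Applying \eqref{5.2-2238} to $\rho:=\varphi^{p+1}\psi^{5/2}$ (which is in $C^1(\bar\Omega)$ since $p+1\ge 2$ and $\psi>0$) gives $\int_\Omega\varphi^{2(p+1)}\psi^5\le c_1(\int_\Omega|\nabla\rho|)^2+c_1(\int_\Omega\rho)^2$, and I treat the two pieces separately.

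For the gradient piece, $\nabla\rho=(p+1)\varphi^{p}\psi^{5/2}\nabla\varphi+\tfrac52\varphi^{p+1}\psi^{3/2}\nabla\psi$; writing the first summand as $(\varphi^{(p-1)/2}\psi^{1/2}|\nabla\varphi|)\cdot(\varphi^{(p+1)/2}\psi^{2})$ and the second as $(\varphi^{(p+1)/2}\psi^{1/2}|\nabla\psi|)\cdot(\varphi^{(p+1)/2}\psi)$ and using Cauchy--Schwarz together with $\psi^k\le\|\psi\|_{L^\infty}^{k-1}\psi$ yields
\[
\Big(\int_\Omega|\nabla\rho|\Big)^{2}\ \le\ 2(p+1)^2\|\psi\|_{L^\infty}^{3}\,AB\ +\ \tfrac{25}{2}\,\|\psi\|_{L^\infty}\,AI,
\]
with $A:=\int_\Omega\varphi^{p+1}\psi$ and $B:=\int_\Omega\varphi^{p-1}\psi|\nabla\varphi|^2$. (Every power of $\psi$ attached to $\nabla\psi$ in this step is at least $\tfrac12$, so the strict positivity of $\psi$ is enough and no quantity like $\int_\Omega\psi^{-1}$ appears; this is what dictates the exponent $\tfrac52$ in $\rho$.)

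For the lower-order piece $\int_\Omega\rho=\int_\Omega\varphi^{p+1}\psi^{5/2}$ I would use a H\"older interpolation against $\int_\Omega\varphi$ and $\int_\Omega\varphi\psi$, choosing the exponents so that $\int_\Omega\varphi^{2(p+1)}\psi^5$ itself recurs to a power strictly below $1$: with $\tau:=\frac{2}{2p+1}$, writing $\psi^{5/2}=\psi^{5/2-\tau}\psi^\tau$ and applying H\"older with the triple $\big(\tfrac12,\tfrac1{2(2p+1)},\tfrac{p}{2p+1}\big)$ (which sums to $1$, last entry $<\tfrac12$) to the factorisation $\varphi^{1/2}\cdot(\varphi\psi)^{\frac1{2(2p+1)}}\cdot(\varphi^{2(p+1)}\psi^5)^{\frac{p}{2p+1}}$ of $\varphi^{p+1}\psi^{5/2-\tau}$ gives
\[
\int_\Omega\varphi^{p+1}\psi^{5/2}\ \le\ \|\psi\|_{L^\infty}^{\tau}\Big(\int_\Omega\varphi\Big)^{1/2}\Big(\int_\Omega\varphi\psi\Big)^{\frac1{2(2p+1)}}\Big(\int_\Omega\varphi^{2(p+1)}\psi^5\Big)^{\frac{p}{2p+1}}.
\]
Squaring, inserting this into the Sobolev estimate and absorbing the resulting sublinear power $\big(\int_\Omega\varphi^{2(p+1)}\psi^5\big)^{\frac{2p}{2p+1}}$ by Young's inequality leaves
\[
\int_\Omega\varphi^{2(p+1)}\psi^5\ \le\ C(p)\Big(\|\psi\|_{L^\infty}^3AB+\|\psi\|_{L^\infty}AI+\|\psi\|_{L^\infty}^4\big(\textstyle\int_\Omega\varphi\big)^{2p+1}\textstyle\int_\Omega\varphi\psi\Big).
\]

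Combining this with $I^2\le J\int_\Omega\varphi^{2(p+1)}\psi^5$ produces a quadratic inequality $I^2\le aI+b$ with $a=C(p)\|\psi\|_{L^\infty}AJ$ and $b=C(p)\big(\|\psi\|_{L^\infty}^3AB+\|\psi\|_{L^\infty}^4(\int_\Omega\varphi)^{2p+1}\int_\Omega\varphi\psi\big)J$, hence $I\le a+\sqrt b$. The final step is a distribution of $a+\sqrt b$ by Young's inequality: $a$ and the $AB$-part of $\sqrt b$ yield $\eta B$ plus a multiple of $(\|\psi\|_{L^\infty}+\|\psi\|_{L^\infty}^3)AJ$; the $AI$-part reappears as $\tfrac12 I$ and is absorbed on the left; and the remaining contribution $\big(C(p)\|\psi\|_{L^\infty}^4(\int_\Omega\varphi)^{2p+1}\int_\Omega\varphi\psi\,J\big)^{1/2}$ splits by one more Young step as $\eta\int_\Omega\varphi\psi$ plus a multiple of $\|\psi\|_{L^\infty}^4(\int_\Omega\varphi)^{2p+1}J$ --- exactly the four terms of \eqref{4.7-1}. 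I expect the crux to be pinning down the interpolation in the third step: the value $\tau=\frac{2}{2p+1}$ and the three H\"older exponents are rigidly forced by the requirement that the leftover of the Young absorption be precisely $\|\psi\|_{L^\infty}^4(\int_\Omega\varphi)^{2p+1}\int_\Omega\varphi\psi$ (so that the last Young splitting returns exactly $\|\psi\|_{L^\infty}^4(\int_\Omega\varphi)^{2p+1}J$ together with $\eta\int_\Omega\varphi\psi$), and in verifying that none of the several Young steps generates a forbidden term such as $\eta\int_\Omega\varphi^{p+1}\psi$ or a bare multiple of $J$.
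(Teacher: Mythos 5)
Your proposal is correct, and its skeleton coincides with the paper's: the same weighting $|\nabla\psi|^2=\psi^{3/2}\cdot|\nabla\psi|^2\psi^{-3/2}$ with Cauchy--Schwarz against $J=\int_\Omega|\nabla\psi|^4\psi^{-3}$, the same application of the two-dimensional Sobolev inequality \eqref{5.2-2238} to $\rho=\varphi^{p+1}\psi^{5/2}$, and the same Cauchy--Schwarz splittings of the two summands of $\nabla\rho$ producing $\|\psi\|_{L^\infty}^{3}AB$ and $\|\psi\|_{L^\infty}AI$ (your exponents check out; in particular $2\tau(2p+1)=4$ gives exactly the $\|\psi\|_{L^\infty}^4$ in the last term). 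The genuine difference is in the treatment of the zero-order Sobolev term and the final bookkeeping: the paper first upgrades \eqref{5.2-2238} to the variant \eqref{4.7-2} with the $L^{1/(p+1)}$ quasinorm (by interpolating $\|\rho\|_{L^1}$ between $L^2$ and $L^{1/(p+1)}$ and absorbing), after which $\{\int_\Omega\varphi\psi^{5/(2(p+1))}\}^{p+1}\le\|\psi\|_{L^\infty}^2(\int_\Omega\varphi)^{(2p+1)/2}(\int_\Omega\varphi\psi)^{1/2}$ falls out directly and everything is distributed by Young with the term $\tfrac12\int_\Omega\varphi^{p+1}\psi|\nabla\psi|^2$ absorbed on the left; you instead keep the plain $L^1$ Sobolev inequality, interpolate $\int_\Omega\varphi^{p+1}\psi^{5/2}$ in situ against $\int_\Omega\varphi$, $\int_\Omega\varphi\psi$ and a sublinear power of $\int_\Omega\varphi^{2(p+1)}\psi^5$ (absorbed by Young, legitimate since all quantities are finite for $C^1$ data), and then close via the quadratic inequality $I^2\le aI+b$, $I\le a+\sqrt b$. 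These are equivalent executions of one idea --- your $\tau=\frac{2}{2p+1}$ interpolation is exactly the interpolation hidden in the paper's derivation of \eqref{4.7-2} --- so neither buys more generality; the paper's version isolates a reusable inequality, yours avoids stating it. One cosmetic remark: once you have passed to $I\le a+\sqrt b$, the ``$AI$-part reappears as $\tfrac12 I$ and is absorbed'' phrase is redundant (that absorption already happened in solving the quadratic); either that route or the paper's direct absorption of $\tfrac12 I$ works, but you should present only one of them.
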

\begin{proof}
According to \eqref{5.2-2238}, we can find $c_1>0$ fulfilling
\begin{align*}
\|\rho\|_{L^2(\Omega)} \leq c_1\|\nabla \rho\|_{L^1(\Omega)}+c_1\|\rho\|_{L^1(\Omega)} \quad \text {  for  } \rho \in W^{1,1}(\Omega).
\end{align*}
By H\"{o}lder's and Young's inequalities, for $p \geq 1$ we imply that
\begin{align*}
c_1\|\rho\|_{L^1(\Omega)} & \leq c_1\|\rho\|_{L^2(\Omega)}^{\frac{2 p}{2 p+1}}\|\rho\|_{L^{\frac{1}{p+1}}(\Omega)}^{\frac{1}{2 p+1}} \\
& =\left\{\frac{1}{2}\|\rho\|_{L^2(\Omega)}\right\}^{\frac{2 p}{2 p+1}} \cdot 2^{\frac{2 p}{2 p+1}} c_1\|\rho\|_{L^{\frac{1}{p+1}}(\Omega)}^{\frac{1}{2 p+1}}\\
& \leq \frac{1}{2}\|\rho\|_{L^2(\Omega)}+2^{2 p} c_1^{2 p+1}(\Omega)\|\rho\|_{L^{\frac{1}{p+1}}(\Omega)} \quad \text { for } \rho \in L^2(\Omega),
\end{align*}
it follows that
\begin{align}\label{4.7-2}
\|\rho\|_{L^2(\Omega)} \leq c_2(p)\|\nabla \rho\|_{L^1(\Omega)}+c_2(p)\|\rho\|_{L^{\frac{1}{p+1}}(\Omega)} \quad \text { for } \rho \in W^{1,1}(\Omega)
\end{align}
with $c_2(p):=\max \left\{2 c_1,\left(2 c_1\right)^{2 p+1}\right\}$. 
Using H\"{o}lder's inequality, we can estimate
\begin{align}\label{4.7-3}
\int_\Omega \varphi^{p+1} \psi |\nabla \psi|^2 \leq\left\{\int_\Omega \frac{|\nabla \psi|^4}{\psi^3}\right\}^{\frac{1}{2}} \cdot\left\{\int_\Omega \varphi^{2 (p+1)} \psi^{5}\right\}^{\frac{1}{2}}.
\end{align}
In view of \eqref{4.7-2}, we can control the second factor according to
\begin{align}\label{4.7-4}
\left\{\int_\Omega \varphi^{2 (p+1)} \psi^{5}\right\}^{\frac{1}{2}} = & \left\|\varphi^{p+1} \psi^{\frac{5}{2}}\right\|_{L^2(\Omega)} \nonumber\\
\leq &  c_2(p) \int_\Omega\left|(p+1) \varphi^{p} \psi^{\frac{5}{2}} \nabla \varphi+\frac{5 }{2} \varphi^{p+1}\psi^\frac{3}{2}\nabla \psi\right|\nonumber\\
&  + c_2(p) \cdot\left\{\int_\Omega \varphi \psi^{\frac{5}{2 (p+1)}}\right\}^{p+1} \nonumber\\
= &  (p+1) c_2(p) \int_\Omega \varphi^{p} \psi^\frac{5}{2}|\nabla \varphi|+\frac{5 c_2(p)}{2} \int_\Omega \varphi^{p+1}\psi^\frac{3}{2}|\nabla \psi| \nonumber\\
& + c_2(p) \cdot\left\{\int_\Omega \varphi \psi^{\frac{5}{2 (p+1)}}\right\}^{p+1}.
\end{align}
Applying H\"{o}lder inequality, we show that
\begin{align*}
(p+1) c_2(p) \int_G \varphi^{p} \psi^\frac{5}{2}|\nabla \varphi| \leq p c_2(p)\left\|\psi\right\|_{L^{\infty}(\Omega)}^\frac{3}{2} \cdot\left\{\int_\Omega \varphi^{p+1}\psi\right\}^{\frac{1}{2}} \cdot\left\{\int_\Omega \varphi^{p-1} \psi|\nabla \varphi|^2\right\}^{\frac{1}{2}}
\end{align*}
and
\begin{align*}
\frac{5 c_2(p)}{2} \int_\Omega \varphi^{p+1}\psi^\frac{3}{2}|\nabla \psi| \leq \frac{5 c_2(p)\left\|\psi\right\|_{L^{\infty}(\Omega)}^\frac{1}{2}}{2} \cdot\left\{\int_\Omega \varphi^{p+1}\psi\right\}^{\frac{1}{2}} \cdot\left\{\int_\Omega \varphi^{p+1} \psi|\nabla \psi|^2\right\}^{\frac{1}{2}}
\end{align*}
as well as
\begin{align*}
c_2(p)\left\{\int_\Omega \varphi \psi^{\frac{5}{2 (p+1)}}\right\}^{p+1} & \leq c_2(p)\left\|\psi \right\|_{L^{\infty}(\Omega)}^2  \cdot\left\{\int_\Omega(\varphi \psi)^{\frac{1}{2 (p+1)}} \cdot \varphi^{\frac{2 p+1}{2 (p+1)}}\right\}^{p+1} \\
& \leq c_2(p) \left\|\psi \right\|_{L^{\infty}(\Omega)}^2 \cdot\left\{\int_\Omega \varphi\right\}^{\frac{2 p+1}{2}} \cdot\left\{\int_\Omega \varphi \psi\right\}^{\frac{1}{2}}.
\end{align*}
Substituting \eqref{4.7-4} into \eqref{4.7-3} and Young’s inequality, we infer that for all $\eta > 0$,
\begin{align*}
\int_\Omega \varphi^{p+1} \psi |\nabla \psi|^2 \leq & p c_2(p)\left\|\psi\right\|_{L^{\infty}(\Omega)}^\frac{3}{2} \cdot\left\{\int_\Omega \frac{|\nabla \psi|^4}{\psi^3}\right\}^{\frac{1}{2}} \cdot\left\{\int_\Omega \varphi^{p+1}\psi\right\}^{\frac{1}{2}} \cdot\left\{\int_\Omega \varphi^{p-1} \psi|\nabla \varphi|^2\right\}^{\frac{1}{2}}\\
& +\frac{5 c_2(p)\left\|\psi\right\|_{L^{\infty}(\Omega)}^\frac{1}{2}}{2} \cdot \left\{\int_\Omega \frac{|\nabla \psi|^4}{\psi^3}\right\}^{\frac{1}{2}}\cdot\left\{\int_\Omega \varphi^{p+1}\psi\right\}^{\frac{1}{2}} \cdot\left\{\int_\Omega \varphi^{p+1} \psi|\nabla \psi|^2\right\}^{\frac{1}{2}} \\
& + c_2(p)\left\|\psi \right\|_{L^{\infty}(\Omega)}^2 \cdot\left\{\int_\Omega \frac{|\nabla \psi|^4}{\psi^3}\right\}^{\frac{1}{2}} \cdot\left\{\int_\Omega \varphi\right\}^{\frac{2 p+1}{2}} \cdot\left\{\int_\Omega \varphi \psi\right\}^{\frac{1}{2}} \\
\leq &   \frac{\eta}{2} \int_\Omega \varphi^{p-1} \psi|\nabla \varphi|^2+\frac{p^2 c_2^2(p)\left\|\psi\right\|_{L^{\infty}(\Omega)}^3}{2 \eta} \cdot \int_\Omega \varphi^{p+1} \psi \cdot \int_\Omega \frac{|\nabla \psi|^4}{\psi^3} \\
& +\frac{1}{2} \int_\Omega \varphi^{p+1} \psi |\nabla \psi|^2+\frac{25 c_2^2(p)\left\|\psi\right\|_{L^{\infty}(\Omega)}}{8} \cdot \int_\Omega \varphi^{p+1} \psi \cdot \int_\Omega \frac{|\nabla \psi|^4}{\psi^3}\\
& +\frac{\eta}{2} \int_\Omega \varphi \psi+\frac{c_2^2(p) \left\|\psi \right\|_{L^{\infty}(\Omega)}^4}{2 \eta} \cdot\left\{\int_\Omega \varphi\right\}^{2 p+1} \cdot \int_\Omega \frac{|\nabla \psi|^4}{\psi^3}.
\end{align*}
\end{proof}

It turns out that \eqref{-2.5} admits the following energy-like functional, which forms the core of our analysis.
\begin{lem}\label{lem-513-1912}
There exists $b>0$ such that
\begin{align}\label{512-1156}
\frac{d}{d t}\left\{4b \int_{\Omega} u_{\varepsilon} \ln u_{\varepsilon}+\int_{\Omega} \frac{\left|\nabla v_{\varepsilon}\right|^4}{v_{\varepsilon}^3}\right\} \leq & -b \int_{\Omega} v_{\varepsilon}\left|\nabla u_{\varepsilon}\right|^2- \int_{\Omega} u_{\varepsilon} v_{\varepsilon}^{-3}\left|\nabla v_{\varepsilon}\right|^4\nonumber\\
& -2 \int_{\Omega} v_{\varepsilon}^{-1}\left|\nabla v_{\varepsilon}\right|^2\left|D^2 \ln v_{\varepsilon}\right|^2\nonumber\\
& + 4b \chi^2 \int_{\Omega} u_{\varepsilon}^{2} v_{\varepsilon}\left|\nabla v_{\varepsilon}\right|^2+4 \ell b \int_{\Omega} u^2_{\varepsilon} v_{\varepsilon} \nonumber\\ 
& + 4 \ell b \int_{\Omega} u_{\varepsilon} v_{\varepsilon}
\end{align}
for all $t \in\left(0, T_{\max , \varepsilon}\right)$ and $\varepsilon \in(0,1)$.
\end{lem}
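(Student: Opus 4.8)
\emph{Approach.} The bracketed expression in \eqref{512-1156} is $4b\int_\Omega u_\varepsilon\ln u_\varepsilon+\int_\Omega v_\varepsilon^{-3}|\nabla v_\varepsilon|^4$, so the plan is to differentiate these two functionals separately along \eqref{-2.5}, add the resulting differential inequalities, and fix the constant $b>0$ only at the end. The positivity of $u_\varepsilon$ and $v_\varepsilon$ together with the $C^{2,1}$-regularity of $v_\varepsilon$ (Lemmas \ref{lemma-2.1} and \ref{n-lem-3.3a}) justify all integrations by parts.

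\emph{The entropy part.} Testing the first equation in \eqref{-2.5} by $1+\ln u_\varepsilon$ and integrating by parts twice using the no-flux condition yields
\[
\frac{d}{dt}\int_\Omega u_\varepsilon\ln u_\varepsilon
= -\int_\Omega v_\varepsilon|\nabla u_\varepsilon|^2
+\chi\int_\Omega u_\varepsilon v_\varepsilon\,\nabla u_\varepsilon\cdot\nabla v_\varepsilon
+\ell\int_\Omega u_\varepsilon v_\varepsilon(\ln u_\varepsilon+1).
\]
Bounding the cross term by Young's inequality as $\chi u_\varepsilon v_\varepsilon\nabla u_\varepsilon\cdot\nabla v_\varepsilon\le\tfrac{1}{4} v_\varepsilon|\nabla u_\varepsilon|^2+\chi^2 u_\varepsilon^2 v_\varepsilon|\nabla v_\varepsilon|^2$ and using $\ln\xi\le\xi$ for $\xi>0$, one obtains after multiplication by $4b$
\[
4b\,\frac{d}{dt}\int_\Omega u_\varepsilon\ln u_\varepsilon
\le -3b\int_\Omega v_\varepsilon|\nabla u_\varepsilon|^2
+4b\chi^2\int_\Omega u_\varepsilon^2 v_\varepsilon|\nabla v_\varepsilon|^2
+4\ell b\int_\Omega u_\varepsilon^2 v_\varepsilon
+4\ell b\int_\Omega u_\varepsilon v_\varepsilon,
\]
which supplies three of the terms on the right of \eqref{512-1156} and leaves a surplus $-3b\int_\Omega v_\varepsilon|\nabla u_\varepsilon|^2$ of dissipation in $\nabla u_\varepsilon$.

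\emph{The part involving $v_\varepsilon^{-3}|\nabla v_\varepsilon|^4$.} Differentiating in time, inserting $v_{\varepsilon t}=\Delta v_\varepsilon-u_\varepsilon v_\varepsilon$, and integrating by parts repeatedly with the pointwise identities $\nabla v_\varepsilon\cdot\nabla\Delta v_\varepsilon=\tfrac{1}{2}\Delta|\nabla v_\varepsilon|^2-|D^2 v_\varepsilon|^2$ and the expansion of $|D^2\ln v_\varepsilon|^2$ recorded before \eqref{n-3.11}, the boundary integral $\int_{\partial\Omega}v_\varepsilon^{-3}|\nabla v_\varepsilon|^2\,\partial_\nu|\nabla v_\varepsilon|^2$, nonpositive by convexity of $\Omega$ (\cite{1980-ARMA-Lions}), may be discarded, and one is left with a differential inequality whose dissipative part is $-\int_\Omega u_\varepsilon v_\varepsilon^{-3}|\nabla v_\varepsilon|^4$ together with a multiple $-c_0\int_\Omega v_\varepsilon^{-1}|\nabla v_\varepsilon|^2|D^2\ln v_\varepsilon|^2$ with $c_0>2$ (and a further nonpositive second-order term), the only remaining coupling to the first component being a term of the type $\int_\Omega v_\varepsilon^{-2}|\nabla v_\varepsilon|^2\,\nabla v_\varepsilon\cdot\nabla u_\varepsilon$.

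\emph{Closing the estimate.} This coupling term is split by Young's inequality: one part is absorbed, as $2b\int_\Omega v_\varepsilon|\nabla u_\varepsilon|^2$, into the surplus $-3b\int_\Omega v_\varepsilon|\nabla u_\varepsilon|^2$ from the entropy part, so that the combined coefficient of $\int_\Omega v_\varepsilon|\nabla u_\varepsilon|^2$ becomes exactly $-b$; the remaining surplus, of the form $\int_\Omega v_\varepsilon^{-5}|\nabla v_\varepsilon|^6$, is controlled by the excess $(c_0-2)\int_\Omega v_\varepsilon^{-1}|\nabla v_\varepsilon|^2|D^2\ln v_\varepsilon|^2$, using the functional inequalities of \cite[Lemma 3.3]{2012-CPDE-Winkler} and \cite[Lemma 3.4]{2022-NARWA-Winkler} (in the spirit of \eqref{n-3.11}--\eqref{n-3.12}) together with a two-dimensional Sobolev embedding. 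Adding the two estimates and choosing $b>0$ large enough for this last absorption to go through gives \eqref{512-1156}. I expect this final absorption to be the main obstacle: since \eqref{-2.5} is doubly degenerate — in particular $v_\varepsilon$ has no positive lower bound uniform in $\varepsilon$ and $t$, and the $u$-diffusion degenerates where $u_\varepsilon v_\varepsilon$ vanishes — the Young splittings of the $\nabla u_\varepsilon$-coupling and of the second-order error terms must be arranged so that only quantities genuinely dominated by the dissipation terms $\int_\Omega v_\varepsilon|\nabla u_\varepsilon|^2$, $\int_\Omega u_\varepsilon v_\varepsilon^{-3}|\nabla v_\varepsilon|^4$ and $\int_\Omega v_\varepsilon^{-1}|\nabla v_\varepsilon|^2|D^2\ln v_\varepsilon|^2$ survive, and it is precisely here that the restriction $\Omega\subset\mathbb R^2$ is used.
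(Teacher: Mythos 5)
Your proposal is correct and follows essentially the same route as the paper: test the first equation with $1+\ln u_{\varepsilon}$ and use Young plus $\ln\xi\le\xi$, combine with the differential inequality for $\int_{\Omega} v_{\varepsilon}^{-3}\left|\nabla v_{\varepsilon}\right|^4$ (which the paper simply quotes as \eqref{4.19-2} from \cite[Lemma~2.3]{2022-JDE-Li}, with dissipation coefficient $c_0=4$), split the coupling $-4\int_{\Omega} v_{\varepsilon}^{-2}\left|\nabla v_{\varepsilon}\right|^2\nabla u_{\varepsilon}\cdot\nabla v_{\varepsilon}$ by Young, absorb the resulting $\int_{\Omega} v_{\varepsilon}^{-5}\left|\nabla v_{\varepsilon}\right|^6$ into the $\left|D^2\ln v_{\varepsilon}\right|^2$-dissipation, and fix $b$ so that the leftover $\int_{\Omega} v_{\varepsilon}\left|\nabla u_{\varepsilon}\right|^2$ is dominated (the paper takes $b=c_1$, the constant in \eqref{4.19-111}). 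The only peripheral inaccuracy is in the last step's justification: the inequality you need is exactly \eqref{4.19-111}, taken in the paper from \cite[Lemma~3.4]{2022-DCDSSB-Winkler} (with $k=4$) rather than from the lemmas behind \eqref{n-3.12}, and it relies on convexity but not on a two-dimensional Sobolev embedding, so the restriction $\Omega\subset\mathbb{R}^2$ is not actually used in this lemma.
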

\begin{proof}
Based on \eqref{-2.8}, \eqref{n-3.1} and \eqref{n-3.2}, we have
\begin{align}\label{4.19-1}
\int_{\Omega} u_{\varepsilon}(t) \leq (1+\ell) K :=c_0(K) 
\end{align} 
for all $t \in\left(0, T_{\max , \varepsilon}\right)$ and $\varepsilon \in(0,1)$.
Using Lemma 3.4 with $k=4$ in \cite{2022-DCDSSB-Winkler}, we see that there exists $c_1>0$ such that
\begin{align}\label{4.19-111}
\int_{\Omega} \frac{|\nabla \phi|^6}{\phi^5} \leq c_1 \int_{\Omega} \phi^{-1}|\nabla \phi|^2\left|D^2 \ln \phi\right|^2.
\end{align}
Recall that Lemma 2.3 in \cite{2022-JDE-Li} implies that
\begin{align}\label{4.19-2}
\frac{d}{d t} \int_{\Omega} \frac{\left|\nabla v_{\varepsilon}\right|^4}{v_{\varepsilon}^3}+4 \int_{\Omega} v_{\varepsilon}^{-1}\left|\nabla v_{\varepsilon}\right|^2\left|D^2 \ln v_{\varepsilon}\right|^2 \leq-\int_{\Omega} u_{\varepsilon} v_{\varepsilon}^{-3}\left|\nabla v_{\varepsilon}\right|^4-4 \int_{\Omega} v_{\varepsilon}^{-2}\left|\nabla v_{\varepsilon}\right|^2\left(\nabla u_{\varepsilon} \cdot \nabla v_{\varepsilon}\right)
\end{align}
for all $t \in(0, T_{\max , \varepsilon})$ and $\varepsilon \in(0,1)$. Invoking Young's inequality and \eqref{4.19-111}, we deduce that
\begin{align}\label{4.19-3-a}
-4 \int_{\Omega} v_{\varepsilon}^{-2}\left|\nabla v_{\varepsilon}\right|^2\left(\nabla u_{\varepsilon} \cdot \nabla v_{\varepsilon}\right) & \leq \frac{2}{c_1} \int_{\Omega} \frac{\left|\nabla v_{\varepsilon}\right|^6}{v_{\varepsilon}^5}+2 c_1 \int_{\Omega} v_{\varepsilon}\left|\nabla u_{\varepsilon}\right|^2 \nonumber\\
& \leq 2 \int_{\Omega} v_{\varepsilon}^{-1}\left|\nabla v_{\varepsilon}\right|^2\left|D^2 \ln v_{\varepsilon}\right|^2+2 c_1 \int_{\Omega} v_{\varepsilon}\left|\nabla u_{\varepsilon}\right|^2
\end{align}
for all $t \in(0, T_{\max , \varepsilon})$ and $\varepsilon \in(0,1)$. Multiplying the first equation in \eqref{-2.5} by $1+\ln u_{\varepsilon}$, we use Cauchy-Schwarz inequality to infer that
\begin{align}\label{4.19-31}
\frac{d}{d t} \int_{\Omega} u_{\varepsilon} \ln u_{\varepsilon}&+\int_{\Omega} v_{\varepsilon}\left|\nabla u_{\varepsilon}\right|^2 \nonumber\\
&  = \chi \int_{\Omega} u_{\varepsilon} v_{\varepsilon} \nabla u_{\varepsilon} \cdot \nabla v_{\varepsilon}+\ell \int_{\Omega} u_{\varepsilon} v_{\varepsilon} \ln u_{\varepsilon} +\ell \int_{\Omega} u_{\varepsilon} v_{\varepsilon}\nonumber\\
& \leq   \frac{1}{4} \int_{\Omega} v_{\varepsilon}\left|\nabla u_{\varepsilon}\right|^2+ \chi^2 \int_{\Omega} u_{\varepsilon}^{2} v_{\varepsilon}\left|\nabla v_{\varepsilon}\right|^2 + \ell \int_{\Omega} u_{\varepsilon} v_{\varepsilon} \ln u_{\varepsilon} +\ell \int_{\Omega} u_{\varepsilon} v_{\varepsilon}
\end{align}
for all $t \in(0, T_{\max , \varepsilon})$ and $\varepsilon \in(0,1)$. We conclude from \eqref{4.19-2}-\eqref{4.19-31} and $\ln \xi \leq \xi$ for all $\xi>0$ that
\begin{align*}
& \frac{d}{d t}\left\{4 c_1 \int_{\Omega} u_{\varepsilon} \ln u_{\varepsilon}+\int_{\Omega} \frac{\left|\nabla v_{\varepsilon}\right|^4}{v_{\varepsilon}^3}\right\} \nonumber\\
\leq & -3 c_1 \int_{\Omega} v_{\varepsilon}\left|\nabla u_{\varepsilon}\right|^2 +4c_1 \chi^2 \int_{\Omega} u_{\varepsilon}^{2} v_{\varepsilon}\left|\nabla v_{\varepsilon}\right|^2+4 \ell c_1 \int_{\Omega} u_{\varepsilon} v_{\varepsilon} \ln u_{\varepsilon} \nonumber\\
&+4 \ell c_1 \int_{\Omega} u_{\varepsilon} v_{\varepsilon}-\int_{\Omega} u_{\varepsilon} v_{\varepsilon}^{-3}\left|\nabla v_{\varepsilon}\right|^4-2 \int_{\Omega} v_{\varepsilon}^{-1}\left|\nabla v_{\varepsilon}\right|^2\left|D^2 \ln v_{\varepsilon}\right|^2\nonumber\\
& +2 c_1 \int_{\Omega} v_{\varepsilon}\left|\nabla u_{\varepsilon}\right|^2 \nonumber\\
\leq & -c_1 \int_{\Omega} v_{\varepsilon}\left|\nabla u_{\varepsilon}\right|^2- \int_{\Omega} u_{\varepsilon} v_{\varepsilon}^{-3}\left|\nabla v_{\varepsilon}\right|^4 -2 \int_{\Omega} v_{\varepsilon}^{-1}\left|\nabla v_{\varepsilon}\right|^2\left|D^2 \ln v_{\varepsilon}\right|^2\nonumber\\
& + 4c_1 \chi^2 \int_{\Omega} u_{\varepsilon}^{2} v_{\varepsilon}\left|\nabla v_{\varepsilon}\right|^2+4 \ell c_1 \int_{\Omega} u^2_{\varepsilon} v_{\varepsilon}  + 4 \ell c_1 \int_{\Omega} u_{\varepsilon} v_{\varepsilon} 
\end{align*}
for all $t \in(0, T_{\max , \varepsilon})$ and $\varepsilon \in(0,1)$.
\end{proof}

\begin{lem}\label{4.10-lem-1-xx}
Let $K>0$. Then there exists $C(K)>0$ such that
\begin{align}\label{513-1910}
\int_0^{T_{\max , \varepsilon}} \int_{\Omega} u^2_{\varepsilon} v_{\varepsilon} \leq C(K)  \quad \text { for all } \varepsilon \in(0,1).
\end{align}
\end{lem}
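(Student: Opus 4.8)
The plan is to obtain \eqref{513-1910} directly from the functional inequality of Lemma \ref{lemma-4.23-23:32}, used pointwise in time with the exponent $p=1$, together with the a priori bounds already assembled. Fix $\varepsilon\in(0,1)$. For each $t\in(0,T_{\max,\varepsilon})$ the functions $\varphi:=u_\varepsilon(\cdot,t)$ and $\psi:=v_\varepsilon(\cdot,t)$ lie in $C^1(\bar\Omega)$ and are positive in $\bar\Omega$ by Lemma \ref{lemma-2.1}, hence admissible in Lemma \ref{lemma-4.23-23:32}, which with $p=1$, $\varphi=u_\varepsilon$ and $\psi=v_\varepsilon$ yields
\begin{align*}
\int_\Omega u_\varepsilon^2 v_\varepsilon \leq C(1)\left\{\int_\Omega\frac{u_\varepsilon}{v_\varepsilon}|\nabla v_\varepsilon|^2+\int_\Omega\frac{v_\varepsilon}{u_\varepsilon}|\nabla u_\varepsilon|^2+\int_\Omega u_\varepsilon v_\varepsilon\right\}\cdot\int_\Omega u_\varepsilon
\end{align*}
for all $t\in(0,T_{\max,\varepsilon})$.

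The last factor is harmless: by \eqref{-2.8} and the initial-data restrictions \eqref{n-3.1}, \eqref{n-3.2}, one has $\int_\Omega u_\varepsilon(t)\leq c_0(K)$ uniformly in $\varepsilon$ and $t$, as already recorded in \eqref{4.19-1}. Inserting this bound and integrating over $t\in(0,T_{\max,\varepsilon})$, I would then estimate the three resulting space-time integrals on the right one at a time: $\int_0^{T_{\max,\varepsilon}}\!\int_\Omega\frac{u_\varepsilon}{v_\varepsilon}|\nabla v_\varepsilon|^2$ by \eqref{n-3.9} of Lemma \ref{n-lem-3.3a}, $\int_0^{T_{\max,\varepsilon}}\!\int_\Omega\frac{v_\varepsilon}{u_\varepsilon}|\nabla u_\varepsilon|^2$ by \eqref{n-3.4} of Lemma \ref{n-lem-3.2a}, and $\int_0^{T_{\max,\varepsilon}}\!\int_\Omega u_\varepsilon v_\varepsilon$ by \eqref{-2.10} taken with $t_0=0$ (the closely related space-time bounds \eqref{-3.4} of Lemma \ref{lem-3.3} and \eqref{n-3.5} being also available). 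As each of these is bounded by a constant depending only on $K$, adding the three contributions gives \eqref{513-1910}.

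I do not expect a genuine obstruction here: the real work has been front-loaded into Lemma \ref{lemma-4.23-23:32} --- which is where the two-dimensional Sobolev embedding is used --- and into the earlier a priori estimates, so only a few routine checks remain. First, the hypotheses \eqref{n-3.1}, \eqref{n-3.2}, \eqref{n-3.8} of Lemmas \ref{n-lem-3.2a} and \ref{n-lem-3.3a} are met once $K$ is chosen large enough, since \eqref{th1-2} forces $\int_\Omega\ln u_0$, $\int_\Omega u_0$, $\int_\Omega v_0$, $\int_\Omega|\nabla v_0|^2$ and $\int_\Omega\frac{|\nabla v_0|^2}{v_0}$ to be finite. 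Second, the regularization only replaces $u_0$ by $u_{0\varepsilon}=u_0+\varepsilon$ while $v_{0\varepsilon}=v_0$, so the $\varepsilon$-correction to $\int_\Omega u_{0\varepsilon}$ is the harmless term $\varepsilon|\Omega|\leq|\Omega|$. The feature worth stressing is that, unlike in most precedents, the three right-hand quantities above are integrable over the \emph{whole} interval $(0,T_{\max,\varepsilon})$ uniformly in $\varepsilon$ --- thanks to \eqref{-2.10}, \eqref{n-3.2} and \eqref{n-3.9} --- so that \eqref{513-1910} genuinely holds up to $T_{\max,\varepsilon}$ and carries the implicit decay information highlighted in the paragraph preceding Lemma \ref{lemma-4.23-23:32}.
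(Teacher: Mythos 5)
Your argument is correct and coincides with the paper's own proof: both apply Lemma \ref{lemma-4.23-23:32} with $p=1$ to $u_\varepsilon$ and $v_\varepsilon$, control the factor $\int_\Omega u_\varepsilon$ via the mass bound \eqref{4.19-1}, and then integrate in time using \eqref{n-3.4}, \eqref{n-3.9} and \eqref{-2.10} (together with \eqref{-2.9}) to bound the three space-time integrals uniformly in $\varepsilon$. No further comment is needed.
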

\begin{proof}
An application of \eqref{4.19-1} and Lemma \ref{lemma-4.23-23:32} witn $p =1$ provides $c_1:=c_1(K)>0$ such that
\begin{align}\label{513-1907}
\int_{\Omega} u_{\varepsilon}^{2} v_{\varepsilon} \leq c_1 \left\{\int_{\Omega} \frac{u_{\varepsilon}}{v_{\varepsilon}}|\nabla v_{\varepsilon}|^2+\int_{\Omega} \frac{v_{\varepsilon}}{u_{\varepsilon}}|\nabla u_{\varepsilon}|^2+ \int_{\Omega} u_{\varepsilon} v_{\varepsilon}  \right\} \cdot \int_{\Omega} u_{\varepsilon}\nonumber\\
\leq c_1 c_0(K) \cdot \left\{\int_{\Omega} \frac{u_{\varepsilon}}{v_{\varepsilon}}|\nabla v_{\varepsilon}|^2+\int_{\Omega} \frac{v_{\varepsilon}}{u_{\varepsilon}}|\nabla u_{\varepsilon}|^2+ \int_{\Omega} u_{\varepsilon} v_{\varepsilon}  \right\}
\end{align}
for all $t \in(0, T_{\max , \varepsilon})$ and $\varepsilon \in(0,1)$. Combining \eqref{-2.9}, \eqref{-2.10} with \eqref{n-3.4} and \eqref{n-3.9}, we see that with $c_2:=c_2(K)>0$,
\begin{align*}
\int_0^{T_{\max , \varepsilon}}  \int_{\Omega} \frac{u_{\varepsilon}}{v_{\varepsilon}}|\nabla v_{\varepsilon}|^2+\int_0^{T_{\max , \varepsilon}}  \int_{\Omega} \frac{v_{\varepsilon}}{u_{\varepsilon}}|\nabla u_{\varepsilon}|^2+ \int_0^{T_{\max , \varepsilon}}  \int_{\Omega} u_{\varepsilon} v_{\varepsilon} \leq c_2
\end{align*}
for all $\varepsilon \in(0,1)$. This together with \eqref{513-1907} yields \eqref{513-1910}.
\end{proof}

We are now in a position to derive some boundedness features from the inequality \eqref{512-1156}
\begin{lem}\label{4.10-lem-1}
Let $K>0$. Then there exists $C(K)>0$ such that if $u_0$ and $v_0$ satisfy \eqref{th1-2} and
\begin{align}\label{4.23-n3.8}
\int_{\Omega} u_{0}\ln u_{0}  \leq K
\end{align}
as well as 
\begin{align}\label{4.23-n3.9}
\int_{\Omega} \frac{|\nabla v_{0 }|^4}{v_0^3} \leq K, 
\end{align}
then we have
\begin{align}\label{4.23-n3.101}
\int_{\Omega} \frac{|\nabla v_{\varepsilon}|^4}{v_{\varepsilon}^3} \leq C(K)\quad \text { for all } t \in(0, T_{\max , \varepsilon}) \text { and } \varepsilon \in(0,1)
\end{align}
\begin{align}\label{4.23-n3.10}
\int_0^{T_{\max , \varepsilon}}  \int_{\Omega} v_{\varepsilon}\left|\nabla u_{\varepsilon}\right|^2 \leq C(K) \quad \text { for all } \varepsilon \in(0,1)
\end{align}
\begin{align}\label{4.23-n3.101-q}
\int_0^{T_{\max , \varepsilon}}  \int_{\Omega} u_{\varepsilon} v_{\varepsilon}^{-3}\left|\nabla v_{\varepsilon}\right|^4 \leq C(K) \quad \text { for all } \varepsilon \in(0,1)
\end{align}
and
\begin{align}\label{5.3-13.46} 
\int_0^{T_{\max , \varepsilon}}  \int_{\Omega} \frac{\left|\nabla v_{\varepsilon}\right|^6}{v_{\varepsilon}^5} \leq C(K) \quad \text { for all } \varepsilon \in(0,1).
\end{align}
\end{lem}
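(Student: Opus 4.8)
The argument is driven by the energy inequality \eqref{512-1156} of Lemma \ref{lem-513-1912}. Introduce the functional
\[
\mathcal{F}_\varepsilon(t):=4b\int_\Omega u_\varepsilon\ln u_\varepsilon+\int_\Omega\frac{|\nabla v_\varepsilon|^4}{v_\varepsilon^3},
\]
with $b>0$ as in Lemma \ref{lem-513-1912}; by the regularity in Lemma \ref{lemma-2.1} this is absolutely continuous on $(0,T_{\max,\varepsilon})$, and since $\xi\ln\xi\ge -e^{-1}$ we have $\mathcal{F}_\varepsilon(t)\ge -4be^{-1}|\Omega|=:-c_*$, hence $\int_\Omega v_\varepsilon^{-3}|\nabla v_\varepsilon|^4\le\mathcal{F}_\varepsilon(t)+c_*$. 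The plan is to absorb every positive term on the right of \eqref{512-1156} — above all the cross-diffusion contribution $4b\chi^2\int_\Omega u_\varepsilon^2v_\varepsilon|\nabla v_\varepsilon|^2$ — into the dissipative terms together with a Gr\"onwall structure. For the cross-diffusion term I apply Lemma \ref{4.7-lem-2-1} with $\varphi=u_\varepsilon(\cdot,t)$, $\psi=v_\varepsilon(\cdot,t)$, $p=1$ and some $\eta>0$, using $\|v_\varepsilon(\cdot,t)\|_{L^\infty(\Omega)}\le\|v_0\|_{L^\infty(\Omega)}$ from \eqref{-2.9} and $\int_\Omega u_\varepsilon(\cdot,t)\le c_0(K)$ from \eqref{4.19-1}; this yields, with $c_1=c_1(K)>0$,
\[
4b\chi^2\!\int_\Omega u_\varepsilon^2v_\varepsilon|\nabla v_\varepsilon|^2\le 4b\chi^2\eta\!\int_\Omega v_\varepsilon|\nabla u_\varepsilon|^2+4b\chi^2\eta\!\int_\Omega u_\varepsilon v_\varepsilon+c_1\Big(\!\int_\Omega u_\varepsilon^2v_\varepsilon\Big)\!\int_\Omega\frac{|\nabla v_\varepsilon|^4}{v_\varepsilon^3}+c_1\!\int_\Omega\frac{|\nabla v_\varepsilon|^4}{v_\varepsilon^3}.
\]

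Fixing $\eta$ so small that $4b\chi^2\eta\le b/2$, the first term is swallowed by $-b\int_\Omega v_\varepsilon|\nabla u_\varepsilon|^2$. The delicate point is the last term $c_1\int_\Omega v_\varepsilon^{-3}|\nabla v_\varepsilon|^4$: its coefficient is a genuine constant (it comes from $\{\int_\Omega u_\varepsilon\}^3\le c_0(K)^3$), so it must not be passed into the Gr\"onwall coefficient, since that would contribute a term $\propto T_{\max,\varepsilon}$ and spoil any bound over a possibly infinite time interval. Instead I absorb it into the Hessian dissipation $-2\int_\Omega v_\varepsilon^{-1}|\nabla v_\varepsilon|^2|D^2\ln v_\varepsilon|^2$: by \eqref{4.19-111} (i.e.\ \cite[Lemma 3.4]{2022-DCDSSB-Winkler}) this dominates $c_2^{-1}\int_\Omega v_\varepsilon^{-5}|\nabla v_\varepsilon|^6$ for some $c_2>0$, while H\"older's inequality gives $\int_\Omega v_\varepsilon^{-3}|\nabla v_\varepsilon|^4\le\big(\int_\Omega v_\varepsilon^{-5}|\nabla v_\varepsilon|^6\big)^{2/3}\big(\int_\Omega v_\varepsilon\big)^{1/3}$; a Young step then bounds $c_1\int_\Omega v_\varepsilon^{-3}|\nabla v_\varepsilon|^4$ by $c_2^{-1}\int_\Omega v_\varepsilon^{-5}|\nabla v_\varepsilon|^6+c_3(K)\int_\Omega v_\varepsilon$, a remainder which is integrable in time thanks to \eqref{n-3.5}. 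Collecting everything and writing $a_\varepsilon(t):=c_1\int_\Omega u_\varepsilon^2v_\varepsilon$ and $g_\varepsilon(t):=4b\chi^2\eta\int_\Omega u_\varepsilon v_\varepsilon+4\ell b\int_\Omega u_\varepsilon^2v_\varepsilon+4\ell b\int_\Omega u_\varepsilon v_\varepsilon+c_3(K)\int_\Omega v_\varepsilon$, I reach
\[
\mathcal{F}_\varepsilon'(t)+\tfrac{b}{2}\!\int_\Omega v_\varepsilon|\nabla u_\varepsilon|^2+\!\int_\Omega u_\varepsilon v_\varepsilon^{-3}|\nabla v_\varepsilon|^4+\tfrac{1}{c_2}\!\int_\Omega\frac{|\nabla v_\varepsilon|^6}{v_\varepsilon^5}\le a_\varepsilon(t)\!\int_\Omega\frac{|\nabla v_\varepsilon|^4}{v_\varepsilon^3}+g_\varepsilon(t),
\]
where $\int_0^{T_{\max,\varepsilon}}a_\varepsilon\le C(K)$ by Lemma \ref{4.10-lem-1-xx}, and $\int_0^{T_{\max,\varepsilon}}g_\varepsilon\le C(K)$ by Lemma \ref{4.10-lem-1-xx}, \eqref{-2.10} and \eqref{n-3.5}.

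Dropping the nonnegative dissipation terms and setting $\mathcal{G}_\varepsilon:=\mathcal{F}_\varepsilon+c_*\ge\int_\Omega v_\varepsilon^{-3}|\nabla v_\varepsilon|^4\ge0$, the last display becomes $\mathcal{G}_\varepsilon'\le a_\varepsilon\mathcal{G}_\varepsilon+g_\varepsilon$, so Gr\"onwall's lemma gives $\mathcal{G}_\varepsilon(t)\le\big(\mathcal{G}_\varepsilon(0)+\int_0^{T_{\max,\varepsilon}}g_\varepsilon\big)\exp\big(\int_0^{T_{\max,\varepsilon}}a_\varepsilon\big)$. Here $\mathcal{G}_\varepsilon(0)\le C(K)$: indeed $\int_\Omega v_{0\varepsilon}^{-3}|\nabla v_{0\varepsilon}|^4=\int_\Omega v_0^{-3}|\nabla v_0|^4\le K$ by \eqref{4.23-n3.9}, while the elementary inequality $(a+\varepsilon)\ln(a+\varepsilon)\le a\ln a+\ln(a+1)+1$ for $a\ge0$, $\varepsilon\in(0,1)$, together with \eqref{4.23-n3.8} and \eqref{n-3.1}, controls $\int_\Omega u_{0\varepsilon}\ln u_{0\varepsilon}$. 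Hence $\int_\Omega v_\varepsilon^{-3}|\nabla v_\varepsilon|^4\le\mathcal{G}_\varepsilon(t)\le C(K)$, which is \eqref{4.23-n3.101}. Finally, integrating the displayed differential inequality over $(0,T_{\max,\varepsilon})$, using $\mathcal{F}_\varepsilon\ge -c_*$, $\mathcal{F}_\varepsilon(0)\le C(K)$, the bound just obtained to estimate $\int_0^{T_{\max,\varepsilon}}a_\varepsilon(t)\int_\Omega v_\varepsilon^{-3}|\nabla v_\varepsilon|^4\,dt\le C(K)\int_0^{T_{\max,\varepsilon}}a_\varepsilon\le C(K)$, and $\int_0^{T_{\max,\varepsilon}}g_\varepsilon\le C(K)$, I obtain the space-time estimates \eqref{4.23-n3.10}, \eqref{4.23-n3.101-q} and \eqref{5.3-13.46}.

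The one genuinely nonroutine step is the handling of the constant-coefficient residue $c_1\int_\Omega v_\varepsilon^{-3}|\nabla v_\varepsilon|^4$ produced by Lemma \ref{4.7-lem-2-1}: naively folding it into the Gr\"onwall factor would destroy integrability up to the (possibly infinite) maximal time, so absorbing it into the Hessian dissipation via \eqref{4.19-111} is essential, and for this to leave only a time-integrable remainder one crucially needs $\int_0^{T_{\max,\varepsilon}}\int_\Omega v_\varepsilon$ already under control from Lemma \ref{n-lem-3.2a} — exactly the "implicit decay" noted in the paragraph preceding Lemma \ref{lemma-4.23-23:32}. Everything else — the choice of $\eta$, the bookkeeping of the lower-order terms $\int_\Omega u_\varepsilon v_\varepsilon$ and $\int_\Omega u_\varepsilon^2 v_\varepsilon$, and the Gr\"onwall together with the final time integration — is mechanical.
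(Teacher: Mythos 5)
Your argument is correct, and its skeleton coincides with the paper's: the same functional $4b\int_\Omega u_\varepsilon\ln u_\varepsilon+\int_\Omega v_\varepsilon^{-3}|\nabla v_\varepsilon|^4$ from \eqref{512-1156}, the same application of Lemma \ref{4.7-lem-2-1} with $p=1$ and small $\eta$, and the same Gr\"onwall structure whose coefficient $\propto\int_\Omega u_\varepsilon^2v_\varepsilon$ is time-integrable by Lemma \ref{4.10-lem-1-xx}. The single place where you diverge is the constant-coefficient residue $c_1\|v_0\|_{L^\infty(\Omega)}^4\{\int_\Omega u_\varepsilon\}^3\int_\Omega v_\varepsilon^{-3}|\nabla v_\varepsilon|^4$: you absorb it into the Hessian dissipation via \eqref{4.19-111} together with the H\"older--Young step $\int_\Omega v_\varepsilon^{-3}|\nabla v_\varepsilon|^4\le(\int_\Omega v_\varepsilon^{-5}|\nabla v_\varepsilon|^6)^{2/3}(\int_\Omega v_\varepsilon)^{1/3}$, leaving a remainder controlled by \eqref{n-3.5}, whereas the paper simply places this term in the inhomogeneous (forcing) part of the Gr\"onwall inequality and invokes the already established space-time bound \eqref{n-3.10} from Lemma \ref{n-lem-3.3a} — so the dichotomy you describe ("Gr\"onwall coefficient or Hessian absorption") misses this third, simpler option, though your route is perfectly valid and has the side benefit of producing \eqref{5.3-13.46} directly from the retained $\int_\Omega v_\varepsilon^{-5}|\nabla v_\varepsilon|^6$ term rather than passing through the $\int_\Omega v_\varepsilon^{-1}|\nabla v_\varepsilon|^2|D^2\ln v_\varepsilon|^2$ bound. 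You are also more careful than the paper on two minor points: the lower bound $\int_\Omega u_\varepsilon\ln u_\varepsilon\ge -e^{-1}|\Omega|$ needed to convert the bound on the functional into \eqref{4.23-n3.101}, and the control of $\int_\Omega u_{0\varepsilon}\ln u_{0\varepsilon}$ with $u_{0\varepsilon}=u_0+\varepsilon$ in terms of \eqref{4.23-n3.8}.
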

\begin{proof}
From Lemma \ref{lem-513-1912}, we know that for all $t \in(0, T_{\max , \varepsilon})$ and $\varepsilon \in(0,1)$
\begin{align*}
\frac{d}{d t}\left\{4 b \int_{\Omega} u_{\varepsilon} \ln u_{\varepsilon} +\int_{\Omega} \frac{\left|\nabla v_{\varepsilon}\right|^4}{v_{\varepsilon}^3}\right\}  & + b \int_{\Omega} v_{\varepsilon}\left|\nabla u_{\varepsilon}\right|^2 
+ \int_{\Omega} u_{\varepsilon} v_{\varepsilon}^{-3}\left|\nabla v_{\varepsilon}\right|^4 + 2 \int_{\Omega} v_{\varepsilon}^{-1}\left|\nabla v_{\varepsilon}\right|^2\left|D^2 \ln v_{\varepsilon}\right|^2\nonumber\\
& \leq  4b \chi^2 \int_{\Omega} u_{\varepsilon}^{2} v_{\varepsilon}\left|\nabla v_{\varepsilon}\right|^2+4 \ell b \int_{\Omega} u^2_{\varepsilon} v_{\varepsilon} + 4 \ell b \int_{\Omega} u_{\varepsilon} v_{\varepsilon}
\end{align*}
holds with a constant $b>0$. Since an application of Lemma \ref{4.7-lem-2-1} to $\eta =\min \left\{\frac{1}{8 \chi^2}, 1\right\}$ provides $c_1:=c_1(K)>0$,
\begin{align*}
4b \chi^2  \int_\Omega u_{\varepsilon}^{2} v_{\varepsilon} |\nabla \psi|^2
\leq & \frac{b}{2} \int_\Omega  v_{\varepsilon}|\nabla u_{\varepsilon}|^2+ \int_{\Omega} u_{\varepsilon} v_{\varepsilon} \nonumber\\
& +  c_1\cdot \left\{\left\|v_{\varepsilon} \right\|_{L^{\infty}(\Omega)}+\left\|v_{\varepsilon} \right\|^3_{L^{\infty}(\Omega)}\right\} \cdot \int_{\Omega} u_{\varepsilon}^{2} v_{\varepsilon}  \cdot \int_\Omega \frac{|\nabla v_{\varepsilon}|^4}{v_{\varepsilon}^3} \nonumber\\
&+c_1 \cdot \left\|v_{\varepsilon} \right\|^4_{L^{\infty}(\Omega)} \cdot\left\{\int_\Omega u_{\varepsilon}\right\}^{3} \cdot \int_\Omega \frac{|\nabla v_{\varepsilon}|^4}{v_{\varepsilon}^3} \quad \text { for all } t \in(0, T_{\max , \varepsilon}) \text { and } \varepsilon \in(0,1),
\end{align*}
thanks to \eqref{-2.9} and \eqref{4.19-1} this entails that
\begin{align}\label{4.24-2050}
\frac{d}{d t}\left\{4 b \int_{\Omega} u_{\varepsilon} \ln u_{\varepsilon} +\int_{\Omega} \frac{\left|\nabla v_{\varepsilon}\right|^4}{v_{\varepsilon}^3}\right\} & + \frac{b}{2} \int_{\Omega} v_{\varepsilon}\left|\nabla u_{\varepsilon}\right|^2 + \int_{\Omega} u_{\varepsilon} v_{\varepsilon}^{-3}\left|\nabla v_{\varepsilon}\right|^4 + 2 \int_{\Omega} v_{\varepsilon}^{-1}\left|\nabla v_{\varepsilon}\right|^2\left|D^2 \ln v_{\varepsilon}\right|^2\nonumber\\
& \leq  c_1\cdot \left(\left\|v_{0} \right\|_{L^{\infty}(\Omega)}+\left\|v_{0} \right\|^3_{L^{\infty}(\Omega)}\right) \cdot \int_{\Omega} u_{\varepsilon}^{2} v_{\varepsilon}  \cdot \int_\Omega \frac{|\nabla v_{\varepsilon}|^4}{v_{\varepsilon}^3} \nonumber\\
&+c_1 \cdot \left\|v_{0} \right\|^4_{L^{\infty}(\Omega)} \cdot c^3_0(K) \cdot \int_\Omega \frac{|\nabla v_{\varepsilon}|^4}{v_{\varepsilon}^3}\nonumber\\
& +4 \ell b \int_{\Omega} u^2_{\varepsilon} v_{\varepsilon} + (4 \ell b+1) \int_{\Omega} u_{\varepsilon} v_{\varepsilon} 
\end{align}
for all $t \in(0, T_{\max , \varepsilon})$ and $\varepsilon \in(0,1)$.  For each $\varepsilon \in(0,1)$, the functions given by
\begin{align*}
f_{\varepsilon}(t):= 4b \int_{\Omega} u_{\varepsilon} \ln u_{\varepsilon}+\int_{\Omega} \frac{\left|\nabla v_{\varepsilon}\right|^4}{v_{\varepsilon}^3}
\end{align*}
as well as
\begin{align*}
m_{\varepsilon}(t):=c_1 \cdot \left\|v_{0} \right\|^4_{L^{\infty}(\Omega)} \cdot c^3_0(K) \cdot \int_\Omega \frac{|\nabla v_{\varepsilon}|^4}{v_{\varepsilon}^3} +4 \ell b \int_{\Omega} u^2_{\varepsilon} v_{\varepsilon} + (4 \ell b+1) \int_{\Omega} u_{\varepsilon} v_{\varepsilon}  
\end{align*}
and
\begin{align*}
k_{\varepsilon}(t):=& c_1\cdot \left(\left\|v_{0} \right\|_{L^{\infty}(\Omega)}+\left\|v_{0} \right\|^3_{L^{\infty}(\Omega)}\right) \cdot \int_{\Omega} u_{\varepsilon}^{2} v_{\varepsilon}
\end{align*}
for all $t \in(0, T_{\max , \varepsilon})$, thus satisfy
\begin{align*}
f_{\varepsilon}^{\prime}(t) \leq m_{\varepsilon}(t) + k_{\varepsilon}(t) f_{\varepsilon}(t) \quad \text { for all } t \in\left(0, T_{\max , \varepsilon} \right), 
\end{align*}
which upon an ODE comparison argument implies that
\begin{align}\label{513-2009}
f_{\varepsilon}(t) & \leq f_{\varepsilon}(0) e^{\int_0^t k_{\varepsilon}(s) d s}+\int_0^t e^{\int_s^t k_{\varepsilon}(\sigma) d \sigma} m_{\varepsilon}(s) d s
\quad \text { for all } t \in\left(0, T_{\max , \varepsilon}\right). 
\end{align}
Since 
\begin{align*}
\int_s^t k_{\varepsilon}(\sigma) d \sigma \leq  c_2(K) \quad \text { for all } t \in\left(0, T_{\max , \varepsilon}\right), s \in[0, t) \text {  and   } \varepsilon \in(0,1)
\end{align*}
by Lemma \ref{4.10-lem-1-xx}, and since
\begin{align*}
\int_0^t m_{\varepsilon}(s) d s \leq  c_3(K) \quad \text { for all } t \in\left(0, T_{\max , \varepsilon}\right) \text { and } \varepsilon \in(0,1)
\end{align*}
due to \eqref{-2.10}, \eqref{n-3.10} and Lemma \ref{4.10-lem-1-xx}, from \eqref{513-2009}, \eqref{4.23-n3.8} and \eqref{4.23-n3.9} we thus obtain 
\begin{align*}
f_{\varepsilon}(t) & \leq  (4b+1)K \cdot e^{c_2(K)}
+c_3(K) e^{c_2(K)}\quad \text { for all } t \in\left(0, T_{\max , \varepsilon}\right) \text { and } \varepsilon \in(0,1). 
\end{align*}
This completes the proof of \eqref{4.23-n3.101}.
By a direct integration in \eqref{4.24-2050}, we obtain 
\begin{align*}
\int_0^{T_{\max , \varepsilon}}  \int_{\Omega} v_{\varepsilon}\left|\nabla u_{\varepsilon}\right|^2 \leq C(K)\quad \text { for all } \varepsilon \in(0,1)
\end{align*}
\begin{align*}
\int_0^{T_{\max , \varepsilon}}  \int_{\Omega} u_{\varepsilon} v_{\varepsilon}^{-3}\left|\nabla v_{\varepsilon}\right|^4 \leq C(K)\quad \text { for all } \varepsilon \in(0,1)
\end{align*}
and  
\begin{align*}
\int_0^{T_{\max , \varepsilon}} \int_{\Omega} v_{\varepsilon}^{-1}\left|\nabla v_{\varepsilon}\right|^2\left|D^2 \ln v_{\varepsilon}\right|^2 \leq C(K)\quad \text { for all } \varepsilon \in(0,1),
\end{align*}
which together with \eqref{4.19-3-a} yields
\begin{align*}
\int_0^{T_{\max , \varepsilon}} \int_{\Omega} \frac{\left|\nabla v_{\varepsilon}\right|^6}{v_{\varepsilon}^5} \leq C(K)\quad \text { for all } \varepsilon \in(0,1).
\end{align*}
\end{proof}

We can now ensure that despite the diffusion degeneracy in the first equation of \eqref{-2.5}, the first solution components remain bounded concerning the norm in any $L^p$ space with $p \geq 2$. Our derivation of this will rely on \eqref{4.23-n3.101}, Lemma \ref{4.7-lem-2-1} and the corresponding decay features expressed in Lemma \ref{lemma-4.23-23:32}.
\begin{lem}\label{lem-4.41a}
Let $K>0$. Then for all $p \geq 1$, there exists $C(p, K)>0$ such that if $u_0$ and $v_0$ satisfy \eqref{th1-2} as well as
\begin{align}\label{4.7-5}
\int_{\Omega} u_0^p \leq K
\end{align}
then 
\begin{align}\label{4.7-6}
\int_{\Omega} u_{\varepsilon}^p(t) \leq C(p, K) \quad \text { for all } t \in (0,T_{\max , \varepsilon}) \text { and } \varepsilon \in(0,1)\quad \text { for all } \varepsilon \in(0,1)
\end{align}
and
\begin{align}\label{4.7-611}
\int_0^{T_{\max , \varepsilon}} \int_{\Omega} u_{\varepsilon}^{p-1} v_{\varepsilon} \left|\nabla u_{\varepsilon}\right|^2 \leq  C(p, K) \quad \text { for all } \varepsilon \in(0,1)
\end{align}
and
\begin{align}\label{4.7-6111}
\int_0^{T_{\max , \varepsilon}} \int_{\Omega} u_{\varepsilon}^{p+1} v_{\varepsilon} \leq  C(p, K) \quad \text { for all } \varepsilon \in(0,1)
\end{align}
as well as
\begin{align}\label{4.7-61111}
\int_0^{T_{\max , \varepsilon}}\left\|\nabla\left(u_{\varepsilon}^{\frac{p+1}{2}}(s) v_{\varepsilon}(s)\right)\right\|_{L^2(\Omega)}^2 d s\leq  C(p, K) \quad \text { for all } \varepsilon \in(0,1).
\end{align}
\end{lem}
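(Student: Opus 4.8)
The plan is to run a standard $L^p$ testing procedure on the first equation of \eqref{-2.5}, but with the degenerate weight $v_\varepsilon$ handled by the functional inequalities established above. First I would test the $u_\varepsilon$-equation against $u_\varepsilon^{p-1}$ (justified by the classical regularity from Lemma \ref{lemma-2.1} and the strict positivity of $u_\varepsilon$), integrate by parts using the no-flux boundary condition, and obtain an identity of the schematic form
\begin{align*}
\frac{1}{p}\frac{d}{dt}\int_\Omega u_\varepsilon^p + (p-1)\int_\Omega u_\varepsilon^{p-1} v_\varepsilon|\nabla u_\varepsilon|^2
= \chi(p-1)\int_\Omega u_\varepsilon^{p} v_\varepsilon\,\nabla u_\varepsilon\cdot\nabla v_\varepsilon + \ell\int_\Omega u_\varepsilon^p v_\varepsilon.
\end{align*}
The cross term is then split by Young's inequality as $\chi(p-1)\int_\Omega u_\varepsilon^{p}v_\varepsilon\nabla u_\varepsilon\cdot\nabla v_\varepsilon \le \tfrac{p-1}{2}\int_\Omega u_\varepsilon^{p-1}v_\varepsilon|\nabla u_\varepsilon|^2 + \tfrac{\chi^2(p-1)}{2}\int_\Omega u_\varepsilon^{p+1}v_\varepsilon|\nabla v_\varepsilon|^2$, so that the first diffusion term absorbs half of the gradient contribution and we are left needing to control $\int_\Omega u_\varepsilon^{p+1}v_\varepsilon|\nabla v_\varepsilon|^2$ and $\ell\int_\Omega u_\varepsilon^p v_\varepsilon$.

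The heart of the argument is to estimate $\int_\Omega u_\varepsilon^{p+1}v_\varepsilon|\nabla v_\varepsilon|^2$ by Lemma \ref{4.7-lem-2-1} applied with $\varphi=u_\varepsilon$, $\psi=v_\varepsilon$ and a small parameter $\eta$ chosen so that $\tfrac{\chi^2(p-1)}{2}\cdot\eta$ lies below $\tfrac{p-1}{4}$ (the remaining half of the good dissipation term). This yields, after using $\|v_\varepsilon\|_{L^\infty}\le\|v_0\|_{L^\infty}$ from \eqref{-2.9} and the uniform mass bound $\int_\Omega u_\varepsilon\le c_0(K)$ from \eqref{4.19-1}, a bound of the form
\begin{align*}
\tfrac{\chi^2(p-1)}{2}\int_\Omega u_\varepsilon^{p+1}v_\varepsilon|\nabla v_\varepsilon|^2
\le \tfrac{p-1}{4}\int_\Omega u_\varepsilon^{p-1}v_\varepsilon|\nabla u_\varepsilon|^2
+ c_2\int_\Omega u_\varepsilon v_\varepsilon
+ c_3\Big(\int_\Omega u_\varepsilon^{p+1}v_\varepsilon + c_4\Big)\int_\Omega\frac{|\nabla v_\varepsilon|^4}{v_\varepsilon^3}.
\end{align*}
Here the crucial point is that by \eqref{4.23-n3.101} the quantity $h_\varepsilon(t):=\int_\Omega |\nabla v_\varepsilon|^4/v_\varepsilon^3$ is bounded in $L^\infty$, and by \eqref{n-3.10} (or the finer \eqref{5.3-13.46}) it is also bounded in $L^1$ over the whole interval $(0,T_{\max,\varepsilon})$. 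Similarly, to close the loop I still need $\int_\Omega u_\varepsilon^{p+1}v_\varepsilon$ controlled by $\int_\Omega u_\varepsilon^p$ times an integrable-in-time factor; for this I invoke Lemma \ref{lemma-4.23-23:32} with $\varphi=u_\varepsilon$, $\psi=v_\varepsilon$, which gives $\int_\Omega u_\varepsilon^{p+1}v_\varepsilon \le C(p)\,g_\varepsilon(t)\int_\Omega u_\varepsilon^p$ where $g_\varepsilon(t):=\int_\Omega\tfrac{u_\varepsilon}{v_\varepsilon}|\nabla v_\varepsilon|^2+\int_\Omega\tfrac{v_\varepsilon}{u_\varepsilon}|\nabla u_\varepsilon|^2+\int_\Omega u_\varepsilon v_\varepsilon$ is integrable in time by \eqref{n-3.4}, \eqref{n-3.9} and \eqref{-2.10}.

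Assembling these, and writing $y_\varepsilon(t):=\int_\Omega u_\varepsilon^p$, the differential inequality collapses to
\begin{align*}
\frac{1}{p}y_\varepsilon'(t) + \tfrac{p-1}{4}\int_\Omega u_\varepsilon^{p-1}v_\varepsilon|\nabla u_\varepsilon|^2 + \tfrac14\int_\Omega u_\varepsilon^{p+1}v_\varepsilon
\le a_\varepsilon(t)\,y_\varepsilon(t) + b_\varepsilon(t),
\end{align*}
where both $a_\varepsilon$ and $b_\varepsilon$ are built from $h_\varepsilon$, $g_\varepsilon$, $\int_\Omega u_\varepsilon v_\varepsilon$, $\int_\Omega u_\varepsilon^2 v_\varepsilon$ (for the $\ell$-term, treated via Lemma \ref{4.10-lem-1-xx} or Young's inequality in the same spirit), hence enjoy uniform-in-$\varepsilon$ $L^1((0,T_{\max,\varepsilon}))$ bounds; in particular $\exp(\int_s^t a_\varepsilon)\le e^{c_5(p,K)}$. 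An ODE comparison (Grönwall) argument then yields \eqref{4.7-6}, and integrating the resulting inequality in time over $(0,T_{\max,\varepsilon})$ produces \eqref{4.7-611} and \eqref{4.7-6111}. Finally \eqref{4.7-61111} follows from \eqref{4.7-611}, \eqref{4.7-6111} and the identity $\nabla(u_\varepsilon^{(p+1)/2}v_\varepsilon) = \tfrac{p+1}{2}u_\varepsilon^{(p-1)/2}v_\varepsilon\nabla u_\varepsilon + u_\varepsilon^{(p+1)/2}\nabla v_\varepsilon$ together with the pointwise bound $|\nabla v_\varepsilon|^2 \le \|v_0\|_{L^\infty}^{2}\cdot v_\varepsilon^{-3}|\nabla v_\varepsilon|^4/(\text{something})$—more precisely, splitting $\|\nabla(u_\varepsilon^{(p+1)/2}v_\varepsilon)\|_{L^2}^2 \lesssim \int_\Omega u_\varepsilon^{p-1}v_\varepsilon^2|\nabla u_\varepsilon|^2 + \int_\Omega u_\varepsilon^{p+1}|\nabla v_\varepsilon|^2$ and bounding $v_\varepsilon \le \|v_0\|_{L^\infty}$ in the first term (using \eqref{4.7-611}) and $u_\varepsilon^{p+1}|\nabla v_\varepsilon|^2 = u_\varepsilon^{p+1}v_\varepsilon\cdot v_\varepsilon^{-1}|\nabla v_\varepsilon|^2$ followed by Lemma \ref{4.7-lem-2-1} (or by Hölder against \eqref{5.3-13.46} and \eqref{4.7-6111}) in the second.

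I expect the main obstacle to be the bookkeeping in choosing $\eta$ in Lemma \ref{4.7-lem-2-1} and $\delta$'s in the Young splittings so that every gradient term $\int_\Omega u_\varepsilon^{p-1}v_\varepsilon|\nabla u_\varepsilon|^2$ generated on the right-hand side is strictly dominated by the coefficient $(p-1)$ of the dissipation term on the left, uniformly in $p$; one must also be careful that the "constant" pieces produced by Lemma \ref{4.7-lem-2-1} (the $\{\int_\Omega u_\varepsilon\}^{2p+1}$ term) are absorbed using the uniform mass bound \eqref{4.19-1} rather than left dangling. Everything else is a routine Grönwall argument once the time-integrability of the coefficient functions is traced back to Lemmas \ref{lem-3.3}, \ref{n-lem-3.2a}, \ref{n-lem-3.3a}, \ref{4.10-lem-1-xx} and \ref{4.10-lem-1}.
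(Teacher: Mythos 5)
Your proposal follows essentially the same route as the paper's proof: testing with $u_{\varepsilon}^{p-1}$, absorbing the chemotactic term via Lemma \ref{4.7-lem-2-1} with small $\eta$ together with the uniform bound \eqref{4.23-n3.101}, converting $\int_\Omega u_{\varepsilon}^{p+1} v_{\varepsilon}$ through Lemma \ref{lemma-4.23-23:32} into a term with time-integrable coefficient suitable for an ODE comparison argument, and finally splitting $\nabla\left(u_{\varepsilon}^{\frac{p+1}{2}} v_{\varepsilon}\right)$ by Cauchy--Schwarz. The only caveat concerns your primary suggestion for the term $\int_\Omega u_{\varepsilon}^{p+1}\left|\nabla v_{\varepsilon}\right|^2$ in the last step: Lemma \ref{4.7-lem-2-1} controls $\int_\Omega u_{\varepsilon}^{p+1} v_{\varepsilon}\left|\nabla v_{\varepsilon}\right|^2$, which carries an extra factor $v_{\varepsilon}$ that cannot be removed by a time-uniform lower bound, so that route fails as stated; however, your parenthetical alternative works, and the paper simply uses the Young splitting $u_{\varepsilon}^{p+1}\left|\nabla v_{\varepsilon}\right|^2 \leq u_{\varepsilon} v_{\varepsilon}^{-3}\left|\nabla v_{\varepsilon}\right|^4+u_{\varepsilon}^{2 p+1} v_{\varepsilon}^3$ combined with \eqref{4.23-n3.101-q} and \eqref{4.7-6111} at the exponent $2p$.
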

\begin{proof}
Multiplying the first equation of \eqref{-2.5} by $u_{\varepsilon}^{p-1}$ with $p \geq 1$, integrating by parts and using Young's inequality, we have 
\begin{align}\label{3.7-3}
&\frac{d}{d t} \int_{\Omega} u_{\varepsilon}^p\nonumber\\  
= &p \int_{\Omega} u_{\varepsilon}^{p-1}[\nabla \cdot(u_{\varepsilon} v_{\varepsilon} \nabla u_{\varepsilon})-\chi \nabla \cdot\left(u_{\varepsilon}^{2} v_{\varepsilon} \nabla v_{\varepsilon}\right)+\ell u_{\varepsilon} v_{\varepsilon}] \nonumber\\
= &-p(p-1) \int_{\Omega} u_{\varepsilon}^{p-1} v_{\varepsilon}\left|\nabla u_{\varepsilon}\right|^2+p (p-1)\chi \int_{\Omega} u_{\varepsilon}^{p} v_{\varepsilon} \nabla u_{\varepsilon} \cdot \nabla v_{\varepsilon} + p \ell \int_{\Omega} u_{\varepsilon}^{p} v_{\varepsilon}\nonumber\\
\leq & -\frac{p(p-1)}{2} \int_{\Omega} u_{\varepsilon}^{p-1} v_{\varepsilon} \left|\nabla u_{\varepsilon}\right|^2\nonumber\\
& +\frac{p(p-1) \chi^2}{2} \int_{\Omega} u_{\varepsilon}^{p+1} v_{\varepsilon}\left|\nabla v_{\varepsilon}\right|^2+p \ell \int_{\Omega} u_{\varepsilon}^{p+1}  v _{\varepsilon} + p \ell \int_{\Omega}  v_{\varepsilon}
\end{align}
for all $t \in(0, T_{\max , \varepsilon})$ and $\varepsilon \in(0,1)$. Using Lemma \ref{4.7-lem-2-1} with $\eta =\min \left\{\frac{1}{2 \chi^2}, 1\right\}$, we provide $c_2>0$ such that 
\begin{align*}
\frac{p(p-1) \chi^2}{2} \int_{\Omega} u_{\varepsilon}^{p+1} v_{\varepsilon}\left|\nabla v_{\varepsilon}\right|^2 & \leq   \frac{p(p-1) }{4} \int_\Omega u_{\varepsilon}^{p-1} v_{\varepsilon}|\nabla u_{\varepsilon}|^2+  \int_{\Omega} u_{\varepsilon} v_{\varepsilon}\nonumber\\
&+  c_2 \cdot \left(\left\| v_{\varepsilon} \right\|_{L^{\infty}(\Omega)}+\left\|v_{\varepsilon} \right\|^3_{L^{\infty}(\Omega)}\right) \cdot \int_{\Omega} u_{\varepsilon}^{p+1} v_{\varepsilon}  \cdot \int_\Omega \frac{|\nabla v_{\varepsilon}|^4}{v_{\varepsilon}^3} \nonumber\\
&+c_2 \cdot \left\|v_{\varepsilon} \right\|^4_{L^{\infty}(\Omega)} \cdot\left\{\int_\Omega u_{\varepsilon} \right\}^{2 p+1} \cdot \int_\Omega \frac{|\nabla v_{\varepsilon}|^4}{v_{\varepsilon}^3}.
\end{align*}
for all $t \in(0, T_{\max , \varepsilon})$ and $\varepsilon \in(0,1)$. This together with \eqref{-2.9}, \eqref{4.23-n3.101} and \eqref{3.7-3} yields with $c_3:=c_3(K)>0$
\begin{align}\label{4.24-17:01}
&  \frac{d}{d t} \int_{\Omega} u_{\varepsilon}^p 
+ \frac{p(p-1)}{4} \int_{\Omega} u_{\varepsilon}^{p-1} v_{\varepsilon} \left|\nabla u_{\varepsilon}\right|^2\nonumber\\
\leq &  c_2 \cdot \left(\left\| v_{\varepsilon} \right\|_{L^{\infty}(\Omega)}+\left\|v_{\varepsilon} \right\|^3_{L^{\infty}(\Omega)}\right) \cdot \int_{\Omega} u_{\varepsilon}^{p+1} v_{\varepsilon}  \cdot \int_\Omega \frac{|\nabla v_{\varepsilon}|^4}{v_{\varepsilon}^3}\nonumber\\
& +c_2 \cdot \left\|v_{\varepsilon} \right\|^4_{L^{\infty}(\Omega)} \cdot\left\{\int_\Omega u_{\varepsilon} \right\}^{2 p+1} \cdot \int_\Omega \frac{|\nabla v_{\varepsilon}|^4}{v_{\varepsilon}^3} +\int_{\Omega} u_{\varepsilon} v_{\varepsilon} + p \ell \int_{\Omega} u_{\varepsilon}^{p+1}  v _{\varepsilon} + p \ell \int_{\Omega}  v_{\varepsilon}\nonumber\\
\leq &  c_2 c_3 \cdot \left(\left\| v_{0} \right\|_{L^{\infty}(\Omega)}+\left\|v_{0} \right\|^3_{L^{\infty}(\Omega)}\right) \cdot \int_{\Omega} u_{\varepsilon}^{p+1} v_{\varepsilon}\nonumber\\
& +c_2 \cdot \left\|v_{0} \right\|^4_{L^{\infty}(\Omega)} \cdot\left\{\int_\Omega u_{\varepsilon} \right\}^{2 p+1} \cdot \int_\Omega \frac{|\nabla v_{\varepsilon}|^4}{v_{\varepsilon}^3} +\int_{\Omega} u_{\varepsilon} v_{\varepsilon} + p \ell \int_{\Omega} u_{\varepsilon}^{p+1}  v _{\varepsilon} + p \ell \int_{\Omega}  v_{\varepsilon}
\end{align}
From Lemma \ref{lemma-4.23-23:32}, we have with $c_4:=c_4(p)>0$, 
\begin{align}\label{513-2043}
\int_{\Omega} u_{\varepsilon}^{p +1} v_{\varepsilon} \leq  c_4 \cdot \left\{\int_{\Omega} \frac{u_{\varepsilon}}{v_{\varepsilon}}|\nabla v_{\varepsilon}|^2+\int_{\Omega} \frac{v_{\varepsilon}}{u_{\varepsilon}}|\nabla u_{\varepsilon}|^2+ \int_{\Omega} u_{\varepsilon} v_{\varepsilon}  \right\}\cdot \int_{\Omega} u_{\varepsilon}^p .
\end{align}
for all $t \in(0, T_{\max , \varepsilon})$ and $\varepsilon \in(0,1)$. Combining \eqref{4.19-1} with \eqref{4.24-17:01} and \eqref{513-2043}, we see that with $c_5:= c_2 c_3 \cdot \left\| v_{0} \right\|_{L^{\infty}(\Omega)}+c_2 c_3 \cdot\left\|v_{0} \right\|^3_{L^{\infty}(\Omega)}$ and $c_6:=c_2 \cdot \left\|v_{0} \right\|^4_{L^{\infty}(\Omega)} \cdot c_0^{2 p+1}(K) $,
\begin{align}\label{513-2048}
& \frac{d}{d t} \int_{\Omega} u_{\varepsilon}^p 
+ \frac{p(p-1)}{4} \int_{\Omega} u_{\varepsilon}^{p-1} v_{\varepsilon} \left|\nabla u_{\varepsilon}\right|^2\nonumber\\
\leq &  c_5 \int_{\Omega} u_{\varepsilon}^{p+1} v_{\varepsilon} +c_2 \cdot \left\|v_{0} \right\|^4_{L^{\infty}(\Omega)} \cdot\left\{\int_\Omega u_{\varepsilon} \right\}^{2 p+1} \cdot \int_\Omega \frac{|\nabla v_{\varepsilon}|^4}{v_{\varepsilon}^3} \nonumber\\
&+\int_{\Omega} u_{\varepsilon} v_{\varepsilon} + p \ell \int_{\Omega} u_{\varepsilon}^{p+1}  v _{\varepsilon} + p \ell \int_{\Omega}  v_{\varepsilon}\nonumber\\
\leq & c_4c_5 \cdot \left\{\int_{\Omega} \frac{u_{\varepsilon}}{v_{\varepsilon}}|\nabla v_{\varepsilon}|^2+\int_{\Omega} \frac{v_{\varepsilon}}{u_{\varepsilon}}|\nabla u_{\varepsilon}|^2+ \int_{\Omega} u_{\varepsilon} v_{\varepsilon}  \right\}\cdot \int_{\Omega} u_{\varepsilon}^p \nonumber\\
& +c_6 \int_\Omega \frac{|\nabla v_{\varepsilon}|^4}{v_{\varepsilon}^3} +\int_{\Omega} u_{\varepsilon} v_{\varepsilon} + p \ell \int_{\Omega} u_{\varepsilon}^{p+1}  v _{\varepsilon} + p \ell \int_{\Omega}  v_{\varepsilon}\nonumber\\
\leq &  (c_4c_5+p\ell)\cdot\left\{\int_{\Omega} \frac{u_{\varepsilon}}{v_{\varepsilon}}|\nabla v_{\varepsilon}|^2+\int_{\Omega} \frac{v_{\varepsilon}}{u_{\varepsilon}}|\nabla u_{\varepsilon}|^2+ \int_{\Omega} u_{\varepsilon} v_{\varepsilon}  \right\}\cdot \int_{\Omega} u_{\varepsilon}^p \nonumber\\
& +c_6 \int_\Omega \frac{|\nabla v_{\varepsilon}|^4}{v_{\varepsilon}^3} +\int_{\Omega} u_{\varepsilon} v_{\varepsilon}  + p \ell \int_{\Omega}  v_{\varepsilon} 
\end{align}
for all $t \in(0, T_{\max , \varepsilon})$ and $\varepsilon \in(0,1)$. For each $\varepsilon \in(0,1)$, set
\begin{align*}
y_{\varepsilon}(t) = \int_{\Omega} u_{\varepsilon}^p
\end{align*}
and
\begin{align*}
g_{\varepsilon}(t)=c_6 \int_\Omega \frac{|\nabla v_{\varepsilon}|^4}{v_{\varepsilon}^3} +\int_{\Omega} u_{\varepsilon} v_{\varepsilon} + p \ell \int_{\Omega}  v_{\varepsilon}
\end{align*}
as well as
\begin{align*}
h_{\varepsilon}(t)=(c_4c_5 +p\ell)\cdot \left\{\int_{\Omega} \frac{u_{\varepsilon}}{v_{\varepsilon}}|\nabla v_{\varepsilon}|^2+\int_{\Omega} \frac{v_{\varepsilon}}{u_{\varepsilon}}|\nabla u_{\varepsilon}|^2+ \int_{\Omega} u_{\varepsilon} v_{\varepsilon}  \right\},
\end{align*}
thus \eqref{513-2048} becomes 
\begin{align*}
y_{\varepsilon}^{\prime}(t) \leq g_{\varepsilon}(t) + h_{\varepsilon}(t) y_{\varepsilon}(t),\quad \text { for all } t \in\left(0, T_{\max , \varepsilon}\right) 
\end{align*}
which upon an ODE comparison argument implies that
\begin{align}\label{4.24-16:58}
y_{\varepsilon}(t) \leq y_{\varepsilon}(0) e^{\int_0^t h_{\varepsilon}(s) d s}+\int_0^t e^{\int_s^t h_{\varepsilon}(\sigma) d \sigma} g_{\varepsilon}(s) d s \quad \text { for all } t \in\left(0, T_{\max , \varepsilon}\right) .
\end{align}
Since 
\begin{align*}
\int_s^t h_{\varepsilon}(\sigma) d \sigma \leq c_7(p,K)  \quad \text { for all } t \in\left(0, T_{\max , \varepsilon}\right), s \in[0, t) \text {, and } \varepsilon \in(0,1)
\end{align*}
by \eqref{-2.9}, \eqref{-2.10}, \eqref{n-3.4} and \eqref{n-3.9}, and since  
\begin{align*}
\int_0^t g_{\varepsilon}(s) d s \leq c_8(p,K) \quad \text { for all } t \in\left(0, T_{\max , \varepsilon}\right)\text { and } \varepsilon \in(0,1).
\end{align*}
due to \eqref{-2.9}, \eqref{-2.10}, \eqref{n-3.4}, \eqref{n-3.5}, \eqref{n-3.9} and \eqref{n-3.10}. From \eqref{4.7-5} and \eqref{4.24-16:58}, we obtain
\begin{align*}
\int_{\Omega} u_{\varepsilon}^p(t) \leq & K e^{c_7(p,K) }+ c_8(p,K) e^{c_7(p,K)}
\end{align*}
for all $t \in(0, T_{\max , \varepsilon})$ and $\varepsilon \in(0,1)$. By a direct integration in \eqref{4.24-17:01}, we obtain
\begin{align*}
\frac{p(p-1)}{4} \int_0^{T_{\max , \varepsilon}} \int_{\Omega} u_{\varepsilon}^{p-1} v_{\varepsilon} \left|\nabla u_{\varepsilon}\right|^2 \leq C(p,K)
\end{align*} 
for all $\varepsilon \in(0,1)$. \eqref{4.7-6} together with \eqref{-2.10}, \eqref{n-3.4} \eqref{n-3.9} and \eqref{513-2043} yields \eqref{4.7-6111}.
To prove \eqref{4.7-6111}, we employ Cauchy-Schwarz inequality to see that
\begin{align*}
& \int_{\Omega}\left|\nabla\left(u_{\varepsilon}^{\frac{p+1}{2}} v_{\varepsilon}\right)\right|^2 \\
\leq & \frac{(p+1)^2}{2} \int_{\Omega} u_{\varepsilon}^{p-1} v_{\varepsilon}^2\left|\nabla u_{\varepsilon}\right|^2+2 \int_{\Omega} u_{\varepsilon}^{p+1}\left|\nabla v_{\varepsilon}\right|^2 \\
\leq & \frac{\left\|v_0\right\|_{L^{\infty}(\Omega)}(p+1)^2}{2} \int_{\Omega} u_{\varepsilon}^{p-1} v_{\varepsilon}\left|\nabla u_{\varepsilon}\right|^2+\int_{\Omega} u_{\varepsilon} v_{\varepsilon}^{-3}\left|\nabla v_{\varepsilon}\right|^4+\int_{\Omega} u_{\varepsilon}^{2 p+1} v_{\varepsilon}^3 \\
\leq & \frac{\left\|v_0\right\|_{L^{\infty}(\Omega)}(p+1)^2}{2} \int_{\Omega} u_{\varepsilon}^{p-1} v_{\varepsilon}\left|\nabla u_{\varepsilon}\right|^2+\int_{\Omega} u_{\varepsilon} v_{\varepsilon}^{-3}\left|\nabla v_{\varepsilon}\right|^4+\left\|v_0\right\|_{L^{\infty}(\Omega)}^2 \int_{\Omega} u_{\varepsilon}^{2 p+1} v_{\varepsilon}
\end{align*}
for all $t \in\left(0, T_{\max , \varepsilon}\right)$ and $\varepsilon \in(0,1)$. Upon integration in time, we have
\begin{align*}
\int_0^{T_{\max , \varepsilon}} \int_{\Omega}\left|\nabla\left(u_{\varepsilon}^{\frac{p+1}{2}} v_{\varepsilon}\right)\right|^2 \leq & \frac{\left\|v_0\right\|_{L^{\infty}(\Omega)}(p+1)^2}{2} \int_0^{T_{\max , \varepsilon}}  \int_{\Omega} u_{\varepsilon}^{p-1} v_{\varepsilon}\left|\nabla u_{\varepsilon}\right|^2\\
& +\int_0^{T_{\max , \varepsilon}}  \int_{\Omega} u_{\varepsilon} v_{\varepsilon}^{-3}\left|\nabla v_{\varepsilon}\right|^4+\left\|v_0\right\|_{L^{\infty}(\Omega)}^2 \int_0^{T_{\max , \varepsilon}}  \int_{\Omega} u_{\varepsilon}^{2 p+1} v_{\varepsilon}
\end{align*}
for all $\varepsilon \in(0,1)$. In view of \eqref{4.23-n3.101-q}, \eqref{4.7-611} and   \eqref{4.7-6111}, we have \eqref{4.7-61111}.
\end{proof}

\section{Global existence of weak solutions}\label{section4}

On the basis of standard heat semigroup estimates we can obtain uniform boundedness of the signal gradient.

\begin{lem}\label{lem-4.25-1239}
Let $K>0$. Then there exists $C(K)>0$ if $u_0$ and $v_0$ satisfy \eqref{th1-2},
then 
\begin{align}\label{4.24-2241}
\left\|v_{\varepsilon}(\cdot,t)\right\|_{W^{1, \infty}(\Omega)} \leq C(K) \quad \text { for all } t \in\left(0, T_{\max , \varepsilon}\right)\text { and } \varepsilon \in(0,1).
\end{align}
\end{lem}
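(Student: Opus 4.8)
The plan is to read off the $W^{1,\infty}$-bound from the variation-of-constants representation
\[
v_\varepsilon(\cdot,t) = e^{t\Delta}v_{0} - \int_0^t e^{(t-s)\Delta}\big(u_\varepsilon v_\varepsilon\big)(\cdot,s)\,ds ,
\]
$(e^{\sigma\Delta})_{\sigma\ge 0}$ denoting the Neumann heat semigroup on $\Omega$. First I would record a time-uniform bound on the inhomogeneity: since $u_0\in W^{1,\infty}(\Omega)$ forces $\int_\Omega u_0^{3}<\infty$, Lemma~\ref{lem-4.41a} applied with $p=3$ yields $\|u_\varepsilon(\cdot,s)\|_{L^3(\Omega)}\le C(K)$ for all $s\in(0,T_{\max,\varepsilon})$ and $\varepsilon\in(0,1)$, whence by \eqref{-2.9}
\[
\big\|(u_\varepsilon v_\varepsilon)(\cdot,s)\big\|_{L^3(\Omega)}\le \|v_0\|_{L^\infty(\Omega)}\,\|u_\varepsilon(\cdot,s)\|_{L^3(\Omega)}\le C(K)\qquad\text{for all }s\in(0,T_{\max,\varepsilon}),\ \varepsilon\in(0,1).
\]
Because $3>n=2$, this is exactly the integrability that will make the singular heat kernel integrable in time.

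For the Duhamel term I would exploit that $\nabla e^{\sigma\Delta}$ kills spatial constants, so that with $\overline{w}:=\frac{1}{|\Omega|}\int_\Omega w$,
\[
\nabla e^{(t-s)\Delta}(u_\varepsilon v_\varepsilon)(\cdot,s) = \nabla e^{(t-s)\Delta}\Big((u_\varepsilon v_\varepsilon)(\cdot,s)-\overline{(u_\varepsilon v_\varepsilon)(\cdot,s)}\Big),
\]
and then invoke the standard smoothing estimates for the Neumann heat semigroup on mean-zero data (see, e.g., \cite{2012-CPDE-Winkler}): with $\lambda_1>0$ the first positive eigenvalue of $-\Delta$ under Neumann conditions and some $C>0$,
\[
\big\|\nabla e^{\sigma\Delta}z\big\|_{L^\infty(\Omega)}\le C\big(1+\sigma^{-\frac56}\big)e^{-\lambda_1\sigma}\,\|z\|_{L^3(\Omega)}\qquad\text{for all }z\in L^3(\Omega)\text{ with }\int_\Omega z=0 ,
\]
since $\tfrac12+\tfrac{n}{2\cdot 3}=\tfrac56$. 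Plugging in and using the bound of the previous paragraph,
\begin{align*}
\Big\|\int_0^t\nabla e^{(t-s)\Delta}(u_\varepsilon v_\varepsilon)(\cdot,s)\,ds\Big\|_{L^\infty(\Omega)}
&\le C(K)\int_0^t\big(1+(t-s)^{-\frac56}\big)e^{-\lambda_1(t-s)}\,ds\\
&\le C(K)\int_0^\infty\big(1+\sigma^{-\frac56}\big)e^{-\lambda_1\sigma}\,d\sigma ,
\end{align*}
and the last integral is finite because $\sigma^{-5/6}$ is integrable at $0$ while the exponential weight controls the tail. This is the only genuinely delicate point: without the $e^{-\lambda_1\sigma}$ factor the estimate would merely give $O(1+t^{1/6})$, so the time-uniformity really hinges on passing to the mean-zero part.

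It remains to bound $\|\nabla e^{t\Delta}v_0\|_{L^\infty(\Omega)}$ uniformly in $t$. For $t\ge 1$ I would again split off $\overline{v_0}$ and use the same semigroup estimate with $L^\infty$ data to get $\|\nabla e^{t\Delta}(v_0-\overline{v_0})\|_{L^\infty(\Omega)}\le C(1+t^{-\frac12})e^{-\lambda_1 t}\|v_0\|_{L^\infty(\Omega)}\le C\|v_0\|_{L^\infty(\Omega)}$, while for $t\in(0,1)$ the elementary bound $\|\nabla e^{t\Delta}v_0\|_{L^\infty(\Omega)}\le C\|v_0\|_{W^{1,\infty}(\Omega)}$, expressing boundedness of $e^{t\Delta}$ on $W^{1,\infty}(\Omega)$ over a bounded time interval, is enough. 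Adding the two contributions gives $\|\nabla v_\varepsilon(\cdot,t)\|_{L^\infty(\Omega)}\le C(K)$ for all $t\in(0,T_{\max,\varepsilon})$, $\varepsilon\in(0,1)$, and together with $\|v_\varepsilon(\cdot,t)\|_{L^\infty(\Omega)}\le\|v_0\|_{L^\infty(\Omega)}$ from \eqref{-2.9} this is precisely \eqref{4.24-2241}. The main obstacle, as indicated, is not any single inequality but insisting on constants independent of $t$ and $T_{\max,\varepsilon}$, which is what dictates the use of the decaying (mean-zero) form of the Neumann heat-semigroup estimates rather than the cruder $\int_0^t(t-s)^{-5/6}\,ds$ bound.
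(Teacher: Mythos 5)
Your argument is correct and is essentially the paper's own proof: both rest on the variation-of-constants representation for $v_{\varepsilon}$, the time-uniform $L^p$ bound with $p>2$ on $u_{\varepsilon}$ supplied by Lemma \ref{lem-4.41a}, the bound $\|v_{\varepsilon}\|_{L^{\infty}(\Omega)}\le\|v_0\|_{L^{\infty}(\Omega)}$ from \eqref{-2.9}, and the $L^p$--$L^{\infty}$ gradient smoothing estimates for the Neumann heat semigroup. The only cosmetic difference is that the paper uses the shifted semigroup $e^{t(\Delta-1)}$ with source $u_{\varepsilon}v_{\varepsilon}-v_{\varepsilon}$, so the decay factor $e^{-(t-s)}$ is built in, whereas you obtain the needed temporal decay from the spectral gap via the mean-zero reduction; both routes give the same uniform-in-time conclusion.
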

\begin{proof}
According to the Neumann heat semigroup \cite{2010-JDE-Winkler}, fixing any $p>2$ we can find $c_1>0$ such that for all $t \in\left(0, T_{\max , \varepsilon}\right)$ and $\varepsilon \in(0,1)$,
\begin{align}\label{4.24-2317}
&\left\|\nabla v_{\varepsilon}(\cdot,t)\right\|_{L^{\infty}(\Omega)} \nonumber\\
= &\left\|\nabla e^{t(\Delta-1)} v_0-\int_0^t \nabla e^{(t-s)(\Delta-1)}\left\{u_{\varepsilon}(\cdot,s) v_{\varepsilon}(\cdot,s)-v_{\varepsilon}(\cdot,s)\right\} d s\right\|_{L^{\infty}(\Omega)} \nonumber\\
\leq &  c_1\left\|v_0\right\|_{W^{1, \infty}(\Omega)}\nonumber\\
& +  c_1 \int_0^t\left(1+(t-s)^{-\frac{1}{2}-\frac{n}{2 p}}\right) e^{-(t-s)}\left\|u_{\varepsilon}(\cdot,s) v_{\varepsilon}(\cdot,s)-v_{\varepsilon}(\cdot,s)\right\|_{L^p(\Omega)} d s .
\end{align}
Since \eqref{-2.9} implies that
\begin{align*}
& \left\|u_{\varepsilon}(\cdot,s) v_{\varepsilon}(\cdot,s)-v_{\varepsilon}(s)\right\|_{L^p(\Omega)} \\
\leq & \left\|u_{\varepsilon}(\cdot,s)\right\|_{L^p(\Omega)}\left\|v_{\varepsilon}(\cdot,s)\right\|_{L^{\infty}(\Omega)}+|\Omega|^{\frac{1}{p}}\left\|v_{\varepsilon}(\cdot,s)\right\|_{L^{\infty}(\Omega)} \\
\leq & c_2\left\|v_0\right\|_{L^{\infty}(\Omega)}+|\Omega|^{\frac{1}{p}}\left\|v_0\right\|_{L^{\infty}(\Omega)} \quad \text { for all } s \in\left(0, T_{\max , \varepsilon}\right) \text { and } \varepsilon \in(0,1),
\end{align*}
with $c_2(K):=\sup _{\varepsilon \in(0,1)} \sup _{t \in\left(0, T_{\max , \varepsilon}\right)}\left\|u_{\varepsilon}(\cdot,s)\right\|_{L^p(\Omega)}$ being finite by Lemma \ref{lem-4.41a}, from \eqref{4.24-2317} we directly obtain \eqref{4.24-2241}.
\end{proof}
The following two lemmas come from \cite{2024-JDE-Winkler}.
\begin{lem}\label{lem-511-1522}
Let $p_{\star}>2$. Then there exist $\kappa=\kappa\left(p_{\star}\right)>0$ and $K=K\left(p_{\star}\right)>0$ such that for any choice of $p \geq p_{\star}$ and $\eta \in(0,1]$, all $\varphi \in C^1(\bar{\Omega})$ and $\psi \in C^1(\bar{\Omega})$ fulfilling $\varphi>0$ and $\psi>0$ in $\bar{\Omega}$ satisfy
\begin{align*}
\int_{\Omega} \varphi^{p+1} \psi \leq & \eta \int_{\Omega} \varphi^{p-1} \psi|\nabla \varphi|^2+\eta \cdot\left\{\int_{\Omega} \varphi^{\frac{p}{2}}\right\}^{\frac{2(p+1)}{p}} \cdot \int_{\Omega} \frac{|\nabla \psi|^6}{\psi^5} \nonumber\\
& +K \eta^{-\kappa} p^{2 \kappa} \cdot\left\{\int_{\Omega} \varphi^{\frac{p}{2}}\right\}^2 \cdot \int_{\Omega} \varphi \psi
\end{align*}
\end{lem}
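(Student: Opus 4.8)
The plan is to derive this inequality from the Sobolev-type estimate \eqref{5.2-2238} together with a careful interpolation, exactly in the spirit of Lemma~\ref{4.7-lem-2-1}, but now gaining one extra power of integrability in the gradient term on $\psi$ by using the sixth-power quantity $\int_\Omega |\nabla\psi|^6\psi^{-5}$ rather than the fourth-power one. First I would apply the planar Gagliardo--Nirenberg / Sobolev inequality \eqref{5.2-2238} to $\rho:=\varphi^{\frac{p+1}{2}}\psi^{\frac12}$, just as in \eqref{5.2-2304}, to obtain
\begin{align*}
\int_\Omega \varphi^{p+1}\psi \leq c_1(p+1)^2\Bigl\{\int_\Omega \varphi^{\frac{p-1}{2}}\psi^{\frac12}|\nabla\varphi|\Bigr\}^2 + c_1\Bigl\{\int_\Omega \varphi^{\frac{p+1}{2}}\psi^{-\frac12}|\nabla\psi|\Bigr\}^2 + c_1\Bigl\{\int_\Omega \varphi^{\frac{p+1}{2}}\psi^{\frac12}\Bigr\}^2,
\end{align*}
and I would additionally sharpen the $L^1$-norm in \eqref{5.2-2238} against a very low $L^q$-norm ($q=\frac{p}{2(p+1)}$, say) via Hölder plus Young, so that the zero-order contribution on the right is controlled by $\eta$ times the second-order term plus a term of the form $K\eta^{-\kappa}p^{2\kappa}\{\int_\Omega\varphi^{p/2}\}^2$ times a low-order factor; this is where the structure $\{\int_\Omega\varphi^{p/2}\}^2$ first appears and where the explicit dependence $\eta^{-\kappa}p^{2\kappa}$ with a \emph{fixed} $\kappa=\kappa(p_\star)$ is forced.

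The three bracketed integrals are then handled separately. For the first, Cauchy--Schwarz gives $\{\int_\Omega \varphi^{\frac{p-1}{2}}\psi^{\frac12}|\nabla\varphi|\}^2 \leq \int_\Omega \varphi^{p-1}\psi|\nabla\varphi|^2 \cdot \int_\Omega \varphi \psi$; after a further Young splitting this produces the $\eta\int_\Omega\varphi^{p-1}\psi|\nabla\varphi|^2$ term and, again, a $K\eta^{-\kappa}p^{2\kappa}\{\cdots\}^2\cdot\int_\Omega\varphi\psi$ contribution (here one must be careful to absorb the $(p+1)^2$ prefactor into $p^{2\kappa}$, which is why $\kappa\geq 1$). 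For the last, Hölder gives $\{\int_\Omega\varphi^{\frac{p+1}{2}}\psi^{\frac12}\}^2\leq \int_\Omega\varphi\psi\cdot\int_\Omega\varphi^p$, and then $\int_\Omega\varphi^p$ must be re-expressed in terms of $\{\int_\Omega\varphi^{p/2}\}^2$ — this needs one more Gagliardo--Nirenberg step, $\|\varphi^{p/2}\|_{L^2}^2\lesssim \|\nabla\varphi^{p/2}\|_{L^1}\|\varphi^{p/2}\|_{L^1}+\|\varphi^{p/2}\|_{L^1}^2$, feeding the gradient term back into $\int_\Omega\varphi^{p-1}\psi|\nabla\varphi|^2$ after reintroducing the weight $\psi$ and using $\psi\geq$ its positive infimum (or, better, absorbing $\|\psi\|_{L^\infty}$-type factors cleanly). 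The crucial middle term is $\{\int_\Omega\varphi^{\frac{p+1}{2}}\psi^{-\frac12}|\nabla\psi|\}^2$: here I would write $\psi^{-\frac12}|\nabla\psi| = (|\nabla\psi|^6\psi^{-5})^{1/6}\cdot\psi^{\frac{1}{3}}$ and apply Hölder with exponents $(6,\tfrac{6}{5})$ to get a factor $\{\int_\Omega|\nabla\psi|^6\psi^{-5}\}^{1/3}\cdot\{\int_\Omega \varphi^{\frac{3(p+1)}{5}}\psi^{2/5}\}^{5/3}$, and then reduce the $\varphi$-integral to $\int_\Omega\varphi^{p/2}$ and $\int_\Omega\varphi^{p+1}\psi$ by interpolation; the $\int_\Omega\varphi^{p+1}\psi$ piece is absorbed into the left-hand side once $\int_\Omega|\nabla\psi|^6\psi^{-5}$-weighted coefficients are small, which is legitimate because the claimed inequality already carries $\eta\cdot\{\int_\Omega\varphi^{p/2}\}^{\frac{2(p+1)}{p}}\cdot\int_\Omega|\nabla\psi|^6\psi^{-5}$ on its right-hand side and we are free to choose the absorption threshold.

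The main obstacle I anticipate is bookkeeping the powers so that (i) the exponent $\frac{2(p+1)}{p}$ on $\{\int_\Omega\varphi^{p/2}\}$ in the $|\nabla\psi|^6$-term comes out exactly — this is dictated by matching homogeneity degrees in $\varphi$ (degree $p+1$ on the left must equal degree $\tfrac{p}{2}\cdot\tfrac{2(p+1)}{p}=p+1$ on the right) — and (ii) all the $p$-dependent constants collapse into a single $K\eta^{-\kappa}p^{2\kappa}$ with $\kappa$ depending only on the lower bound $p_\star$, not on $p$. Getting (ii) right requires that every Young-inequality split be performed with conjugate exponents bounded uniformly for $p\geq p_\star$, so the number of ``$\eta$-absorptions'' and the resulting negative powers of $\eta$ stay bounded; choosing $\kappa$ as the largest negative $\eta$-power that appears (a finite number, e.g. related to $\tfrac{p_\star}{p_\star-1}$ or similar) then works. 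Once these exponent identities are verified, the rest is a routine chain of Hölder, Cauchy--Schwarz and Young inequalities, and collecting terms yields the stated estimate.
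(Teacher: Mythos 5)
The paper itself does not prove this lemma; it is quoted verbatim from \cite{2024-JDE-Winkler}, so your sketch has to stand on its own, and as written it does not. Your skeleton (the planar inequality \eqref{5.2-2238} applied to $\rho=\varphi^{\frac{p+1}{2}}\psi^{\frac12}$, the factorization $\psi^{-\frac12}|\nabla\psi|=\bigl(|\nabla\psi|^6\psi^{-5}\bigr)^{\frac16}\psi^{\frac13}$, and the concern that all Young exponents stay bounded for $p\ge p_\star$) is reasonable, but the decisive steps fail. First, the Cauchy--Schwarz bound $\bigl\{\int_\Omega\varphi^{\frac{p-1}{2}}\psi^{\frac12}|\nabla\varphi|\bigr\}^2\le\int_\Omega\varphi^{p-1}\psi|\nabla\varphi|^2\cdot\int_\Omega\varphi\psi$ is false: the two proposed factors multiply to $\varphi^{\frac p2}\psi|\nabla\varphi|$, not $\varphi^{\frac{p-1}{2}}\psi^{\frac12}|\nabla\varphi|$, and the inequality is inhomogeneous in $\psi$ (degree $1$ on the left, degree $2$ on the right), so it breaks under $\psi\mapsto\lambda\psi$ with small $\lambda$. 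A legitimate pairing yields $\int_\Omega\varphi^{p-1}\psi|\nabla\varphi|^2\cdot|\Omega|$ or $\int_\Omega\varphi^{p-2}|\nabla\varphi|^2\cdot\int_\Omega\varphi\psi$, and in neither case does a ``further Young splitting'' shrink the coefficient of $\int_\Omega\varphi^{p-1}\psi|\nabla\varphi|^2$ to an arbitrary $\eta$: a product of two integrals $P\cdot Q$ cannot be dominated by $\eta P+C(\eta)Q$, and the $(p+1)^2$ prefactor cannot be ``absorbed into $p^{2\kappa}$'' because in the target inequality that factor sits only on the third term. The missing idea is an interpolation which makes the gradient quantity enter with an exponent strictly below one against $\{\int_\Omega\varphi^{\frac p2}\}^2\int_\Omega\varphi\psi$, so that Young, with conjugate exponents controlled uniformly in $p\ge p_\star$, produces $\eta\int_\Omega\varphi^{p-1}\psi|\nabla\varphi|^2+K\eta^{-\kappa}p^{2\kappa}\{\int_\Omega\varphi^{\frac p2}\}^2\int_\Omega\varphi\psi$; this is exactly where $\kappa=\kappa(p_\star)$ originates, and your sketch never sets it up.

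The same defect affects the middle term: your H\"older step is arithmetically fine, but the absorption cannot deliver the stated coefficient. Once $\int_\Omega\varphi^{\frac{3(p+1)}{5}}\psi^{\frac25}$ is interpolated against $\int_\Omega\varphi^{p+1}\psi$, Young lets you absorb the $\int_\Omega\varphi^{p+1}\psi$ piece into the left side only with a fixed weight below one, and then the factor landing on $\{\int_\Omega\varphi^{\frac p2}\}^{\frac{2(p+1)}{p}}\int_\Omega|\nabla\psi|^6\psi^{-5}$ is a fixed constant, not the arbitrarily small $\eta$ the lemma asserts; forcing it down to $\eta$ inflates the weight on $\int_\Omega\varphi^{p+1}\psi$ beyond what can be absorbed. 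Your justification (``we are free to choose the absorption threshold'' because the target already carries that term) is circular, since no smallness of $\int_\Omega|\nabla\psi|^6\psi^{-5}$ is available. Finally, your treatment of $\int_\Omega\varphi^{p}$ invokes ``$\psi\ge$ its positive infimum'' or ``$\|\psi\|_{L^\infty}$-type factors''; neither is admissible, because $\kappa$ and $K$ may depend only on $p_\star$ and $\Omega$, the right-hand side contains no norm of $\psi$ beyond the three displayed integrals, and in the intended application $\psi=v_\varepsilon$ admits no time-uniform positive lower bound (its decay is the very point of the paper, and Lemma~\ref{4.7-lem-2-1}, which does carry $\|\psi\|_{L^\infty}$ factors, is a genuinely different and weaker statement). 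So the proposal has real gaps; for a complete argument one has to carry out the interpolation--Young scheme above, as is done in \cite{2024-JDE-Winkler}.
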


\begin{lem}\label{lem-511-1524}
Let $a \geq 1, b \geq 1, d \geq 0$ and $\left(M_k\right)_{k \in\{0,1,2,3, \ldots\}} \subset[1, \infty)$ be such that
\begin{align*}
M_k \leq a^k M_{k-1}^{2+d \cdot 2^{-k}}+b^{2^k} \quad \text { for all } k \geq 1 .
\end{align*}
Then
\begin{align*}
\liminf _{k \rightarrow \infty} M_k^{\frac{1}{2^k}} \leq\left(2 \sqrt{2} a^3 b^{1+\frac{d}{2}} M_0\right)^{e^{\frac{d}{2}}} .
\end{align*}
\end{lem}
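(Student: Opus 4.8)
The plan is to linearize the recursion by passing to logarithms. Since $M_k\ge 1$, the quantities $L_k:=2^{-k}\ln M_k$ are well defined and nonnegative, and $M_k^{1/2^k}=e^{L_k}$; as $\exp$ is continuous and increasing it commutes with $\liminf$, so it suffices to prove $\liminf_{k\to\infty}L_k\le\ln R$, where $R:=\big(2\sqrt2\,a^3b^{1+d/2}M_0\big)^{e^{d/2}}$. The starting observation is the dichotomy
\begin{align*}
M_k\le 2\max\bigl\{a^kM_{k-1}^{\,2+d\cdot2^{-k}},\ b^{2^k}\bigr\},
\end{align*}
which follows from $\xi+\eta\le2\max\{\xi,\eta\}$ for $\xi,\eta\ge0$; the hypotheses $a,b\ge1$ (and $M_0\ge1$) will enter only through nonnegativity of the logarithms that appear below.

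For those indices $k$ in the ``diffusion branch'', i.e. where $M_k\le 2a^kM_{k-1}^{\,2+d\cdot2^{-k}}$, I would take logarithms, divide by $2^k$, and use $\ln M_{k-1}=2^{k-1}L_{k-1}$ to rewrite the cross term, obtaining the affine estimate
\begin{align*}
L_k\le\Bigl(1+\frac{d}{2^{k+1}}\Bigr)L_{k-1}+\frac{\ln 2+k\ln a}{2^k}.
\end{align*}
Iterating this from an index $m$ up to $k$ yields $L_k\le\bigl(\prod_{j=m+1}^k\lambda_j\bigr)L_m+\sum_{j=m+1}^k\bigl(\prod_{i=j+1}^k\lambda_i\bigr)c_j$ with $\lambda_j:=1+d\,2^{-j-1}$ and $c_j:=(\ln 2+j\ln a)2^{-j}\ge0$. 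The whole quantitative content is then packed into two elementary series computations: $\prod_{j\ge1}\lambda_j\le\exp\bigl(\sum_{j\ge1}d\,2^{-j-1}\bigr)=e^{d/2}$ (using $\ln(1+x)\le x$ and $\sum_{j\ge1}2^{-j-1}=\tfrac12$), and $\sum_{j\ge1}c_j=\ln 2+2\ln a$ (using $\sum_{j\ge1}j\,2^{-j}=2$). Consequently, whenever the diffusion branch holds at every index in $(m,k]$ we get $L_k\le e^{d/2}\bigl(L_m+\sum_{j>m}c_j\bigr)$.

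For indices in the ``heat branch'' $M_k\le2b^{2^k}$ one has at once $L_k\le\ln b+2^{-k}\ln 2$. Now distinguish two cases. If the heat branch occurs for infinitely many $k$, then $\liminf_{k\to\infty}L_k\le\ln b\le\ln R$ along that subsequence, and we are done (here $R\ge b$ since its base is $\ge b$ and its exponent is $\ge1$). Otherwise let $k_0\ge0$ be the last index at which the heat branch holds ($k_0=0$ if it never does); then the diffusion branch holds for all $k>k_0$, and the iteration above from $m=k_0$ gives $L_k\le e^{d/2}\bigl(L_{k_0}+\sum_{j>k_0}c_j\bigr)$ for all $k>k_0$. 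If $k_0=0$ this is $L_k\le e^{d/2}(\ln M_0+\ln 2+2\ln a)$; if $k_0\ge1$ then $L_{k_0}\le\tfrac12\ln 2+\ln b$ and $\sum_{j>k_0}c_j\le\sum_{j\ge2}c_j=\tfrac12\ln 2+\tfrac32\ln a$, so $L_k\le e^{d/2}(\ln 2+\ln b+\tfrac32\ln a)$. In either case a direct comparison of exponents shows the bound is $\le\ln R$, whence $\limsup_{k\to\infty}L_k\le\ln R$, which is more than enough; exponentiating gives the claim.

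The only genuinely delicate point, and the step I would check most carefully, is this final bookkeeping: one must select exactly the right crude bounds for $L_{k_0}$ and for the tail $\sum_{j>k_0}c_j$ so that the constant stays below $\ln R$ uniformly in $k_0\ge0$. The tight case is $k_0=1$, where it is essential to keep the factor $2^{-k_0}\le\tfrac12$ in the bound for $L_{k_0}$ and to drop the term $c_1$ from the tail; it is precisely to absorb these terms (together with $M_0\ge1$, $a,b\ge1$, $d\ge0$) that the constant $2\sqrt2$ and the exponent $1+d/2$ on $b$ are chosen as they are. Everything else is routine.
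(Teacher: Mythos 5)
Your argument is correct: the dichotomy $M_k\le 2\max\{a^kM_{k-1}^{2+d\cdot2^{-k}},b^{2^k}\}$, the passage to $L_k=2^{-k}\ln M_k$ with the affine recursion $L_k\le(1+d\,2^{-k-1})L_{k-1}+2^{-k}(\ln 2+k\ln a)$, the bounds $\prod_j(1+d\,2^{-j-1})\le e^{d/2}$ and $\sum_j 2^{-j}(\ln 2+j\ln a)=\ln 2+2\ln a$, and the final case bookkeeping (including the $k_0\ge1$ case) all check out and indeed stay below $\ln\bigl(2\sqrt2\,a^3b^{1+d/2}M_0\bigr)^{e^{d/2}}$. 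Note that the paper gives no proof of this lemma — it is quoted from the cited reference \cite{2024-JDE-Winkler} — and your dichotomy-plus-iteration argument is essentially the standard one used there, so there is nothing further to compare.
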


We can now complete the most crucial step in the derivation of Theorem \ref{th1} by a recursive $L^p$ regularity argument in Moser's iterative style. However, when we try to bound $\int_{\Omega} u_{\varepsilon}^p$ we only partially rely on parabolicity due to the cross degeneracy of the diffusion in \eqref{-2.5}, namely, growth-limiting
zero-order absorptive expressions exclusively containing $\int_{\Omega} u_{\varepsilon}^p$ will obviously not be governed by respective diffusion-induced integrals. Instead, our argument need to make appropriate use of the temporal decay features expressed in \eqref{5.3-13.46} and \eqref{-2.10}.
\begin{lem}\label{lem-4.25-1259}
There exist $C>0$ and $p_0  > 1$ if $u_0$ and $v_0$ satisfy \eqref{th1-2}, then
\begin{align*}
\left\|u_{\varepsilon}(\cdot,t)\right\|_{L^{\infty}(\Omega)} \leq C \quad \text { for all } t \in\left(0, T_{\max , \varepsilon}\right) \text { and any } \varepsilon \in(0,1) \text {. }
\end{align*}
\end{lem}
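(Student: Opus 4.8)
The plan is to establish uniform $L^\infty$ bounds on $u_\varepsilon$ via a Moser-type iteration over the exponents $p_k = 2^k p_0$ for suitable $p_0 > 1$, tracking the quantities $M_k := \sup_{t \in (0,T_{\max,\varepsilon})} \int_\Omega u_\varepsilon^{p_k}(t) + 1$ and deriving a recursive inequality of the form $M_k \le a^k M_{k-1}^{2 + d\cdot 2^{-k}} + b^{2^k}$ so that Lemma \ref{lem-511-1524} applies. First I would revisit the differential inequality \eqref{4.24-17:01} from the proof of Lemma \ref{lem-4.41a}, but now I would use Lemma \ref{lem-511-1522} rather than Lemma \ref{4.7-lem-2-1} to control the problematic term $\frac{p(p-1)\chi^2}{2}\int_\Omega u_\varepsilon^{p+1} v_\varepsilon |\nabla v_\varepsilon|^2$ after first applying Hölder as in \eqref{4.7-3} to reduce it to something involving $\int_\Omega u_\varepsilon^{2(p+1)} v_\varepsilon^5$; the point of Lemma \ref{lem-511-1522} is that it produces the good diffusion term $\eta\int_\Omega \varphi^{p-1}\psi|\nabla\varphi|^2$ (absorbable by the left-hand side of \eqref{4.24-17:01}), plus a term $\eta\{\int_\Omega u_\varepsilon^{p/2}\}^{2(p+1)/p}\int_\Omega |\nabla v_\varepsilon|^6/v_\varepsilon^5$, plus a zero-order term of the form $K\eta^{-\kappa}p^{2\kappa}\{\int_\Omega u_\varepsilon^{p/2}\}^2\int_\Omega u_\varepsilon v_\varepsilon$. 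The decay features \eqref{5.3-13.46} and \eqref{-2.10} are exactly what make the time integrals of the last two contributions finite and controllable by $M_{k-1}$, so after integrating in time and using $\|v_\varepsilon\|_{L^\infty}\le\|v_0\|_{L^\infty}$, one arrives at an estimate roughly like $M_k \le C\cdot(\text{something})^k \cdot M_{k-1}^2 \cdot(1 + \text{decay integrals}) + (\text{initial data})^{2^k}$.

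The key technical steps in order: (i) fix $p_\star > 2$ and apply Lemma \ref{lem-511-1522} with $p = p_k$, $\eta = \eta_k$ chosen so the resulting diffusion term is a fixed fraction of $\frac{p_k(p_k-1)}{4}\int_\Omega u_\varepsilon^{p_k-1}v_\varepsilon|\nabla u_\varepsilon|^2$; this forces $\eta_k \sim p_k^{-2}$ up to constants, which is why the factor $\eta^{-\kappa}p^{2\kappa}$ in Lemma \ref{lem-511-1522} only contributes a polynomially-in-$p_k$ (hence, after taking $2^k$-th roots, harmless) prefactor. (ii) Recognize $\{\int_\Omega u_\varepsilon^{p_k/2}\}$ as bounded by $M_{k-1}$ since $p_k/2 = p_{k-1}$; the exponent $2(p_k+1)/p_k = 2 + 2/p_k = 2 + d\cdot 2^{-k}$ with $d = 2/p_0$, matching the form required by Lemma \ref{lem-511-1524}. (iii) Integrate the differential inequality over $(0,t)$, bound $\int_0^{T_{\max,\varepsilon}}\int_\Omega |\nabla v_\varepsilon|^6/v_\varepsilon^5$ by $C(K)$ via \eqref{5.3-13.46} and $\int_0^{T_{\max,\varepsilon}}\int_\Omega u_\varepsilon v_\varepsilon$ by \eqref{-2.10}, and absorb $\ell$-dependent lower-order terms $p_k\ell\int_\Omega u_\varepsilon^{p_k+1}v_\varepsilon$ either into the diffusion term again by a further application of Lemma \ref{lem-511-1522} (with a smaller $\eta$) or directly using that for $\ell = 0$ they vanish; this yields $M_k \le a^k M_{k-1}^{2+d\cdot2^{-k}} + b^{2^k}$ with $a$ depending only on $p_0$, $\|v_0\|_{L^\infty}$, $\chi$, $\ell$, $K$ and the constants from the cited lemmas, and $b$ coming from the term $K\cdot(\text{initial }L^{p_0}\text{ bound})$ together with the additive $+1$. (iv) Apply Lemma \ref{lem-511-1524} to conclude $\liminf_{k\to\infty} M_k^{1/2^k} \le (2\sqrt2\, a^3 b^{1+d/2} M_0)^{e^{d/2}}$, and since $M_k^{1/2^k} = (\int_\Omega u_\varepsilon^{p_k} + 1)^{1/2^k} \to \|u_\varepsilon(\cdot,t)\|_{L^\infty(\Omega)}^{p_0}$ uniformly in $t$ and $\varepsilon$ (using that $M_0$ is bounded by Lemma \ref{lem-4.41a} applied with $p = p_0$, which requires $u_0 \in L^{p_0}(\Omega)$, guaranteed by $u_0 \in W^{1,\infty}(\Omega) \subset L^\infty(\Omega)$), we obtain the claimed $\|u_\varepsilon(\cdot,t)\|_{L^\infty(\Omega)} \le C$.

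The main obstacle I anticipate is the bookkeeping of the $p_k$-dependence of all constants so that the recursion genuinely has the shape demanded by Lemma \ref{lem-511-1524}: one must verify that the prefactor multiplying $M_{k-1}^{2+d\cdot2^{-k}}$ is at most $a^k$ for a $k$-independent $a$, which hinges on the polynomial (not exponential) growth in $p_k$ produced by Lemma \ref{lem-511-1522} after the choice $\eta_k \sim p_k^{-2}$, combined with the fact that $\prod_{j\le k}(\text{poly}(p_j))$ contributes only a factor like $(\text{const})^k \cdot (\text{poly in }k)$, absorbable into $a^k$ after enlarging $a$. A secondary subtlety is handling the $\ell > 0$ terms: the term $p_k\ell\int_\Omega u_\varepsilon^{p_k+1} v_\varepsilon$ is of the same critical order as the cross-diffusion term, so it too must be routed through Lemma \ref{lem-511-1522}, contributing an extra $p_k$-factor that must again be controlled, and its zero-order residue $\sim p_k \ell \int_\Omega u_\varepsilon v_\varepsilon$ is time-integrable by \eqref{-2.10}. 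One should also be slightly careful that $\int_\Omega u_\varepsilon \le c_0(K)$ from \eqref{4.19-1} is used to turn $\{\int_\Omega u_\varepsilon\}^{2p_k+1}$-type terms into constants, and that the additive $+1$ in the definition of $M_k$ keeps all $M_k \ge 1$ so Lemma \ref{lem-511-1524} is literally applicable. Once these are arranged, the conclusion is immediate.
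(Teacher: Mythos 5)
Your overall strategy is the paper's: iterate over $p_k=2^kp_0$, derive a recursion of the form treated by Lemma \ref{lem-511-1524}, control the time integrals by \eqref{5.3-13.46} and \eqref{-2.10}, and take the base case from Lemma \ref{lem-4.41a}. The genuine gap is in your step (i), the disposal of the weight $|\nabla v_\varepsilon|^2$ in the cross-diffusion term. You propose to apply H\"older as in \eqref{4.7-3}, which reduces $\int_\Omega u_\varepsilon^{p_k+1}v_\varepsilon|\nabla v_\varepsilon|^2$ to $\bigl\{\int_\Omega |\nabla v_\varepsilon|^4/v_\varepsilon^3\bigr\}^{1/2}\bigl\{\int_\Omega u_\varepsilon^{2(p_k+1)}v_\varepsilon^5\bigr\}^{1/2}$, and then to invoke Lemma \ref{lem-511-1522}. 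But Lemma \ref{lem-511-1522} bounds $\int_\Omega\varphi^{p+1}\psi$, not $\bigl\{\int_\Omega\varphi^{2(p+1)}\psi^5\bigr\}^{1/2}$; if you force it onto $\int_\Omega u_\varepsilon^{2(p_k+1)}v_\varepsilon$ (i.e.\ with exponent $2p_k+1$, or with $\varphi=u_\varepsilon^2$), the ``good'' term it produces is of the type $\eta\int_\Omega u_\varepsilon^{2p_k}v_\varepsilon|\nabla u_\varepsilon|^2$, which cannot be absorbed by the available dissipation $\int_\Omega u_\varepsilon^{p_k-1}v_\varepsilon|\nabla u_\varepsilon|^2$. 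The list of output terms you then write down (correct diffusion term, $\{\int u_\varepsilon^{p_k/2}\}^{2(p_k+1)/p_k}\int|\nabla v_\varepsilon|^6/v_\varepsilon^5$, and a multiple of $\int u_\varepsilon v_\varepsilon$) is exactly what Lemma \ref{lem-511-1522} gives when applied directly to $\int_\Omega u_\varepsilon^{p_k+1}v_\varepsilon$ --- which presupposes that the factor $|\nabla v_\varepsilon|^2$ has already been removed, and your outline supplies no valid mechanism for that removal.

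The paper's mechanism is Lemma \ref{lem-4.25-1239}, which you never invoke: using the $L^p$ bounds of Lemma \ref{lem-4.41a} in Neumann heat semigroup estimates yields $\|\nabla v_\varepsilon(\cdot,t)\|_{L^\infty(\Omega)}\le c_1$ uniformly in $t$ and $\varepsilon$, whence $\int_\Omega u_\varepsilon^{p_k+1}v_\varepsilon|\nabla v_\varepsilon|^2\le c_1^2\int_\Omega u_\varepsilon^{p_k+1}v_\varepsilon$, and only then does Lemma \ref{lem-511-1522} enter, exactly as in your description; the $\ell$-term $p_k\ell\int_\Omega u_\varepsilon^{p_k}v_\varepsilon$ is simply split by Young's inequality into $\int_\Omega u_\varepsilon^{p_k+1}v_\varepsilon+\int_\Omega u_\varepsilon v_\varepsilon$ and rides along, rather than needing a separate pass. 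With this single (but essential) correction, the rest of your bookkeeping --- the choice $\eta_k\sim p_k^{-2}$, the polynomial-in-$p_k$ prefactors, $d=2/p_0$, the additive $+1$ keeping $N_{k,\varepsilon}\ge1$, and the passage to the limit via Lemma \ref{lem-511-1524} --- coincides with the paper's proof.
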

\begin{proof}
By Lemma \ref{lem-4.25-1239}, we can find $c_1>0$ such that
\begin{align}\label{511-1312}
\left|\nabla v_{\varepsilon}(x, t)\right| \leq c_1 \quad \text { for all } x \in \Omega, t \in\left(0, T_{\max , \varepsilon}\right) \text { and } \varepsilon \in(0,1) \text {, }
\end{align}
For integers $k \geq 1$ we let $p_k:=2^k p_0$, and let
\begin{align}\label{511-1223-1}
N_{k, \varepsilon}(T):=1+\sup _{t \in(0, T)} \int_{\Omega} u_{\varepsilon}^{p_k}(t)< \infty, \quad T \in\left(0, T_{\max , \varepsilon}\right), k \in\{0,1,2, \ldots\}, \varepsilon \in(0,1).
\end{align}
Due to \eqref{4.7-6}, there exists $c_2>0$ such that 
\begin{align}\label{511-1223}
N_{0, \varepsilon}(T) \leq c_2 \quad \text { for all } T \in\left(0, T_{\max , \varepsilon}\right) \text { and } \varepsilon \in(0,1). 
\end{align}
To estimate $N_{k, \varepsilon}(T)$ for $k \geq 1, T \in\left(0, T_{\max , \varepsilon}\right)$ and $\varepsilon \in(0,1)$, fixing $k$ and $\varepsilon$, according to \eqref{-2.5}, \eqref{511-1312} and Young's inequality, we see that 
\begin{align}\label{511-1339}
\frac{d}{d t} \int_{\Omega} u_{\varepsilon}^{p_k}= & -p_k\left(p_k-1\right) \int_{\Omega} u_{\varepsilon}^{p_k-1} v_{\varepsilon}\left|\nabla u_{\varepsilon}\right|^2+p_k\left(p_k-1\right) \int_{\Omega} u_{\varepsilon}^{p_k} v_{\varepsilon} \nabla u_{\varepsilon} \cdot \nabla v_{\varepsilon}\nonumber \\
& +p_k \ell \int_{\Omega} u_{\varepsilon}^{p_k} v_{\varepsilon}\nonumber \\
\leq & -\frac{p_k\left(p_k-1\right)}{2} \int_{\Omega} u_{\varepsilon}^{p_k-1} v_{\varepsilon}\left|\nabla u_{\varepsilon}\right|^2+\frac{p_k\left(p_k-1\right)}{2} \int_{\Omega} u_{\varepsilon}^{p_k+1} v_{\varepsilon}\left|\nabla v_{\varepsilon}\right|^2 \nonumber\\
& +p_k \ell \int_{\Omega} u_{\varepsilon}^{p_k} v_{\varepsilon}\nonumber \\
\leq & -\frac{p_k\left(p_k-1\right)}{2} \int_{\Omega} u_{\varepsilon}^{p_k-1} v_{\varepsilon}\left|\nabla u_{\varepsilon}\right|^2+ p_k\left(p_k-1\right) c^2_1 \int_{\Omega} u_{\varepsilon}^{p_k+1} v_{\varepsilon} \nonumber \\
& +p_k \ell \int_{\Omega}   u_{\varepsilon}^{p_k} v_{\varepsilon}\nonumber \\
\leq & -\frac{p_k\left(p_k-1\right)}{2} \int_{\Omega} u_{\varepsilon}^{p_k-1} v_{\varepsilon}\left|\nabla u_{\varepsilon}\right|^2\nonumber\\
& + \left\{ p_k\left(p_k-1\right) c^2_1 + p_k \ell \right\}\cdot\left\{\int_{\Omega} u_{\varepsilon}^{p_k+1} v_{\varepsilon}+\int_{\Omega} u_{\varepsilon} v_{\varepsilon}\right\}
\end{align}
Since $p_k\left(p_k-1\right) \leq p_k^2$, $p_k \leq p_k^2$ and $\frac{p_k\left(p_k-1\right)}{2} \geq \frac{p_k^2}{4}$, from \eqref{511-1339} we infer that with $c_3= c^2_1+\ell$,
\begin{align}\label{511-1338}
\frac{d}{d t} \int_{\Omega} u_{\varepsilon}^{p_k}+\frac{p_k^2}{4} \int_{\Omega} u_{\varepsilon}^{p_k-1} v_{\varepsilon}\left|\nabla u_{\varepsilon}\right|^2 \leq c_3 p_k^2 \int_{\Omega} u_{\varepsilon}^{p_k+1} v_{\varepsilon}+c_3 p_k^2 \int_{\Omega} u_{\varepsilon} v_{\varepsilon} \quad \text { for all } t \in\left(0, T_{\max , \varepsilon}\right)
\end{align}
Owing to $p_{\star}=2 p_0> 2$, we can by Lemma \ref{lem-511-1522} assert that with $\kappa:=\kappa\left(p_{\star}\right)>0$ and $K:=K\left(p_{\star}\right)>0$,
\begin{align*}
\int_{\Omega} u_{\varepsilon}^{p_k+1} v_{\varepsilon} \leq & \frac{1}{4 c_3} \int_{\Omega} u_{\varepsilon}^{p_k-1} v_{\varepsilon}\left|\nabla u_{\varepsilon}\right|^2+\frac{1}{4 c_3} \cdot\left\{\int_{\Omega} u_{\varepsilon}^{\frac{p_k}{2}}\right\}^{\frac{2\left(p_k+1\right)}{p_k}} \cdot \int_{\Omega} \frac{\left|\nabla v_{\varepsilon}\right|^6}{v_{\varepsilon}^5} \\
& +K \cdot\left(4 c_3\right)^K \cdot p_k^{2 \kappa} \cdot\left\{\int_{\Omega} u_{\varepsilon}^{\frac{p_k}{2}}\right\}^2 \cdot \int_{\Omega} u_{\varepsilon} v_{\varepsilon} \quad \text { for all } t \in\left(0, T_{\max , \varepsilon}\right) .
\end{align*}
According definition of $\left(p_j\right)_{j \geq 0}$ and \eqref{511-1223-1}, for all $T \in\left(0, T_{\max , \varepsilon}\right)$ we can estimate
\begin{align*}
\int_{\Omega} u_{\varepsilon}^{\frac{p_k}{2}}=\int_{\Omega} u_{\varepsilon}^{p_{k-1}} \leq N_{k-1, \varepsilon}(T) \quad \text { for all } t \in(0, T),
\end{align*}
and obtain that
\begin{align*}
c_3 p_k^2 \int_{\Omega} u_{\varepsilon}^{p_k+1} v_{\varepsilon} \leq & \frac{p_k^2}{4} \int_{\Omega} u_{\varepsilon}^{p_k-1} v_{\varepsilon}\left|\nabla u_{\varepsilon}\right|^2+\frac{p_k^2}{4} N_{k-1, \varepsilon}^{\frac{2\left(p_k+1\right)}{p_k}}(T) \int_{\Omega} \frac{\left|\nabla v_{\varepsilon}\right|^6}{v_{\varepsilon}^5} \\
& +4^\kappa c_3^{\kappa+1} K p_k^{2 \kappa+2} N_{k-1, \varepsilon}^2(T) \int_{\Omega} u_{\varepsilon} v_{\varepsilon} \quad \text { for all } t \in(0, T) .
\end{align*}
Since $p_k^2 \leq p_k^{2 \kappa+2}$ and $1 \leq N_{k-1, \varepsilon}^2(T) \leq N_{k-1, \varepsilon}^{\frac{2\left(p_k+1\right)}{p_k}}(T)$, from \eqref{511-1338} we conclude that with $c_4 =\max \left\{\frac{1}{4}, 4^\kappa c_3^{K+1} K+c_3\right\}$,
\begin{align*}
\frac{d}{d t} \int_{\Omega} u_{\varepsilon}^{p_k} \leq c_4 p_k^{2 \kappa+2} N_{k-1, \varepsilon}^{\frac{2\left(p_k+1\right)}{p_k}}(T) g_{\varepsilon}(t) \text { for all } t \in(0, T) \text {, any } T \in\left(0, T_{\max , \varepsilon}\right) \text { and each } \varepsilon \in(0,1) \text {. }
\end{align*}
where $g_{\varepsilon}(t) = \int_{\Omega} \frac{\left|\nabla v_{\varepsilon}(\cdot,t)\right|^6}{v_{\varepsilon}^5(t)}+\int_{\Omega} u_{\varepsilon}(\cdot,t) v_{\varepsilon}(\cdot,t)$.
Based on \eqref{-2.10} and \eqref{5.3-13.46}, we have with $c_5>0$, 
\begin{align*}
\int_0^{T_{\max , \varepsilon}} h_{\varepsilon}(t) d t \leq c_5 \quad \text { for all } \varepsilon \in(0,1).
\end{align*}
An integration of this shows that for all $t \in(0, T)$, $T \in\left(0, T_{\max , \varepsilon}\right)$ and $\varepsilon \in(0,1)$,
\begin{align*}
\int_{\Omega} u_{\varepsilon}^{p_k} & \leq \int_{\Omega}\left(u_0+\varepsilon\right)^{p_k}+c_4 c_5 p_k^{2 \kappa+2} N_{k-1, \varepsilon}^{\frac{2\left(p_k+1\right)}{p_k}}(T) \\
& \leq|\Omega| \cdot\left\|u_0+1\right\|_{L^{\infty}(\Omega)}^{p_k}+c_4 c_5 p_k^{2 \kappa+2} N_{k-1, \varepsilon}^{\frac{2\left(p_k+1\right)}{p_k}}(T)
\end{align*}
In view of \eqref{511-1223}, this yields that
\begin{align*}
N_{k, \varepsilon}(T) & \leq 1+|\Omega| \cdot\left\|u_0+1\right\|_{L^{\infty}(\Omega)}^{p_k}+c_4 c_5 p_k^{2 \kappa+2} N_{k-1, \varepsilon}^{\frac{2\left(p_k+1\right)}{p_k}}(T) \\
& \leq a^{2^k}+b^k N_{k-1, \varepsilon}^{2+d \cdot 2^{-k}}(T) \quad \text { for all } T \in\left(0, T_{\max , \varepsilon}\right) \text { and } \varepsilon \in(0,1)
\end{align*}
where
\begin{align*}
a=\left(1+|\Omega| \cdot\left\|u_0+1\right\|_{L^{\infty}(\Omega)}\right)^{p_0}, \quad b=\left(\max \left\{2 c_4 c_5 p_0, 1\right\}\right)^{2 \kappa+2} \quad \text { and } \quad d=\frac{2}{p_0},
\end{align*}
due to definition of $\left(p_k\right)_{k \geq 0}$,
\begin{align*}
1+|\Omega| \cdot\left\|u_0+1\right\|_{L^{\infty}(\Omega)}^{p_k} \leq\left(1+|\Omega| \cdot\left\|u_0+1\right\|_{L^{\infty}(\Omega)}\right)^{p_k}=\left\{\left(1+|\Omega| \cdot\left\|u_0+1\right\|_{L^{\infty}(\Omega)}\right)^{p_0}\right\}^{2^k}
\end{align*}
and $\frac{2\left(p_k+1\right)}{p_k}=2+\frac{2}{p_k}=2+\frac{2}{p_0} \cdot 2^{-k}$. Since $k \geq 1$ was arbitrary, we employ Lemma \ref{lem-511-1524} and use \eqref{511-1223} to see that
\end{proof}
\begin{align*}
\left\|u_{\varepsilon}(t)\right\|_{L^{\infty}(\Omega)}^{p_0} & =\liminf _{k \rightarrow \infty}\left\{\int_{\Omega} u_{\varepsilon}^{p_k}(t)\right\}^{\frac{1}{2^k}} \\
& \leq \liminf _{k \rightarrow \infty} N_{k, \varepsilon}^{\frac{1}{2^k}}(T) \\
& \leq\left(2 \sqrt{2} b^3 a^{1+\frac{d}{2}} c_2\right)^{e^{\frac{d}{2}}}
\end{align*}
for all $t \in(0, T)$, $T \in\left(0, T_{\max , \varepsilon}\right)$ and each $\varepsilon \in(0,1)$.
Taking $T \nearrow T_{\max , \varepsilon}$, we arrive at \eqref{511-1312}.

The latter especially rules out any blow-up in the approximate problems:
\begin{lem}\label{5.3-2039}
Suppose that $u_0$ and $v_0$ satisfy \eqref{th1-2}. Then $T_{\max , \varepsilon}=+\infty$ for all $\varepsilon \in(0,1)$.
\end{lem}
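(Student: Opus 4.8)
The plan is to combine the uniform-in-$\varepsilon$ bound for $u_\varepsilon$ obtained in Lemma \ref{lem-4.25-1259} with the extensibility criterion \eqref{-2.7} recorded in Lemma \ref{lemma-2.1}. Before that, I would first check the bookkeeping point that the regularity hypothesis \eqref{th1-2} already entails all the quantitative finiteness conditions invoked in Sections \ref{section3} and \ref{section4}. Indeed, since $u_0 \in W^{1,\infty}(\Omega)$ is nonnegative with $\int_\Omega \ln u_0 > -\infty$, each of $\int_\Omega u_0$, $\int_\Omega u_0 \ln u_0$ (using that $\xi \mapsto \xi \ln \xi$ is bounded on $[0,\|u_0\|_{L^\infty(\Omega)}]$), $\int_\Omega u_0^{p}$ for any fixed $p \ge 1$, and $-\int_\Omega \ln u_0$ is finite; and since $v_0 \in W^{1,\infty}(\Omega)$ is positive on the compact set $\bar{\Omega}$, we have $v_0 \ge \delta > 0$ for some $\delta$, whence $\int_\Omega v_0$, $\int_\Omega |\nabla v_0|^2$, $\int_\Omega \frac{|\nabla v_0|^2}{v_0}$ and $\int_\Omega \frac{|\nabla v_0|^4}{v_0^3}$ are all finite as well. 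Choosing $K > 0$ larger than all of these quantities together with $\|v_0\|_{L^\infty(\Omega)}$ and the exponent $p_0$ from Lemma \ref{lem-4.25-1259}, the hypotheses of Lemmas \ref{n-lem-3.2a}, \ref{n-lem-3.3a}, \ref{4.10-lem-1}, \ref{lem-4.41a}, \ref{lem-4.25-1239} and \ref{lem-4.25-1259} are simultaneously satisfied.

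Next, Lemma \ref{lem-4.25-1259} then supplies a constant $C > 0$, independent of both $t$ and $\varepsilon$, such that
\begin{align*}
\left\|u_{\varepsilon}(\cdot,t)\right\|_{L^{\infty}(\Omega)} \leq C \quad \text{for all } t \in\left(0, T_{\max , \varepsilon}\right) \text{ and every } \varepsilon \in(0,1).
\end{align*}
In particular, fixing any $\varepsilon \in (0,1)$, this gives $\limsup_{t \nearrow T_{\max,\varepsilon}} \|u_\varepsilon(\cdot,t)\|_{L^\infty(\Omega)} \le C < \infty$.

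Finally I would argue by contradiction: if $T_{\max,\varepsilon}$ were finite for some $\varepsilon \in (0,1)$, then the dichotomy \eqref{-2.7} in Lemma \ref{lemma-2.1} would force $\limsup_{t \nearrow T_{\max,\varepsilon}} \|u_\varepsilon(\cdot,t)\|_{L^\infty(\Omega)} = \infty$, which is incompatible with the bound just displayed. Hence $T_{\max,\varepsilon} = +\infty$ for all $\varepsilon \in (0,1)$. There is no real obstacle in this last lemma: the only step demanding any care is the verification that \eqref{th1-2} implies the quantitative conditions assumed earlier, and this is immediate from the boundedness of $u_0$ and $v_0$, the strict positivity of $v_0$ on $\bar\Omega$, and the standing assumption $\int_\Omega \ln u_0 > -\infty$; all the substantive work has already been carried out in the Moser iteration underlying Lemma \ref{lem-4.25-1259}.
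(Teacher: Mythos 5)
Your proof is correct and follows essentially the same route as the paper, which likewise obtains the conclusion immediately by combining the uniform bound of Lemma \ref{lem-4.25-1259} with the extensibility criterion \eqref{-2.7} from Lemma \ref{lemma-2.1}. Your preliminary check that \eqref{th1-2} supplies the quantitative finiteness conditions (the constants $K$) used in the earlier lemmas is a harmless bookkeeping addition that the paper leaves implicit.
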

\begin{proof}
This immediately follows from Lemma \ref{lem-4.25-1259} when combined with \eqref{-2.7}.
\end{proof}

The $L^\infty$ estimate from Lemma \ref{lem-4.25-1259} enables us to control $v_{\varepsilon}$ from below by a comparison argument.
\begin{lem}\label{512-1649}
For all $T>0$ there exists $C(T)>0$ if $u_0$ and $v_0$ satisfy \eqref{th1-2}, then
\begin{align}\label{4.25-1106}
v_{\varepsilon}(x, t) \geq C(T) \quad \text { for all } x \in \Omega, t \in(0, T) \text {, and } \varepsilon \in(0,1).
\end{align}
\end{lem}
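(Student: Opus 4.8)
The plan is to exploit the $\varepsilon$-independent $L^\infty$ bound on $u_\varepsilon$ from Lemma \ref{lem-4.25-1259} together with an elementary parabolic comparison argument. First I would fix the constant $C_1>0$ furnished by Lemma \ref{lem-4.25-1259} such that $\|u_\varepsilon(\cdot,t)\|_{L^\infty(\Omega)}\leq C_1$ for all $t\in(0,T_{\max,\varepsilon})$ and $\varepsilon\in(0,1)$; by Lemma \ref{5.3-2039} we moreover know that $T_{\max,\varepsilon}=\infty$, so this bound is available on all of $(0,\infty)$. Since $v_\varepsilon>0$ in $\bar\Omega\times(0,\infty)$, the second equation in \eqref{-2.5} then yields the differential inequality $v_{\varepsilon t}\geq\Delta v_\varepsilon-C_1 v_\varepsilon$ in $\Omega\times(0,\infty)$, complemented by the homogeneous Neumann condition $\frac{\partial v_\varepsilon}{\partial\nu}=0$ on $\partial\Omega$.

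Next I would introduce the spatially homogeneous function $\underline{v}(t):=e^{-C_1 t}\,m_0$, where $m_0:=\min_{\bar\Omega}v_0$ is strictly positive because $v_0\in W^{1,\infty}(\Omega)$ is continuous and positive on $\bar\Omega$ by \eqref{th1-2}. As $\Delta\underline{v}\equiv0$ and $-u_\varepsilon\underline{v}\geq-C_1\underline{v}$, the function $\underline{v}$ is a subsolution in the relevant sense: $\underline{v}'(t)=-C_1\underline{v}(t)\leq\Delta\underline{v}(t)-u_\varepsilon(\cdot,t)\underline{v}(t)$ in $\Omega\times(0,\infty)$, with $\frac{\partial\underline{v}}{\partial\nu}=0$ on $\partial\Omega$ and $\underline{v}(0)=m_0\leq v_0=v_\varepsilon(\cdot,0)$ in $\bar\Omega$. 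Hence the comparison principle for the Neumann heat problem gives $v_\varepsilon(x,t)\geq\underline{v}(t)=e^{-C_1 t}m_0$ for all $x\in\Omega$, $t>0$ and $\varepsilon\in(0,1)$. Restricting to $t\in(0,T)$ then yields \eqref{4.25-1106} with $C(T):=e^{-C_1 T}m_0>0$.

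I do not expect any serious obstacle here: the argument is a textbook lower-bound estimate via comparison. The only two points deserving attention are that the constant $C_1$, and therefore $C(T)$, must be independent of $\varepsilon$ — which is precisely what Lemma \ref{lem-4.25-1259} delivers — and that the comparison may legitimately be carried out on the entire time interval, which is guaranteed by the global solvability $T_{\max,\varepsilon}=\infty$ from Lemma \ref{5.3-2039}.
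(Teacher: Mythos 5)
Your proposal is correct and follows essentially the same route as the paper: both invoke the uniform $L^\infty$ bound on $u_\varepsilon$ from Lemma \ref{lem-4.25-1259} to get $v_{\varepsilon t}\geq\Delta v_\varepsilon-c_1 v_\varepsilon$ and then compare with the spatially homogeneous lower barrier $\{\inf_\Omega v_0\}e^{-c_1 t}$, using the positivity of $v_0$ from \eqref{th1-2}. Your write-up merely makes the subsolution construction and the Neumann boundary compatibility explicit, which the paper leaves implicit.
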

\begin{proof}
Since
\begin{align*}
v_{\varepsilon t} \geq \Delta v_{\varepsilon}-c_1 v_{\varepsilon} \quad \text { in } \Omega \times(0, \infty) \quad \text { for all } \varepsilon \in(0,1),
\end{align*}
with $c_1:=\sup _{\varepsilon \in(0,1)} \sup _{t>0}\left\|u_{\varepsilon}(\cdot,t)\right\|_{L^{\infty}(\Omega)}$ being finite by Lemma \ref{lem-4.25-1259}, from a comparison principle we obtain that
\begin{align*}
v_{\varepsilon}(x, t) \geq\left\{\inf _{\Omega} v_0\right\} \cdot e^{-c_1 t} \quad \text { for all } x \in \Omega, t>0 \text { and } \varepsilon \in(0,1),
\end{align*}
so that the claim results from our positivity assumption on $v_0$ in \eqref{th1-2}.
\end{proof}

In order to facilitate a compactness argument based on an Aubin-Lions lemma, we will use  above estimates to obtain certain time regularity of $u_{\varepsilon}$.
\begin{lem}\label{lem-5.3-2014}
Let $K>0$ and $p>1$. Then we can pick $C(K)>0$ such that whenever \eqref{th1-2} holds, one has the following inequalities
\begin{align}\label{5.3-2020}
\int_0^{\infty}\left\|\partial_t\left(u_{\varepsilon}^{\frac{p+1}{2}}(\cdot,t) v_{\varepsilon}(\cdot,t)\right)\right\|_{\left(W^{3,2}(\Omega)\right)^{\star}} d t \leq C(K) \quad \text { for all } \varepsilon \in(0,1)
\end{align}
and
\begin{align}\label{512-1958}
\int_{\Omega} \ln \frac{\left\|v_0\right\|_{L^{\infty}(\Omega)}}{v_{\varepsilon}(\cdot,t)} \leq C(K) \quad \text { for all } t >0 \text { and } \varepsilon \in(0,1) .
\end{align}
as well as 
\begin{align}\label{513-935}
\int_{\Omega}\left(\ln \frac{1}{u_{\varepsilon}(\cdot,t)}\right)_{+}\leq C(K) \quad \text { for all } t >0 \text { and } \varepsilon \in(0,1) .
\end{align}
\end{lem}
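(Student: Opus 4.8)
The plan is to establish the three estimates one at a time, all by the same philosophy: differentiate a suitable functional of $(u_\varepsilon,v_\varepsilon)$, bound the right-hand side via the space-time integrability already secured in Lemmas \ref{n-lem-3.2a}--\ref{lem-4.41a}, and then integrate. First I would treat \eqref{5.3-2020}. Testing the first equation of \eqref{-2.5} against an arbitrary $\phi \in W^{3,2}(\Omega)$, one rewrites $\partial_t(u_\varepsilon^{\frac{p+1}{2}} v_\varepsilon)$ using the chain rule and the two PDEs, producing terms of the shape $\nabla(u_\varepsilon^{\frac{p+1}{2}} v_\varepsilon)\cdot\nabla\phi$ plus zero-order contributions; the former is controlled in $L^1_t L^2_x$ by \eqref{4.7-61111}, and $W^{3,2}(\Omega)\hookrightarrow W^{1,\infty}(\Omega)$ in two dimensions lets us absorb $\nabla\phi$ and $\phi$ in $L^\infty$. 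The remaining zero-order pieces (containing $u_\varepsilon^{\,q} v_\varepsilon$ with $q\le p+1$, $u_\varepsilon v_\varepsilon^{-3}|\nabla v_\varepsilon|^4$, etc.) are integrable in time by \eqref{4.7-6}, \eqref{4.7-6111}, \eqref{4.23-n3.101-q} and Lemma \ref{512-1649}, which gives the lower bound on $v_\varepsilon$ needed to handle any negative power of $v_\varepsilon$ on finite time intervals — though here, thanks to the ``decay'' structure, one expects the full $(0,\infty)$ bound. Taking the supremum over $\|\phi\|_{W^{3,2}}\le 1$ and integrating in $t$ yields \eqref{5.3-2020}.

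For \eqref{512-1958}, I would test the second equation in \eqref{-2.5} against $-1/v_\varepsilon$. Since $v_{\varepsilon t}=\Delta v_\varepsilon-u_\varepsilon v_\varepsilon$, this gives
\begin{align*}
\frac{d}{dt}\int_\Omega \ln\frac{\|v_0\|_{L^\infty(\Omega)}}{v_\varepsilon}
= -\int_\Omega \frac{v_{\varepsilon t}}{v_\varepsilon}
= \int_\Omega \frac{|\nabla v_\varepsilon|^2}{v_\varepsilon^2} + \int_\Omega u_\varepsilon,
\end{align*}
and both terms on the right are integrable in time: $\int_\Omega u_\varepsilon \le c_0(K)$ by \eqref{4.19-1} is only bounded, not integrable, so in fact I would instead use $\int_0^{T_{\max,\varepsilon}}\!\int_\Omega u_\varepsilon v_\varepsilon<\infty$ from \eqref{-2.10} together with a lower bound on $v_\varepsilon$ — but since no global lower bound on $v_\varepsilon$ is available, the cleaner route is to note that $\int_\Omega|\nabla v_\varepsilon|^2/v_\varepsilon^2 = \int_\Omega|\nabla\ln v_\varepsilon|^2$ and that $\int_0^\infty\!\int_\Omega v_\varepsilon^{-1}|\nabla v_\varepsilon|^2\left|D^2\ln v_\varepsilon\right|^2<\infty$ and $\int_0^\infty\!\int_\Omega v_\varepsilon^{-3}|\nabla v_\varepsilon|^4<\infty$ from Lemma \ref{4.10-lem-1}; one interpolates $\int_\Omega|\nabla v_\varepsilon|^2 v_\varepsilon^{-2}$ between $\int_\Omega v_\varepsilon^{-3}|\nabla v_\varepsilon|^4$ and $\int_\Omega v_\varepsilon|\nabla\ln v_\varepsilon|^{?}$ using $\|v_\varepsilon\|_{L^\infty}\le\|v_0\|_{L^\infty}$. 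Integrating in time and using that $\int_\Omega \ln(\|v_0\|_{L^\infty}/v_{0\varepsilon})\ge 0$ (so the initial datum contributes with a favorable sign) yields \eqref{512-1958}; the point is that the quantity is monotone-controlled rather than merely integrable, hence the pointwise-in-$t$ bound.

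Finally, for \eqref{513-935} I would revisit the differential inequality \eqref{n-3.6}: integrating it and using $\int_\Omega\ln u_{0\varepsilon}\ge\int_\Omega\ln u_0\ge-K$ together with $\int_\Omega\frac12|\nabla v_0|^2\le\frac12 K$ gives $-\int_\Omega\ln u_\varepsilon(t)\le\frac32 K$ for all $t$, whence $\int_\Omega(\ln\frac1{u_\varepsilon(t)})_+ \le \int_\Omega(-\ln u_\varepsilon(t))_+ \le \frac32 K + \int_\Omega(\ln u_\varepsilon(t))_+$, and the last term is controlled by $\int_\Omega u_\varepsilon(t)\le c_0(K)$ via $\ln\xi\le\xi$; more directly, $(\ln\frac1{u_\varepsilon})_+\le -\ln u_\varepsilon + u_\varepsilon$ pointwise since $(\ln\frac1{u_\varepsilon})_+ = (-\ln u_\varepsilon)_+$ and $-\ln u_\varepsilon \le (-\ln u_\varepsilon)_+ \le -\ln u_\varepsilon + (\ln u_\varepsilon)_+ \le -\ln u_\varepsilon + u_\varepsilon$, so integrating gives the bound from \eqref{n-3.7} and \eqref{4.19-1}. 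The main obstacle I anticipate is \eqref{5.3-2020}: carefully expanding $\partial_t(u_\varepsilon^{\frac{p+1}{2}}v_\varepsilon)$ produces several terms, and matching each one to exactly the right space-time estimate from Lemma \ref{lem-4.41a} (and ensuring the negative powers of $v_\varepsilon$ that appear are either absent or absorbed against $v_\varepsilon$-weighted quantities) requires some bookkeeping; the duality estimate against $W^{3,2}(\Omega)$ is only needed to make sense of the highest-order term $\nabla(u_\varepsilon^{\frac{p+1}{2}}v_\varepsilon)\cdot\nabla\phi$, and here the two-dimensional embedding $W^{3,2}\hookrightarrow W^{1,\infty}$ is essential.
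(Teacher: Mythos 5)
Your treatment of \eqref{513-935} is correct and in fact a bit more elementary than the paper's (which compares $-\int_\Omega\ln u_\varepsilon$ with $|\Omega|\ln\|u_\varepsilon\|_{L^\infty}$ via Lemma \ref{lem-4.25-1259}, whereas your pointwise inequality $(-\ln u_\varepsilon)_+\le-\ln u_\varepsilon+u_\varepsilon$ only needs \eqref{n-3.7} and \eqref{4.19-1}). The genuine gap is in \eqref{512-1958}. First, your identity has the wrong sign: $-\frac{d}{dt}\int_\Omega\ln v_\varepsilon=-\int_\Omega\frac{|\nabla v_\varepsilon|^2}{v_\varepsilon^2}+\int_\Omega u_\varepsilon$, so the Fisher-type term is favorable and can simply be dropped; your ``cleaner route'' of interpolating $\int_\Omega|\nabla v_\varepsilon|^2v_\varepsilon^{-2}$ against the quantities of Lemma \ref{4.10-lem-1} is therefore aimed at a term that causes no trouble. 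The actual obstruction, which you correctly identify but never resolve, is $\int_\Omega u_\varepsilon$: it is bounded by \eqref{4.19-1} but not integrable over $(0,\infty)$, so after time integration your bound grows linearly in $t$ and the claimed uniform-in-$t$ estimate does not follow. The paper closes this precisely by the route you rejected: it uses the pointwise lower bound \eqref{4.25-1106} to write $\int_\Omega u_\varepsilon\le c\int_\Omega u_\varepsilon v_\varepsilon$ and then invokes the global space-time bound \eqref{-2.10}. As written, your argument for \eqref{512-1958} does not prove the statement.

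For \eqref{5.3-2020} your overall scheme (duality against $\phi\in W^{3,2}(\Omega)$, the embedding $W^{3,2}\hookrightarrow W^{1,\infty}$ in two dimensions, expansion of $\partial_t(u_\varepsilon^{\frac{p+1}{2}}v_\varepsilon)$ via both equations) is the paper's, but two of your key steps would fail. (i) The expansion does not collapse to a single term $\nabla(u_\varepsilon^{\frac{p+1}{2}}v_\varepsilon)\cdot\nabla\phi$; one obtains some ten terms such as $u_\varepsilon^{\frac{p-1}{2}}v_\varepsilon^2|\nabla u_\varepsilon|^2\phi$, $u_\varepsilon^{\frac{p+3}{2}}v_\varepsilon|\nabla v_\varepsilon|^2\phi$, $u_\varepsilon^{\frac{p+1}{2}}(\nabla v_\varepsilon\cdot\nabla\phi)$, etc. Moreover \eqref{4.7-61111} is an $L^2$-in-time bound on $\|\nabla(u_\varepsilon^{\frac{p+1}{2}}v_\varepsilon)\|_{L^2(\Omega)}$ over $(0,\infty)$, and $L^2(0,\infty)\not\subset L^1(0,\infty)$, so it cannot by itself give the $L^1$-in-time control demanded by \eqref{5.3-2020}; the paper instead applies Young's inequality to every term so that the right-hand side consists only of the globally space-time integrable quantities $\int_\Omega u_\varepsilon^{p-1}v_\varepsilon|\nabla u_\varepsilon|^2$, $\int_\Omega u_\varepsilon^{p+1}v_\varepsilon$, $\int_\Omega\frac{u_\varepsilon}{v_\varepsilon}|\nabla v_\varepsilon|^2$, $\int_\Omega v_\varepsilon|\nabla u_\varepsilon|^2$, $\int_\Omega v_\varepsilon|\nabla v_\varepsilon|^2$, $\int_\Omega v_\varepsilon$ coming from \eqref{4.7-611}, \eqref{4.7-6111}, \eqref{n-3.9}, \eqref{4.23-n3.10}, \eqref{-3.4}, \eqref{n-3.5}. (ii) Handling negative powers of $v_\varepsilon$ via the lower bound of Lemma \ref{512-1649} cannot work here either, since that bound deteriorates like $e^{-ct}$ and \eqref{5.3-2020} is an estimate over all of $(0,\infty)$ with constant $C(K)$; the paper's bookkeeping is arranged so that the only negative power of $v_\varepsilon$ surviving is the combination $\frac{u_\varepsilon}{v_\varepsilon}|\nabla v_\varepsilon|^2$, which is globally controlled by \eqref{n-3.9}. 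So the plan needs to be rebuilt with the paper's term-by-term Young estimates rather than the $L^1_tL^2_x$ shortcut and the finite-time positivity of $v_\varepsilon$.
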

\begin{proof}
Invoking Lemmas \ref{lem-4.25-1239}, \ref{5.3-2039} to gain $c_1=c_1(K)>0$ such that
\begin{align*}
\left\|v_{\varepsilon}(\cdot,t)\right\|_{L^{\infty}(\Omega)}+\left\|\nabla v_{\varepsilon}(\cdot,t)\right\|_{L^{\infty}(\Omega)} \leq c_1 \quad \text { for all } t>0 \text { and } \varepsilon \in(0,1) .
\end{align*}
Given $\phi \in W^{3,2}(\Omega)$ satisfying $\|\phi\|_{W^{3,2}(\Omega)} \leq 1$, which implies the existence of $c_2>0$ such that $\|\phi\|_{L^{\infty}(\Omega)}+\|\nabla \phi\|_{L^{\infty}(\Omega)} \leq c_2$ by the Sobolev inequalities, we obtain from \eqref{-2.5} that,
\begin{align*}
\int_{\Omega} \partial_t\left(u_{\varepsilon}^{\frac{p+1}{2}} v_{\varepsilon}\right) \cdot \phi= & -\frac{p+1}{2} \cdot\left\{\frac{p-1}{2} \int_{\Omega} u_{\varepsilon}^{\frac{p-1}{2}} v_{\varepsilon}^2\left|\nabla u_{\varepsilon}\right|^2 \phi+\int_{\Omega} u_{\varepsilon}^{\frac{p+1}{2}} v_{\varepsilon}\left(\nabla u_{\varepsilon} \cdot \nabla v_{\varepsilon}\right) \phi\right. \\
& +\int_{\Omega} u_{\varepsilon}^{\frac{p+1}{2}} v_{\varepsilon}^2\left(\nabla u_{\varepsilon} \cdot \nabla \phi\right)-\frac{p-1}{2} \int_{\Omega} u_{\varepsilon}^{\frac{p+1}{2}} v_{\varepsilon}^2\left(\nabla u_{\varepsilon} \cdot \nabla v_{\varepsilon}\right) \phi \\
& \left.-\int_{\Omega} u_{\varepsilon}^{\frac{p+3}{2}} v_{\varepsilon}\left|\nabla v_{\varepsilon}\right|^2 \phi-\int_{\Omega} u_{\varepsilon}^{\frac{p+3}{2}} v_{\varepsilon}^2\left(\nabla v_{\varepsilon} \cdot \nabla \phi\right)\right\} \\
& -\frac{p+1}{2} \int_{\Omega} u_{\varepsilon}^{\frac{p-1}{2}}\left(\nabla u_{\varepsilon} \cdot \nabla v_{\varepsilon}\right) \phi-\int_{\Omega} u_{\varepsilon}^{\frac{p+1}{2}}\left(\nabla v_{\varepsilon} \cdot \nabla \phi\right) \\
& +\frac{(p+1) \ell}{2} \int_{\Omega} u_{\varepsilon}^{\frac{p+1}{2}} v_{\varepsilon}^2 \phi-\int_{\Omega} u_{\varepsilon}^{\frac{p+3}{2}} v_{\varepsilon} \phi \\
= & -\frac{p+1}{2} \sum_{i=1}^6 I_i(\varepsilon)+\sum_{i=7}^{10} I_i(\varepsilon) \quad \text { for all } t>0 \text { and } \varepsilon \in(0,1) .
\end{align*}
We use Young's inequality to see that for any choice $\varepsilon \in(0,1)$
\begin{align*}
\left|I_1(\varepsilon)\right| & =\frac{p-1}{2}\left|\int_{\Omega} u_{\varepsilon}^{\frac{p-1}{2}} v_{\varepsilon}^2 |\nabla u_{\varepsilon}|^2 \phi \right| \\
& \leq \frac{c_2(p-1)}{2}\left\{\int_{\Omega} u_{\varepsilon}^{p-1} v_{\varepsilon}^2\left|\nabla u_{\varepsilon}\right|^2+\int_{\Omega} v_{\varepsilon}^2\left|\nabla u_{\varepsilon}\right|^2\right\} \\
& \leq \frac{c_1 c_2(p-1)}{2}\left\{\int_{\Omega} u_{\varepsilon}^{p-1} v_{\varepsilon}\left|\nabla u_{\varepsilon}\right|^2+\int_{\Omega} v_{\varepsilon}\left|\nabla u_{\varepsilon}\right|^2\right\} 
\end{align*}
and
\begin{align*}
\left|I_2(\varepsilon)\right|+\left|I_3(\varepsilon)\right| & =\int_{\Omega} u_{\varepsilon}^{\frac{p+1}{2}} v_{\varepsilon}\left(\nabla u_{\varepsilon} \cdot \nabla v_{\varepsilon}\right) \psi+\int_{\Omega} u_{\varepsilon}^{\frac{p+1}{2}} v_{\varepsilon}^2 \nabla u_{\varepsilon} \cdot \nabla \psi\\
& \leq 2 c_2 \int_{\Omega} u_{\varepsilon}^{p-1} v_{\varepsilon}\left|\nabla u_{\varepsilon}\right|^2+2 c_2 c^2_1 \int_{\Omega} u_{\varepsilon}^2 v_{\varepsilon},
\end{align*}
so that it follows from Cauchy-Schwarz inequality and Young's inequality that
\begin{align*}
\left|I_4(\varepsilon)\right| & =\left|-\frac{p-1}{2} \int_{\Omega} u_{\varepsilon}^{\frac{p+1}{2}} v_{\varepsilon}^2\left(\nabla u_{\varepsilon} \cdot \nabla v_{\varepsilon}\right) \phi\right| \\
& \leq \frac{c_2(p-1)}{4}\left\{\int_{\Omega} u_{\varepsilon}^{p-1} v_{\varepsilon}\left|\nabla u_{\varepsilon}\right|^2+c^4_1\int_{\Omega} u_{\varepsilon}^{2} v_{\varepsilon}\right\}
\quad \text { for all } \varepsilon \in(0,1),
\end{align*}
and
\begin{align*}
\left|I_5(\varepsilon)\right|+\left|I_6(\varepsilon)\right| & =\left|-\int_{\Omega} u_{\varepsilon}^{\frac{p+3}{2}} v_{\varepsilon} |\nabla v_{\varepsilon}|^2 \phi \right|+\left|-\int_{\Omega} u_{\varepsilon}^{\frac{p+3}{2}} v^2_{\varepsilon}(\nabla v_{\varepsilon} \cdot \nabla \phi) \right|, \\
& \leq c_2\left\{\int_{\Omega} u_{\varepsilon}^{p+1} v_{\varepsilon}\left|\nabla v_{\varepsilon}\right|^2+2 c^2_1 \int_{\Omega} u^{2}_{\varepsilon} v_{\varepsilon}+c^2_1 \int_{\Omega} u_{\varepsilon}^{p+1} v_{\varepsilon}\right\}  \quad \text { for all } \varepsilon \in(0,1)
\end{align*}
as well as 
\begin{align*}
|I_7(\varepsilon)|+ |I_8(\varepsilon)|& =\left|-\frac{p+1}{2} \int_{\Omega} u_{\varepsilon}^{\frac{1}{2}}\left(\nabla u_{\varepsilon} \cdot \nabla v_{\varepsilon}\right) \psi \right|-\left|\int_{\Omega} u_{\varepsilon}^{\frac{p-1}{2}} \nabla v_{\varepsilon} \cdot \nabla \psi \right| \\
& \leq \frac{c_2(p+1)}{2} \int_{\Omega} u_{\varepsilon}^{\frac{1}{2}}|\nabla u_{\varepsilon}| |\nabla v_{\varepsilon}| + c_2 \int_{\Omega} u_{\varepsilon}^{\frac{p-1}{2}} |\nabla v_{\varepsilon}| \\
& \leq \frac{c_2(p+1)}{2}  \left\{\int_{\Omega} \frac{u_{\varepsilon}}{v_{\varepsilon}}|\nabla v_{\varepsilon}|^2+\int_{\Omega} v_{\varepsilon} |\nabla
u_{\varepsilon}|^2 \right\} + c_2 \left\{\int_{\Omega}u^{p-2}_{\varepsilon} v_{\varepsilon} + \frac{u_{\varepsilon}}{v_{\varepsilon}}|\nabla v_{\varepsilon}|^2  \right\}\\
& \leq \frac{c_2(p+1)}{2}  \left\{\int_{\Omega} \frac{u_{\varepsilon}}{v_{\varepsilon}}|\nabla v_{\varepsilon}|^2+\int_{\Omega} v_{\varepsilon} |\nabla
u_{\varepsilon}|^2 \right\} + c_2 \left\{\int_{\Omega}u^{p+1}_{\varepsilon} v_{\varepsilon} + \frac{u_{\varepsilon}}{v_{\varepsilon}}|\nabla v_{\varepsilon}|^2+ \int_{\Omega} v_{\varepsilon} \right\}.
\end{align*}
Similarly, 
\begin{align*}
|I_7(\varepsilon)|+ |I_8(\varepsilon)|& =\left|\frac{(p+1) \ell}{2} \int_{\Omega} u_{\varepsilon}^{\frac{p+1}{2}} v_{\varepsilon}^2 \psi \right| + \left| -\int_{\Omega} u_{\varepsilon}^{\frac{p+3}{2}} v_{\varepsilon} \psi\right|\\
\leq & \frac{c_2 (p+1) \ell}{2} \int_{\Omega} u_{\varepsilon}^{\frac{p+1}{2}} v_{\varepsilon}^2 + c_2 \int_{\Omega} u_{\varepsilon}^{\frac{p+3}{2}} v_{\varepsilon}\\
\leq & \frac{c_2c_1 (p+1) \ell}{2}  \left\{\int_{\Omega} u_{\varepsilon}^{p+1} v_{\varepsilon} + c^2_1\int_{\Omega}v_{\varepsilon} \right\} + c_2 \left\{ \int_{\Omega} u_{\varepsilon}^{p+1} v_{\varepsilon}+ \int_{\Omega} u^2_{\varepsilon} v_{\varepsilon} \right\}.
\end{align*}
Thus we could find a positive constant $c_3=c_3(K)$ such that
\begin{align*}
& \left\|\partial_t\left(u_{\varepsilon}^{\frac{p+1}{2}} v_{\varepsilon}\right)\right\|_{\left(W^{3,2}(\Omega)\right)^{\star}} \\
= & \sup_{\|\phi\|_{W^{3,2}(\Omega)} \leq 1}\left|\int_{\Omega} \partial_t\left(u_{\varepsilon}^{\frac{p+1}{2}} v_{\varepsilon}\right) \cdot \phi\right| \\
\leq &   c_3 \cdot\left\{\int_{\Omega} u_{\varepsilon}^{p-1} v_{\varepsilon}\left|\nabla u_{\varepsilon}\right|^2+\int_{\Omega} u_{\varepsilon}^{p+1} v_{\varepsilon}+\int_{\Omega} \frac{u_{\varepsilon}}{v_{\varepsilon}}|\nabla v_{\varepsilon}|^2
+ \int_{\Omega} v_{\varepsilon}\left|\nabla v_{\varepsilon}\right|^2+\int_{\Omega} v_{\varepsilon}\left|\nabla u_{\varepsilon}\right|^2+\int_{\Omega} v_{\varepsilon}\right\}
\end{align*}
for all $\varepsilon \in(0,1)$, due to $\int_{\Omega} u_{\varepsilon}^{2} v_{\varepsilon} \leq \int_{\Omega} u_{\varepsilon}^{p+1} v_{\varepsilon}+\int_{\Omega} v_{\varepsilon}$. As a consequence of \eqref{n-3.5}, \eqref{n-3.9}, \eqref{4.23-n3.10}, \eqref{4.7-611} and Lemma \ref{5.3-2039}, we establish \eqref{5.3-2020} upon integration in time. According to the second equation in \eqref{-2.5}, \eqref{-2.9}, \eqref{-2.10} and \eqref{4.25-1106}, we have with $c_4>0$, 
\begin{align*}
\frac{d}{d t} \int_{\Omega} \ln \frac{\left\|v_0\right\|_{L^{\infty}(\Omega)}}{v_{\varepsilon}}=-\frac{d}{d t} \int_{\Omega} \ln v_{\varepsilon}& =-\int_{\Omega} \frac{\left|\nabla v_{\varepsilon}\right|^2}{v_{\varepsilon}^2}+ \int_{\Omega} u_{\varepsilon}\\
& \leq c_1 \int_{\Omega} u_{\varepsilon} v_{\varepsilon}
\quad \text { for all } t>0 \text { and } \varepsilon \in(0,1),
\end{align*}
which upon an integration in time already yields \eqref{512-1958}. To verify \eqref{513-935}, we note that from \eqref{n-3.7} and Lemma \ref{lem-4.25-1259} it follows that with some $c_5>0$ we have
\begin{align*}
\int_{\Omega}\left(\ln \frac{1}{u_{\varepsilon}(t)}\right)_{+} & =-\int_{\Omega} \ln u_{\varepsilon}(t)+\int_{\left\{u_{\varepsilon}(t)>1\right\}} \ln u_{\varepsilon}(t) \\
& \leq-\int_{\Omega} \ln u_{\varepsilon}(t)+|\Omega| \cdot \ln \left\|u_{\varepsilon}\right\|_{L^{\infty}(\Omega \times(0, \infty))} \\
& \leq c_5 \quad \text { for all } t>0 \text { and } \varepsilon \in(0,1).
\end{align*}
\end{proof}

Since the boundedness of $u_{\varepsilon}$ and $v_{\varepsilon}$ asserted in Lemmas  \ref{lem-4.25-1239}, \ref{lem-4.25-1259} and \ref{512-1649}, H\"{o}lder estimates of $u_{\varepsilon}$, $v_{\varepsilon}$ and $\nabla v_{\varepsilon}$ can be derived from standard parabolic regularity theory.
\begin{lem}\label{jia1}
Suppose that $u_0$ and $v_0$ satisfy \eqref{th1-2}. Then for each $T>0$, there exist $\theta_1=\theta(T) \in(0,1)$ and $C_1(T)>0$ such that
\begin{align}\label{eq4.31-xxx}
\left\|u_{\varepsilon}\right\|_{c^{\theta_1, \frac{\theta_1}{2}}(\bar{\Omega} \times[0, T])} \leq C_1(T) \quad \text { for all } \varepsilon \in(0,1)
\end{align}
and
\begin{align}\label{eq4.31-xxxx}
\left\|v_{\varepsilon}\right\|_{C^{\theta_1, \frac{\theta_1}{2}}(\bar{\Omega} \times[0, T])} \leq C_1(T) \quad \text { for all } \varepsilon \in(0,1) .
\end{align}
Moreover, for each $\tau>0$ and all $T>\tau$, there exist $\theta_2=\theta_2(\tau, T) \in(0,1)$ and $C_2(\tau, T)>0$ such that
\begin{align}\label{eq4.31-xxxxx}
\left\|v_{\varepsilon}\right\|_{C^{2+\theta_2, 1+\frac{\theta_2}{2}}(\bar{\Omega} \times [\tau, T])} \leq C_2(\tau, T) \quad \text { for all } \varepsilon \in(0,1) \text {. }
\end{align}
\end{lem}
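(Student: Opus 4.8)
The plan is to treat the two equations of \eqref{-2.5} in turn, exploiting that on every finite time strip $\bar\Omega\times[0,T]$ the quantities $u_\varepsilon$, $v_\varepsilon$ and $\nabla v_\varepsilon$ are bounded, and $v_\varepsilon$ is moreover bounded away from zero, \emph{uniformly in} $\varepsilon\in(0,1)$, by Lemmas \ref{lem-4.25-1239}, \ref{lem-4.25-1259}, \ref{5.3-2039} and \ref{512-1649}. First I would prove \eqref{eq4.31-xxxx}: regarding the second equation as the scalar linear problem $v_{\varepsilon t}=\Delta v_\varepsilon+g_\varepsilon$ with $g_\varepsilon:=-u_\varepsilon v_\varepsilon$, the inhomogeneity $g_\varepsilon$ is bounded in $L^\infty(\Omega\times(0,T))$ uniformly in $\varepsilon$ since $\|u_\varepsilon(\cdot,t)\|_{L^\infty(\Omega)}$ and $\|v_\varepsilon(\cdot,t)\|_{L^\infty(\Omega)}$ are; combined with the homogeneous Neumann condition, the smoothness of $\partial\Omega$ and the Lipschitz regularity of $v_0\in W^{1,\infty}(\Omega)\hookrightarrow C^{0,1}(\bar\Omega)$, standard parabolic regularity for linear equations with bounded data — for instance $L^p$ maximal regularity together with the parabolic Sobolev embedding, or De Giorgi--Nash--Moser estimates up to the parabolic boundary — yields \eqref{eq4.31-xxxx} with some $\theta_1\in(0,1)$ and $C_1(T)>0$ independent of $\varepsilon$.

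Next, for \eqref{eq4.31-xxx} I would regard the first equation as the divergence-form quasilinear problem
\begin{align*}
u_{\varepsilon t}=\nabla\cdot\big(u_\varepsilon v_\varepsilon\nabla u_\varepsilon-\chi u_\varepsilon^{2}v_\varepsilon\nabla v_\varepsilon\big)+\ell u_\varepsilon v_\varepsilon
\end{align*}
with no-flux boundary condition. By Lemmas \ref{512-1649} and \ref{lem-4.25-1239} the factor $v_\varepsilon$ is trapped between two positive constants on $\bar\Omega\times[0,T]$ and $|\nabla v_\varepsilon|$ is bounded there, both uniformly in $\varepsilon$; hence, after absorbing the cross term by Young's inequality, the flux $\vec A_\varepsilon:=u_\varepsilon v_\varepsilon\nabla u_\varepsilon-\chi u_\varepsilon^{2}v_\varepsilon\nabla v_\varepsilon$ obeys the porous-medium-type structure conditions $\vec A_\varepsilon\cdot\nabla u_\varepsilon\geq c(T)\,u_\varepsilon|\nabla u_\varepsilon|^{2}-C(T)$ and $|\vec A_\varepsilon|\leq C(T)\,u_\varepsilon|\nabla u_\varepsilon|+C(T)$, together with $|\ell u_\varepsilon v_\varepsilon|\leq C(T)$, where the constants depend only on $T$ through Lemmas \ref{lem-4.25-1259}, \ref{lem-4.25-1239}, \ref{512-1649} and not on $\varepsilon$. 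Since the a priori bound $0\le u_\varepsilon\le C$ is already available and $u_0\in C^{0,1}(\bar\Omega)$, the H\"older continuity theory for degenerate parabolic equations of porous medium type — intrinsic scaling, in the spirit of DiBenedetto — provides interior and boundary H\"older estimates which, using the Lipschitz regularity of the initial datum to reach $t=0$, combine into \eqref{eq4.31-xxx}; after shrinking $\theta_1$ if necessary, the same exponent serves in \eqref{eq4.31-xxxx}.

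Finally, \eqref{eq4.31-xxxxx} follows by a bootstrap. Once \eqref{eq4.31-xxx} and \eqref{eq4.31-xxxx} are known, the product $u_\varepsilon v_\varepsilon$ lies in $C^{\theta_1,\theta_1/2}(\bar\Omega\times[0,T])$ with a bound independent of $\varepsilon$, so in the equation $v_{\varepsilon t}=\Delta v_\varepsilon-(u_\varepsilon v_\varepsilon)$ the inhomogeneity is uniformly H\"older continuous; parabolic Schauder estimates up to the smooth boundary, applied on $\bar\Omega\times[\tau,T]$ so as to stay away from the initial layer, then upgrade $v_\varepsilon$ to $C^{2+\theta_2,1+\theta_2/2}(\bar\Omega\times[\tau,T])$ for some $\theta_2=\theta_2(\tau,T)\in(0,1)$ and $C_2(\tau,T)>0$, again independent of $\varepsilon$.

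I expect the main obstacle to be \eqref{eq4.31-xxx}: because the diffusion coefficient $u_\varepsilon v_\varepsilon$ in the first equation degenerates wherever $u_\varepsilon$ becomes small and no positive lower bound for $u_\varepsilon$ uniform in $\varepsilon$ is available, classical linear or uniformly parabolic quasilinear regularity does not apply, and one must instead invoke the considerably more delicate H\"older theory for genuinely degenerate parabolic equations. The crucial point there is to verify that all structure constants — above all the two-sided bound on $v_\varepsilon$ from Lemma \ref{512-1649} and the gradient bound from Lemma \ref{lem-4.25-1239} — are uniform in $\varepsilon$, so that $\theta_1$ and $C_1(T)$ do not deteriorate as $\varepsilon\to0$.
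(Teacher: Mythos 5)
Your proposal is correct and follows essentially the same route as the paper: the paper likewise obtains \eqref{eq4.31-xxx} and \eqref{eq4.31-xxxx} from the uniform bounds of Lemmas \ref{lem-4.25-1239}, \ref{lem-4.25-1259} and \ref{512-1649} via the H\"older regularity theory for degenerate parabolic equations of DiBenedetto type (citing Porzio--Vespri), and then gets \eqref{eq4.31-xxxxx} from parabolic Schauder estimates combined with the H\"older bound on $u_\varepsilon v_\varepsilon$ and a cut-off argument away from $t=0$. Your write-up merely makes explicit the structure-condition verification and the uniform-in-$\varepsilon$ dependence of the constants that the paper leaves to the cited references.
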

\begin{proof}
It follows from Lemmas \ref{lem-4.25-1239}, \ref{lem-4.25-1259}, \ref{lem-4.25-1259} and H\"{o}lder regularity \cite{1993-JDE-PorzioVespri} that \eqref{eq4.31-xxx} and \eqref{eq4.31-xxxx} hold. Combining standard Schauder estimates \cite{1968-Ladyzen} with  \eqref{eq4.31-xxx} and a cut-off argument, we can deduce the validity of \eqref{eq4.31-xxxxx}.
\end{proof}

Based on the preparation in the above, we can now use a standard extraction procedure to construct a limit function $(u, v)$ of \eqref{-2.5}, which is proved to be a global weak bounded solution to the system \eqref{-A1} as documented in Theorem \ref{th1}.

\begin{lem}\label{lem-4.18}
Suppose that \eqref{th1-2} holds. Then there exist $\left(\varepsilon_j\right){ }_{j \in \mathbb{N}} \subset(0,1)$ as well as functions $u$ and $v$ which satisfy \eqref{5.4-1429} with $u > 0$ and $v>0$ a.e. in $\Omega \times(0, \infty)$, such that
\begin{flalign}
& u_{\varepsilon} \rightarrow u \quad \text { in } C_{l o c}^0(\bar{\Omega} \times[0, \infty)) \text {, }\label{eq4.34-xx}\\
& v_{\varepsilon} \rightarrow v \quad \text { in } C_{l o c}^0(\bar{\Omega} \times[0, \infty)) \text { and in } C_{l o c}^{2,1}(\bar{\Omega} \times(0, \infty)) ,\label{eq4.36}\\
& \nabla v_{\varepsilon} \stackrel{\star}{\rightharpoonup} \nabla v \quad \text { in } L^{\infty}(\Omega \times(0, \infty)),\label{eq4.34}\\
&  u_{\varepsilon} \rightarrow u \quad \text { in } L_{l o c}^p(\bar{\Omega} \times[0, \infty)) \text { for all } p \in[1, \infty),\label{eq4.33-aa} \\
&  u_{\varepsilon} \rightarrow u \quad \text { a.e. in } \Omega \times(0, \infty),\label{eq4.33} 
\end{flalign}
as $\varepsilon=\varepsilon_j \searrow 0$, and that $(u, v)$ is a global weak solution of (\ref{-A1}) according to Definition \ref{def2.1}, and we have
\begin{align}\label{eq4.381}
\int_0^{\infty} \int_{\Omega} u v \leq \int_{\Omega} v_0
\end{align}
and 
\begin{align}\label{eq4.38}
\int_{\Omega} u_{0} \leq \int_{\Omega} u(t) \leq \int_{\Omega} u_{0}+\ell \int_{\Omega} v_{0} \text {  and  } \|v(t)\|_{L^{\infty}(\Omega)} \leq\left\|v_0\right\|_{L^{\infty}(\Omega)} \quad \text { for all } t>0,
\end{align}
\end{lem}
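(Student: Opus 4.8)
The plan is to let $\varepsilon=\varepsilon_j\searrow0$ along a suitable subsequence in the family $(u_\varepsilon,v_\varepsilon)$ of global classical solutions from Lemma~\ref{lemma-2.1} and Lemma~\ref{5.3-2039}, and to verify that the limit is a weak solution with the listed properties. First we invoke the uniform H\"older bounds of Lemma~\ref{jia1}: by \eqref{eq4.31-xxx}, \eqref{eq4.31-xxxx} and the Arzel\`a--Ascoli theorem, together with a diagonal extraction, we obtain a null sequence $(\varepsilon_j)_{j\in\mathbb{N}}$ and limit functions $u\in C^0(\bar\Omega\times[0,\infty))$ and $v\in C^0(\bar\Omega\times[0,\infty))$ fulfilling \eqref{eq4.34-xx} and the $C^0_{loc}$ part of \eqref{eq4.36}, while \eqref{eq4.31-xxxxx} upgrades this to the convergence in $C^{2,1}_{loc}(\bar\Omega\times(0,\infty))$ in \eqref{eq4.36} and shows $v\in C^{2,1}(\bar\Omega\times(0,\infty))$. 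Since $u_\varepsilon\to u$ pointwise on $\bar\Omega\times[0,\infty)$, \eqref{eq4.33} holds; the gradient bound \eqref{4.24-2241} of Lemma~\ref{lem-4.25-1239}, after a further extraction, gives the weak-$\star$ convergence \eqref{eq4.34}, the limit being $\nabla v$ by virtue of \eqref{eq4.36}; and combining \eqref{eq4.33} with the uniform $L^\infty$-bound of Lemma~\ref{lem-4.25-1259} and dominated convergence yields \eqref{eq4.33-aa} (alternatively, the time regularity \eqref{5.3-2020} of Lemma~\ref{lem-5.3-2014} together with an Aubin--Lions argument provides the same compactness).

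For the positivity, the pointwise lower bound \eqref{4.25-1106} of Lemma~\ref{512-1649} passes along \eqref{eq4.36}, so that $v\geq C(T)>0$ in $\Omega\times(0,T)$ for each $T>0$ and hence $v>0$ a.e.\ in $\Omega\times(0,\infty)$. For $u$, we apply Fatou's lemma in \eqref{513-935}, using the pointwise convergence $u_\varepsilon(x,t)\to u(x,t)$, to obtain $\int_\Omega\bigl(\ln\frac{1}{u(\cdot,t)}\bigr)_+<\infty$ for every $t>0$, whence $u(\cdot,t)>0$ a.e.\ for every $t>0$ and thus $u>0$ a.e.\ in $\Omega\times(0,\infty)$ by Fubini.

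The heart of the argument is the limit passage in the weak formulations. For the second equation this is immediate from the $C^{2,1}_{loc}$ convergence in \eqref{eq4.36}; indeed $v$ then solves $v_t=\Delta v-uv$ classically in $\Omega\times(0,\infty)$, which in particular gives \eqref{-2.4} and the part of \eqref{-2.1} concerning $v$. For the first equation, after an integration by parts the delicate contributions in \eqref{-2.3} are the degenerate diffusion term, which involves $v_\varepsilon\nabla u_\varepsilon^2$, and the cross-diffusion term, which involves $u_\varepsilon^2 v_\varepsilon\nabla v_\varepsilon$. The key observation is that Lemma~\ref{lem-4.41a} with $p=3$ (applicable since \eqref{th1-2} holds) bounds $\nabla(u_\varepsilon^2 v_\varepsilon)$ in $L^2(\Omega\times(0,\infty))$; writing $\nabla u_\varepsilon^2=v_\varepsilon^{-1}\bigl(\nabla(u_\varepsilon^2 v_\varepsilon)-u_\varepsilon^2\nabla v_\varepsilon\bigr)$ and using the strict positivity \eqref{4.25-1106} together with the $L^\infty$ bounds from Lemma~\ref{lem-4.25-1239} and Lemma~\ref{lem-4.25-1259}, the family $(\nabla u_\varepsilon^2)_\varepsilon$ is bounded in $L^2_{loc}(\bar\Omega\times[0,\infty))$; hence, after a further extraction, $\nabla u_\varepsilon^2\rightharpoonup\nabla u^2$ weakly in $L^2_{loc}$, the limit being identified through the strong convergence $u_\varepsilon^2\to u^2$ in $L^1_{loc}$, and this also yields \eqref{-2.2}. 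Consequently $v_\varepsilon\nabla u_\varepsilon^2\rightharpoonup v\nabla u^2$ weakly in $L^2_{loc}$ since $v_\varepsilon\to v$ locally uniformly, and $u_\varepsilon^2 v_\varepsilon\nabla v_\varepsilon\stackrel{\star}{\rightharpoonup}u^2 v\nabla v$ since $u_\varepsilon^2 v_\varepsilon\to u^2 v$ locally uniformly while $\nabla v_\varepsilon\stackrel{\star}{\rightharpoonup}\nabla v$ by \eqref{eq4.34}. Together with the elementary convergences $\int u_\varepsilon\varphi_t\to\int u\varphi_t$, $\int_\Omega u_{0\varepsilon}\varphi(\cdot,0)\to\int_\Omega u_0\varphi(\cdot,0)$ and $\ell\int u_\varepsilon v_\varepsilon\varphi\to\ell\int uv\varphi$, these establish \eqref{-2.3}, so that $(u,v)$ is a global weak solution in the sense of Definition~\ref{def2.1}. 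Finally, \eqref{eq4.381} follows by letting $\varepsilon=\varepsilon_j\searrow0$ in the inequality $\int_0^T\int_\Omega u_\varepsilon v_\varepsilon\leq\int_\Omega v_{0\varepsilon}=\int_\Omega v_0$ implied by \eqref{-2.10}, and then sending $T\to\infty$ by monotone convergence, while \eqref{eq4.38} results from passing to the limit in \eqref{-2.8} and \eqref{-2.9} with $t_0=0$, using that \eqref{eq4.34-xx} and \eqref{eq4.36} provide convergence of $u_\varepsilon(\cdot,t)$ and $v_\varepsilon(\cdot,t)$ uniformly on $\bar\Omega$ for each fixed $t>0$.

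The principal obstacle is the limit passage in the doubly degenerate terms $v_\varepsilon\nabla u_\varepsilon^2$ and $u_\varepsilon^2 v_\varepsilon\nabla v_\varepsilon$, for which no strong compactness of $\nabla u_\varepsilon$ is available. This is circumvented by working with the composite quantity $u_\varepsilon^2 v_\varepsilon$, whose spatial gradient is controlled in $L^2(\Omega\times(0,\infty))$ by \eqref{4.7-61111}, and by exploiting the strict positivity \eqref{4.25-1106} of $v_\varepsilon$ to transfer this bound to $\nabla u_\varepsilon^2$ and thereby identify the weak limits of the nonlinear flux terms.
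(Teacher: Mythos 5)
Your proposal is correct, and its overall skeleton coincides with the paper's: extraction via the H\"older bounds of Lemma \ref{jia1} and the gradient bound of Lemma \ref{lem-4.25-1239}, positivity of $u$ and $v$ from \eqref{513-935}, \eqref{4.25-1106} (resp. \eqref{512-1958}) via Fatou, the mass/decay relations \eqref{eq4.381}--\eqref{eq4.38} from \eqref{-2.8}--\eqref{-2.10}, and a limit passage in the flux terms resting on a Sobolev bound for $u_\varepsilon^2$. The two places where you deviate are worth recording. First, you obtain the bound on $\nabla u_\varepsilon^2$ by invoking \eqref{4.7-61111} with $p=3$ to control $\nabla(u_\varepsilon^2 v_\varepsilon)$ in $L^2(\Omega\times(0,\infty))$ and then dividing by $v_\varepsilon$, using the time-local lower bound \eqref{4.25-1106} together with the $L^\infty$ bounds of Lemmas \ref{lem-4.25-1239} and \ref{lem-4.25-1259}; the paper instead estimates $\int_0^T\!\int_\Omega|\nabla u_\varepsilon^2|^r$ directly in \eqref{512-1733} by Young's inequality from \eqref{4.7-611}, using the same lower bound and $L^\infty$ control, arriving at boundedness of $u_\varepsilon^2$ in $L^r((0,T);W^{1,r}(\Omega))$ for $r\in(1,2)$ (see \eqref{512-1734}). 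Both inputs are available and both suffice for identifying the limits of $v_\varepsilon\nabla u_\varepsilon^2$ and $u_\varepsilon^2 v_\varepsilon\nabla v_\varepsilon$ against compactly supported test functions; your version yields the slightly stronger $L^2_{loc}$ integrability and makes \eqref{-2.2} immediate. Second, you dispense with the paper's Aubin--Lions step: the paper additionally passes $u_\varepsilon^{\frac{p+1}{2}}v_\varepsilon\to z$ via \eqref{5.3-2020} and Aubin--Lions, sets $u=(z/v)^{2/(p+1)}$, and uses Vitali to get \eqref{eq4.33-aa}, whereas you read the a.e. and $L^p_{loc}$ convergence of $u_\varepsilon$ directly off the uniform convergence furnished by Lemma \ref{jia1}; given that lemma this shortcut is legitimate and arguably cleaner, the paper's detour being the route one would need if only the weaker compactness information were at hand. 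You are also more explicit than the paper (which labels this step a ``straightforward consequence'') in justifying the weak/weak-$\star$ limit passage in the two degenerate flux terms, which is exactly the point that deserves the detail.
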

\begin{proof}
We fix $r \in(1,2)$, $p \in (1,3)$ and employ Young's inequality to estimate
\begin{align}\label{512-1733}
& \int_0^T \int_{\Omega}\left|\nabla u_{\varepsilon}^2\right|^r  \nonumber\\
= & 2^r \int_0^T \int_{\Omega}\left(u_{\varepsilon}^{p-1} v_{\varepsilon}\left|\nabla u_{\varepsilon}\right|^2\right)^{\frac{r}{2}} \cdot u_{\varepsilon}^{\frac{\left(3-p\right) r}{2}} v_{\varepsilon}^{-\frac{r}{2}} \nonumber\\
\leq &  2^r \int_0^T \int_{\Omega} u_{\varepsilon}^{p-1} v_{\varepsilon}\left|\nabla u_{\varepsilon}\right|^2+2^r \int_0^T \int_{\Omega} u_{\varepsilon}^{\frac{\left(3-p\right) r}{2-r}} v_{\varepsilon}^{-\frac{r}{2-r}} \quad \text { for all } T>0 \text { and } \varepsilon \in(0,1),
\end{align}
so that since $\frac{\left(3-p\right) r}{2-r}$ is nonnegative, we use Lemma \ref{lem-4.25-1259} to see that due to \eqref{4.7-611} and Lemma \ref{512-1649},
\begin{align}\label{512-1734}
\left(u_{\varepsilon}^2\right)_{\varepsilon \in(0,1)} \text { is bounded in } L^r\left((0, T) ; W^{1, r}(\Omega)\right) \quad \text { for all } T>0 \text {. }
\end{align}

The existence of $\left(\varepsilon_j\right)_{j \in \mathbb{N}}$ and nonnegative functions $u$ and $v$ with the properties in \eqref{5.4-1429} and \eqref{eq4.34-xx}-\eqref{eq4.34} follows from Lemmas \ref{lem-4.25-1239}, \ref{jia1} by a straightforward extraction procedure.  \eqref{513-935} together with \eqref{th1-2} as well as \eqref{eq4.34-xx} and Fatou’s lemma ensures that $u$ is positive a.e. in $\Omega \times(0, \infty)$. From \eqref{-2.8},
\eqref{-2.9}, \eqref{-2.10} in conjunction with \eqref{eq4.34-xx}, \eqref{eq4.36} and Fatou’s lemma, we have \eqref{eq4.38} and \eqref{eq4.381}.  

In view of \eqref{4.7-6111}, \eqref{4.7-61111} and Lemma \ref{lem-5.3-2014} and Aubin-Lions lemma (see \cite{1977-nhp-Temam1}) as well as $W^{1,2}(\Omega) \hookrightarrow  \hookrightarrow L^2(\Omega)$, there exists a nonnegative function $z \in L_{l o c}^2(\bar{\Omega} \times[0, \infty))$ such that 
\begin{align}\label{512-2014}
u_{\varepsilon}^{\frac{p+1}{2}} v_{\varepsilon} \rightarrow z \quad \text { a.e. in } \Omega \times(0, \infty) \text { and in } L_{l o c}^2(\bar{\Omega} \times[0, \infty))
\end{align}
as $\varepsilon=\varepsilon_j \searrow 0$. By \eqref{-2.9}, \eqref{eq4.36} and Fatou's lemma, we have $v \leq\left\|v_0\right\|_{L^{\infty}(\Omega)}$. \eqref{512-1958} together with Fatou's lemma guarantees that $\ln v$ belongs to $L_{l o c}^1(\bar{\Omega} \times[0, \infty))$ and $v$ is positive a.e. in $\Omega \times(0, \infty)$. Letting $u:=\left(\frac{z}{v}\right)^{\frac{2}{p+1}}$, we obtain an a.e. in $\Omega \times(0, \infty)$ well-defined nonnegative function $u$ for which we have $u_{\varepsilon}=\left(\frac{z_{\varepsilon}}{v_{\varepsilon}}\right)^{\frac{2}{p+1}} \rightarrow u$ a.e. in $\Omega \times(0, \infty)$ as $\varepsilon=\varepsilon_j \searrow 0$ according to \eqref{eq4.36} and \eqref{512-2014}. Since \eqref{4.7-6} asserts that $\left(u_{\varepsilon}\right)_{\varepsilon \in(0,1)}$ is bounded in $L^{\infty}\left((0, \infty) ; L^p(\Omega)\right)$, this limit belong to this space, and moreover satisfy \eqref{eq4.33-aa} as a consequence of the Vitali convergence theorem.
Owing to \eqref{eq4.33-aa}, we can deduce that \eqref{eq4.33} holds.  

The regularity requirements \eqref{-2.1} and \eqref{-2.2} recorded in Definition \ref{def2.1}, \eqref{-2.3} and \eqref{-2.4} become straightforward consequences of \eqref{eq4.34-xx}-\eqref{512-1734}.
\end{proof}

Our main results about the global weak solvability are summarized as follows:
\begin{proof}[\emph{\textbf{Proof of Theorem  \ref{th1}.}}]
Theorem \ref{th1} is a direct consequence of Lemmas \ref{lem-4.25-1239}, \ref{lem-4.25-1259}  and \ref{lem-4.18}.
\end{proof}

\bibliographystyle{siam}

\bibliography{indrectsingball}

\begin{thebibliography}{10}

\bibitem{2015-MMMAS-BellomoBellouquidTaoWinkler}
{\sc N.~Bellomo, A.~Bellouquid, Y.~Tao, and M.~Winkler}, {\em Toward a
  mathematical theory of {K}eller-{S}egel models of pattern formation in
  biological tissues}, Math. Models Methods Appl. Sci., 25 (2015),
  pp.~1663--1763.

\bibitem{2000-AiP-Ben-JacobCohenLevine}
{\sc E.~Ben-Jacob, I.~Cohen, and H.~Levine}, {\em Cooperative self-organization
  of microorganisms}, Advances in Physics, 49 (2000), pp.~395--554.

\bibitem{2016-CVPDE-CaoLankeit}
{\sc X.~Cao and J.~Lankeit}, {\em Global classical small-data solutions for a
  three-dimensional chemotaxis {N}avier-{S}tokes system involving matrix-valued
  sensitivities}, Calc. Var. Partial Differential Equations, 55 (2016),
  pp.~Art. 107, 39.

\bibitem{1992-PASMaiA-Fujikawa}
{\sc H.~Fujikawa}, {\em Periodic growth of bacillus subtilis colonies on agar
  plates}, Physica A: Statistical Mechanics and its Applications, 189 (1992),
  pp.~15--21.

\bibitem{1989-JotPSoJ-FujikawaMatsushita}
{\sc H.~Fujikawa and M.~Matsushita}, {\em Fractal growth ofbacillus subtilison
  agar plates}, Journal of the Physical Society of Japan, 58 (1989),
  pp.~3875--3878.

\bibitem{1997-JoTB-Kawa}
{\sc K.~Kawasaki, A.~Mochizuki, M.~Matsushita, T.~Umeda, and N.~Shigesada},
  {\em Modeling spatio-temporal patterns generated bybacillus subtilis},
  Journal of Theoretical Biology, 188 (1997), pp.~177--185.

\bibitem{1968-Ladyzen}
{\sc O.~A. Lady\v{z}enskaja, V.~A. Solonnikov, and N.~N. Ural\'{ }ceva}, {\em
  Linear and quasilinear equations of parabolic type}, vol.~Vol. 23 of
  Translations of Mathematical Monographs, American Mathematical Society,
  Providence, RI, 1968.
\newblock Translated from the Russian by S. Smith.

\bibitem{2023-SAM-LankeitWinkler}
{\sc J.~Lankeit and M.~Winkler}, {\em Depleting the signal: analysis of
  chemotaxis-consumption models---a survey}, Stud. Appl. Math., 151 (2023),
  pp.~1197--1229.

\bibitem{2013-PA-LeyvaMalagaPlaza}
{\sc J.~F. Leyva, C.~M\'{a}laga, and R.~G. Plaza}, {\em The effects of nutrient
  chemotaxis on bacterial aggregation patterns with non-linear degenerate cross
  diffusion}, Phys. A, 392 (2013), pp.~5644--5662.

\bibitem{2022-JDE-Li}
{\sc G.~Li}, {\em Large-data global existence in a higher-dimensional doubly
  degenerate nutrient system}, J. Differential Equations, 329 (2022),
  pp.~318--347.

\bibitem{2022-CPAA-LiWinklera}
{\sc G.~Li and M.~Winkler}, {\em Nonnegative solutions to a doubly degenerate
  nutrient taxis system}, Commun. Pure Appl. Anal., 21 (2022), pp.~687--704.

\bibitem{1980-ARMA-Lions}
{\sc P.-L. Lions}, {\em R\'{e}solution de probl\`emes elliptiques
  quasilin\'{e}aires}, Arch. Rational Mech. Anal., 74 (1980), pp.~335--353.

\bibitem{1990-PASMaiA-MatsushitaFujikawa}
{\sc M.~Matsushita and H.~Fujikawa}, {\em Diffusion-limited growth in bacterial
  colony formation}, Physica A: Statistical Mechanics and its Applications, 168
  (1990), pp.~498--506.

\bibitem{1993-JDE-PorzioVespri}
{\sc M.~M. Porzio and V.~Vespri}, {\em H\"{o}lder estimates for local solutions
  of some doubly nonlinear degenerate parabolic equations}, J. Differential
  Equations, 103 (1993), pp.~146--178.

\bibitem{2011-JMAA-Tao}
{\sc Y.~Tao}, {\em Boundedness in a chemotaxis model with oxygen consumption by
  bacteria}, J. Math. Anal. Appl., 381 (2011), pp.~521--529.

\bibitem{2012-jde-taoyou}
{\sc Y.~Tao and M.~Winkler}, {\em Eventual smoothness and stabilization of
  large-data solutions in a three-dimensional chemotaxis system with
  consumption of chemoattractant}, J. Differential Equations, 252 (2012),
  pp.~2520--2543.

\bibitem{1977-nhp-Temam1}
{\sc R.~Temam}, {\em Navier-{S}tokes equations. {T}heory and numerical
  analysis}, Studies in Mathematics and its Applications, Vol. 2, North-Holland
  Publishing Co., Amsterdam-New York-Oxford, 1977.

\bibitem{2019-EJDE-WangLi}
{\sc H.~Wang and Y.~Li}, {\em Renormalized solutions to a chemotaxis system
  with consumption of chemoattractant}, Electron. J. Differential Equations,
  (2019), pp.~Paper No. 38, 19.

\bibitem{2010-JDE-Winkler}
{\sc M.~Winkler}, {\em Aggregation vs. global diffusive behavior in the
  higher-dimensional {K}eller-{S}egel model}, J. Differential Equations, 248
  (2010), pp.~2889--2905.

\bibitem{2012-CPDE-Winkler}
\leavevmode\vrule height 2pt depth -1.6pt width 23pt, {\em Global large-data
  solutions in a chemotaxis-({N}avier-){S}tokes system modeling cellular
  swimming in fluid drops}, Comm. Partial Differential Equations, 37 (2012),
  pp.~319--351.

\bibitem{2018-JDE-Winklera}
\leavevmode\vrule height 2pt depth -1.6pt width 23pt, {\em Global existence and
  stabilization in a degenerate chemotaxis-{S}tokes system with mildly strong
  diffusion enhancement}, J. Differential Equations, 264 (2018),
  pp.~6109--6151.

\bibitem{2021-TAMS-Winkler}
\leavevmode\vrule height 2pt depth -1.6pt width 23pt, {\em Does spatial
  homogeneity ultimately prevail in nutrient taxis systems? {A} paradigm for
  structure support by rapid diffusion decay in an autonomous parabolic flow},
  Trans. Amer. Math. Soc., 374 (2021), pp.~219--268.

\bibitem{2022-DCDSSB-Winkler}
\leavevmode\vrule height 2pt depth -1.6pt width 23pt, {\em Approaching
  logarithmic singularities in quasilinear chemotaxis-consumption systems with
  signal-dependent sensitivities}, Discrete Contin. Dyn. Syst. Ser. B, 27
  (2022), pp.~6565--6587.

\bibitem{2022-NARWA-Winkler}
\leavevmode\vrule height 2pt depth -1.6pt width 23pt, {\em Small-signal
  solutions of a two-dimensional doubly degenerate taxis system modeling
  bacterial motion in nutrient-poor environments}, Nonlinear Anal. Real World
  Appl., 63 (2022), pp.~Paper No. 103407, 21.

\bibitem{2022-CVPDE-Winklera}
\leavevmode\vrule height 2pt depth -1.6pt width 23pt, {\em Stabilization of
  arbitrary structures in a doubly degenerate reaction-diffusion system
  modeling bacterial motion on a nutrient-poor agar}, Calc. Var. Partial
  Differential Equations, 61 (2022), pp.~Paper No. 108, 25.

\bibitem{2024-JDE-Winkler}
\leavevmode\vrule height 2pt depth -1.6pt width 23pt, {\em {$L^\infty$} bounds
  in a two-dimensional doubly degenerate nutrient taxis system with general
  cross-diffusive flux}, J. Differential Equations, 400 (2024), pp.~423--456.

\end{thebibliography}


\end{document}